\documentclass[11pt,reqno]{amsart}
\usepackage{amsmath, amsfonts, amsthm, amssymb,amscd, graphicx, amscd}
\usepackage{float,epsf}
\usepackage[english]{babel}
\usepackage{enumerate}
\usepackage{tikz}
\usepackage{mathrsfs}
\usepackage[numbers,sort&compress]{natbib}

\usepackage{srcltx}
\usepackage{geometry}
\usepackage{verbatim}
\usepackage{mathrsfs}
\usepackage{hyperref}
\usepackage{enumitem} 
\usepackage{bbm} 
\usepackage{stmaryrd}

\usepackage{relsize}
\usepackage{exscale}

\usepackage{mathtools}

\newtheorem{thm}{Theorem}[section]
\newtheorem{defn}[thm]{Definition}

\newtheorem{lem}[thm]{Lemma}
\newtheorem{cor}[thm]{Corollary}

\newtheorem{rem}[thm]{Remark}

\numberwithin{equation}{section}

\def\dd{{\rm d}}
\hypersetup{bookmarksdepth=2}

\begin{document}
\title{NONLOCAL-TO-LOCAL LIMIT FOR LINEAR TRANSPORT EQUATIONS WITH MEASURE INITIAL DATA}
\author{Immanuel Ben Porat}
\begin{abstract}
We initiate the study of the nonlocal to local limit for linear transport equations. A nonlocal version of the linear transport equation is introduced alongside an appropriate well-posedness theory for distributional solutions with measure initial data. We proceed by deriving the associated linear transport equation as a nonlocal-to-local limit. Some of the results apply in arbitrary dimension and for measure initial data.        
\end{abstract}
\maketitle
\section{Introduction}
The nonlocal-to-local limit seeks to derive a local transport equation from an associated  nonlocal equation. The local transport equation can be either linear or nonlinear, i.e. a Burgers type equation. The nonlocal associated PDE  is a delocalized version of the transport or Burgers equation, respectively. In this work, we study the nonlocal-to-local limit for linear transport equations. Let us start by introducing the nonlocal version of the linear transport equation in non-conservative form.  Given an integrable function  $\eta\in L^{1}(\mathbb{R}^{d})$ with $\int_{\mathbb{R}^{d}}\eta(x)\ \dd x=1$ and  $\varepsilon>0$ set $$\eta_{\varepsilon}(x)\coloneqq \frac{1}{\varepsilon^{d}}\eta(\frac{x}{\varepsilon}).$$ 
With this notation consider the following Cauchy problem 
\begin{align}
\begin{cases}
\begin{array}{lc}
\partial_{t}u_{\varepsilon}+B\cdot \nabla_{x}(\eta_{\varepsilon}\ast u_{\varepsilon})=0, \ (t,x)\in (0,\infty)\times \mathbb{R}^{d}\\
u_{\varepsilon}(0,x)=u^{0}, \ x\in \mathbb{R}^{d} .
\end{array}\end{cases}
\label{nonlocal Cauhy problem simplification}
\end{align}
Here $B:\mathbb{R}^{d}\rightarrow \mathbb{R}^{d}$ is a given vector field. Since $\eta_{\varepsilon}$ is an approximation of the identity, it converges weakly to a Dirac delta. Therefore, at least formally, we expect to recover the following \textbf{local linear transport equation} in the limit as $\varepsilon\rightarrow 0$: 
\begin{align}
\begin{cases}
\partial_{t}u+B\cdot \nabla_{x}u=0, \ (t,x)\in (0,\infty)\times \mathbb{R}^{d}\\
u(0,x)=u^{0}, \ x\in \mathbb{R}^{d}. 
\end{cases}  
\label{local Cauchy}
\end{align} 
The rigorous justification of this limit has been mostly studied in the context of the passage from nonlocal conservation laws to Burgers-type equations. The objective of the present work is to derive linear transport equations as nonlocal limits. Some of our methods are inspired by the existing literature on nonlocal-to-local limits for Burgers-type equations, and we therefore briefly review some of the main works on the subject.
\subsection{The nonlocal to local limit for Burgers type equations}  
Traditionally the nonlocal-to-local limit has been studied for nonlocal conservation laws, which frequently arise in traffic flows. More specifically, given a function $V:\mathbb{R}\rightarrow \mathbb{R}$ (often referred to as a velocity function) consider the scalar Cauchy problem 
\begin{align}
\begin{cases}
\begin{array}{lc}
\partial_{t}u_{\varepsilon}+\partial_{x}(V(\eta_{\varepsilon}\ast u_{\varepsilon})u_{\varepsilon})=0, \ (t,x)\in (0,\infty)\times \mathbb{R}\\
u_{\varepsilon}(0,x)=u^{0},\ x\in \mathbb{R}.
\end{array}
\end{cases}
\label{nonlocal conservation}
\end{align}
As $\varepsilon\rightarrow 0$ one expects to derive the following local nonlinear conservation law 
\begin{align}
\begin{cases}
\begin{array}{lc}
\partial_{t}u+\partial_{x}(V(u)u)=0, \ (t,x)\in (0,\infty)\times \mathbb{R}\\
u(0,x)=u^{0}, \ x\in \mathbb{R}. 
\end{array}\end{cases}   
\label{local Burgers}
\end{align}
Global and local well-posedness theory of the initial value problem \eqref{nonlocal conservation} and its various variants is well established   - see \cite{keimer2023discontinuous,blandin2016well, crippa2013existence, coclite2022existence,colombo2024multidimensional,keimer2017existence,keimer2018existence, coclite2025singular, keimer2019nonlocal}. Moreover, under appropriate assumption on $V$ and $\eta$ it is possible to show that \eqref{nonlocal conservation} verifies a maximum principle and hence $\left\Vert u_{\varepsilon}\right\Vert_{L^{\infty}([0,T]\times \mathbb{R})}\leq \left\Vert u^{0}\right\Vert_{L^{\infty}(\mathbb{R})}$. As a result, $u_{\varepsilon}$ admits a weak-$\ast$ limit  $u\in L^{\infty}(\mathbb{R}_{+}\times \mathbb{R})$ (up to a subsequence) by the Banach-Alaoglu theorem. However, this does not automatically imply that   $V(\eta_{\varepsilon}\ast u_{\varepsilon})u_{\varepsilon}\underset{\varepsilon\rightarrow 0}{\rightharpoonup}V(u)u$, because weak convergence is in general ill-behaved under products. This is one reason which does not allow to immediately identify $u$ as a weak distributional solution of \eqref{local Burgers}. Even after this obstacle has been surpassed, an additional difficulty is in showing that this distributional solution is in fact the unique entropy admissible solution for \eqref{local Burgers}. Identifying  the precise assumptions which need to be imposed on $\eta,V$ and $u^{0}$ for this convergence to be valid is a main challenge in the nonlocal-to-local limit problem. 
Among several important works in recent years, which include both positive results as well as counterexamples to the nonlocal-to-local limit we mention \cite{colombo2019singular, colombo2021local,coclite2024oleinik, friedrich2024conservation, coclite2025singular,de2026volterra, coclite2022general}. See also \cite{colombo2023overview} for a general overview of the state of the art of the problem. 
\subsection{The nonlocal to local limit for linear transport equations.} The main aim of this work is to study the nonlocal-to-local limit leading from \eqref{nonlocal Cauhy problem simplification} to \eqref{local Cauchy}. Naturally, this also leads us to develop an appropriate well-posedness theory for signed distributional solutions of \eqref{nonlocal Cauhy problem simplification} - this will be the content of section \ref{Well posedness sec}. As already mentioned the literature on the non-local to local limit for linear transport equations appears to be limited, although we mention \cite{coclite2026nonlocal} which provides a derivation of a one-dimensional linear transport equation coupled to a Burgers equation. See also the recent \cite{gong2026existence} which focuses on the existence and uniqueness of generalizations of \eqref{nonlocal conservation}, which in particular fall in the scope of equations of the type \eqref{nonlocal Cauhy problem simplification}.   The novelty of our work is that we manage to prove the nonlocal-to-local limit for transport equations under assumptions which are significantly more general than what is  available in the aforementioned works. In particular our results generalize the nonlocal -to-local limit from several  viewpoints:  
\begin{enumerate}
    \item Dimensional: Our first two main results offer a derivation valid for arbitrary dimensions. This is at variance with the previously mentioned literature, which utilizes genuinely one-dimensional arguments. The only exception is \cite{coclite2025singular}, which studies the nonlocal-to-local limit for viscous conservation laws, leveraging the regularizing effect of the viscosity term. When no viscosity is included, the nonlocal-to-local limit in dimension higher than $1$ is largely open.   
    \item Measure initial data: The derivations offered by Theorems \ref{Constant transport limit}-\ref{nonlocal to local general B} are valid for distributional solutions of \eqref{local Cauchy} with measure initial data. 
    \item General kernels: At least in the case  where \eqref{local Cauchy} takes the form of a constant transport equation (Theorem \ref{Constant transport limit})), we cover a fairly general class of approximations of identities. This is in contrast to anisotropy assumptions imposed on the kernel in the aforementioned works, or the symmetry assumptions imposed in \cite{zumbrun1999nonlocal} and \cite{ghoshal2024non}. However, in Theorem \ref{Free transport limit} we need to impose a symmetry assumption and in Theorem \ref{nonlocal to local general B} we need the kernel to be anisotropic.    
    \item Signed initial data: The assumption that the initial data has a distinguished sign (which is preserved in time) is crucial in many works on the nonlocal-to-local limit for Burgers type equations. This assumption is redundant in Theorems \ref{Constant transport limit}-\ref{Free transport limit}, but not in Theorem \ref{nonlocal to local general B}. 
\end{enumerate}
Our first two main results concern the nonlocal-to-local limit for the two most common models of linear transport, namely the \textbf{constant transport  equation} and \textbf{free transport/Liouville equation}.  
\begin{thm}\textup{(\textbf{Nonlocal to local limit for constant transport})}
\\ Suppose that: 
\begin{itemize}
    \item $u^{0}\in H^{-s}(\mathbb{R}^{d})\cap \mathcal{M}(\mathbb{R}^{d})$ for some $s>\frac{d}{2}$. 
    \item $\eta\in \mathrm{BV}(\mathbb{R}^{d})$ is such that $\int_{\mathbb{R}^{d}}\eta(x)\ \dd x=1$. Moreover at least one of the following holds:
    \begin{itemize}
        \item     For all $\xi\in \mathbb{R}^{d}$ it holds that $\Re(\widehat{\eta}(\xi))\neq 0$.
        \item $\Im(\eta)\equiv 0$. 
    \end{itemize}
    \item $B=(B_{1},\dots,B_{d})\in \mathbb{R}^{d}$ is a constant vector. 
\end{itemize}
Let $u_{\varepsilon}$ be the unique distributional  solution of \eqref{nonlocal Cauhy problem simplification} with initial data $u^{0}$ (as guaranteed via Theorem \ref{measure valued solutions existence and uniqueness}) and let $u$ be the unique distributional solution of \eqref{local Cauchy} with initial data $u^{0}$. Then, for any $T>0$ it holds that 
\begin{align*}
\underset{t\in [0,T]}{\sup}\left\Vert (u_{\varepsilon}-u)(t,\cdot)\right\Vert_{H^{-s}}\underset{\varepsilon \rightarrow 0}{\rightarrow} 0.    
\end{align*}
\label{Constant transport limit}
\end{thm}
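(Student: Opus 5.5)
Since $B$ is constant, both \eqref{nonlocal Cauhy problem simplification} and \eqref{local Cauchy} are Fourier multiplier equations, so the plan is to solve them explicitly in Fourier variables and compare the two symbols. Taking the Fourier transform in $x$ (legitimate because $u^{0}\in\mathcal{M}(\mathbb{R}^{d})$ has bounded continuous transform and the distributional solutions are tempered), the nonlocal equation becomes $\partial_{t}\widehat{u_{\varepsilon}}(t,\xi)+i(B\cdot\xi)\widehat{\eta}(\varepsilon\xi)\widehat{u_{\varepsilon}}(t,\xi)=0$. Hence
\begin{align*}
\widehat{u_{\varepsilon}}(t,\xi)=e^{-it(B\cdot\xi)\widehat{\eta}(\varepsilon\xi)}\widehat{u^{0}}(\xi),\qquad \widehat{u}(t,\xi)=e^{-it(B\cdot\xi)}\widehat{u^{0}}(\xi).
\end{align*}
By the uniqueness part of Theorem \ref{measure valued solutions existence and uniqueness} (and uniqueness for \eqref{local Cauchy}), these explicit formulas are the solutions $u_{\varepsilon}$ and $u$ of the statement. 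Consequently
\begin{align*}
\left\Vert (u_{\varepsilon}-u)(t)\right\Vert_{H^{-s}}^{2}=\int_{\mathbb{R}^{d}}\big|e^{-it(B\cdot\xi)\widehat{\eta}(\varepsilon\xi)}-e^{-it(B\cdot\xi)}\big|^{2}\,\frac{|\widehat{u^{0}}(\xi)|^{2}}{(1+|\xi|^{2})^{s}}\,\dd\xi .
\end{align*}

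The argument would then be dominated convergence, uniform in $t\in[0,T]$. There are two ingredients.

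\textbf{Pointwise convergence.} Since $\eta\in L^{1}$ with unit mass, $\widehat{\eta}$ is continuous and $\widehat{\eta}(0)=1$. Thus $\widehat{\eta}(\varepsilon\xi)\to 1$ for each fixed $\xi$, and the integrand tends to $0$ uniformly for $t\in[0,T]$. The uniformity follows from the bound $|e^{-ia}-e^{-ib}|\le |a-b|\,e^{\max(\Im a,\Im b)_+}$ together with $|t(B\cdot\xi)(\widehat{\eta}(\varepsilon\xi)-1)|\le T|B||\xi|\,|\widehat{\eta}(\varepsilon\xi)-1|\to0$.

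\textbf{Uniform domination.} This is the main obstacle. The factor $\big|e^{-it(B\cdot\xi)\widehat{\eta}(\varepsilon\xi)}\big|=e^{t(B\cdot\xi)\Im\widehat{\eta}(\varepsilon\xi)}$ must be bounded independently of $\varepsilon$, $t\le T$ and $\xi$, because $(B\cdot\xi)$ is unbounded. The two alternatives in the hypothesis are meant to control exactly this.

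In the case where $\eta$ is real and even, $\widehat{\eta}$ is real, so the factor has modulus $1$. For a general real $\eta$ one only has $\Im\widehat{\eta}(\zeta)=-\Im\widehat{\eta}(-\zeta)$, and I expect the paper's well-posedness theory to impose a sign or boundedness structure that I would need to check here. The BV assumption should enter through the decay $|\widehat{\eta}(\zeta)|\le C|D\eta|(\mathbb{R}^{d})/|\zeta|$. This gives $|(B\cdot\xi)\,\widehat{\eta}(\varepsilon\xi)|\le C|B|/\varepsilon$ for large $|\xi|$, so the exponent is controlled at high frequency for each fixed $\varepsilon$. That bound alone is not uniform in $\varepsilon$, however. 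The natural way to get uniformity is to show $(B\cdot\xi)\Im\widehat{\eta}(\varepsilon\xi)\le 0$, or bounded above uniformly, using the stated hypothesis on $\Re\widehat{\eta}$ or the reality of $\eta$. I would first try to extract precisely this from the proof of Theorem \ref{measure valued solutions existence and uniqueness}, where the same multiplier must be bounded to obtain $H^{-s}$ solutions.

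Once $|e^{\cdots}|\le C_{T}$ is established, the integrand is bounded by $(C_{T}+1)^{2}\|u^{0}\|_{\mathcal{M}}^{2}(1+|\xi|^{2})^{-s}$, which is integrable since $s>d/2$ and $|\widehat{u^{0}}|\le\|u^{0}\|_{\mathcal{M}}$. Dominated convergence then gives the claim. To make the convergence uniform in $t$, I would split the integral into $|\xi|\le R$, where the pointwise convergence is uniform in $t$, and $|\xi|>R$, where the tail is small uniformly in $t$ and $\varepsilon$.
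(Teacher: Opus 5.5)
Your Fourier reduction and the pointwise convergence are fine. Solving $\partial_t\widehat{u}_\varepsilon=-i(B\cdot\xi)\widehat{\eta}(\varepsilon\xi)\widehat{u}_\varepsilon$ as an ODE in $t$ for each fixed $\xi$ and then invoking uniqueness is cleaner than the paper's route through Lemma \ref{Uniqueness lemma}, and needs no nondegeneracy condition.

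However, the step you flagged as the main obstacle is a genuine gap, and it cannot be closed under the stated hypotheses. By the scaling $\zeta=\varepsilon\xi$ one has $\sup_{\xi}(B\cdot\xi)\Im\widehat{\eta}(\varepsilon\xi)=\varepsilon^{-1}\sup_{\zeta}(B\cdot\zeta)\Im\widehat{\eta}(\zeta)$. So an upper bound uniform in $\varepsilon$ is \emph{equivalent} to the sign condition $(B\cdot\zeta)\Im\widehat{\eta}(\zeta)\le 0$ for all $\zeta$; ``bounded above uniformly'' is not a weaker option. Neither hypothesis on $\eta$ yields this sign condition. The well-posedness theory cannot supply it either: the only bound in the proof of Theorem \ref{measure valued solutions existence and uniqueness} is $e^{t|B|\,\|\eta_\varepsilon\|_{\mathrm{TV}}}$, which grows like $e^{Ct/\varepsilon}$.

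In fact the conclusion fails. In $d=1$ take $B=1$, $u^0=\delta_0$, and $\eta(x)=e^{x}$ for $x<0$, $\eta(x)=0$ for $x\ge 0$. This $\eta$ is real, lies in $\mathrm{BV}$, has unit mass, and $\widehat{\eta}(\zeta)=(1-i\zeta)^{-1}$ has $\Re\widehat{\eta}>0$, so both alternatives hold. With $w=\eta_\varepsilon\ast u_\varepsilon$ one checks $u_\varepsilon=w-\varepsilon\partial_x w$. Hence $\partial_t\widehat{u}_\varepsilon=\frac{-i\xi+\varepsilon\xi^2}{1+\varepsilon^2\xi^2}\,\widehat{u}_\varepsilon$, and $|\widehat{u}_\varepsilon(t,\xi)|\ge e^{t/(2\varepsilon)}$ for $|\xi|\ge 1/\varepsilon$. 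Therefore $\|u_\varepsilon(t)\|_{H^{-s}}^2\gtrsim e^{t/\varepsilon}\varepsilon^{2s-1}\to\infty$ for every $t>0$, whereas $\|u(t)\|_{H^{-s}}=\|\delta_0\|_{H^{-s}}$ stays fixed. The mechanism is that the kernel averages $u$ downstream of the flow, i.e.\ this is a downwind scheme.

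Your argument does go through under the extra condition $(B\cdot\zeta)\Im\widehat{\eta}(\zeta)\le 0$ for all $\zeta$, in particular for even $\eta$, where $\widehat{\eta}$ is real. Then the integrand is dominated by $4|\widehat{u^0}|^2(1+|\xi|^2)^{-s}$, and your dominated convergence argument (with $\sup_{t\in[0,T]}$ inside) closes.

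Be aware that the paper's own proof has the same hole. It applies dominated convergence citing only the integrability of $|\widehat{u}^0|^2(1+|\xi|^2)^{-s}$ and the pointwise convergence, and never bounds $|e^{-itB\cdot\xi\,\widehat{\eta}(\pm\varepsilon\xi)}|$. Its two alternative hypotheses are used only to check the nondegeneracy condition of Lemma \ref{Uniqueness lemma}, i.e.\ to identify $\widehat{u}_\varepsilon$, not to dominate it.
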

\begin{thm}\textup{(\textbf{Nonlocal to local limit for free transport})}\\
Suppose that: 
\begin{itemize}
    \item $u^{0}\in H^{-s}(\mathbb{R}^{d}_{x}\times \mathbb{R}^{d}_{v})\cap \mathcal{M}_{c}(\mathbb{R}^{d}_{x}\times \mathbb{R}^{d}_{v})$ for some $s>1+\frac{d}{2}$. Moreover, suppose that $\widehat{u}^{0}(\xi,\zeta)\neq 0$ for all $(\xi,\zeta)$. 
    \item $\eta\in \mathcal{S}(\mathbb{R}^{d}_{x}\times \mathbb{R}^{d}_{v})$ is even \textup{(}$\eta(x,v)=\eta(-x,-v
    )$\textup{)} and $\int_{\mathbb{R}^{d}\times \mathbb{R}^{d}}\eta(x,v)\ \dd x\dd v=1$. 
    \item $B(x,v)=(v,\mathbf{0})=(v_{1},\dots,v_{d},0,\dots,0)$. 
\end{itemize}
Let $u_{\varepsilon}$ be the unique distributional solution of \eqref{nonlocal Cauhy problem simplification} with initial data $u^{0}$ (as guaranteed via Theorem \ref{Measure valued solution to Liouville eq}) and let $u$ be the unique distributional solution of \eqref{local Cauchy} with initial data $u^{0}$. Then, for any $T>0$ it holds that 
\begin{align*}
\underset{t\in [0,T]}{\sup}\left\Vert (u_{\varepsilon}-u)(t,\cdot)\right\Vert_{H^{-s}}\underset{\varepsilon \rightarrow 0}{\rightarrow} 0.   
\end{align*}
\label{Free transport limit}
\end{thm}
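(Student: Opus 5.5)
The plan is to work entirely on the Fourier side, where both equations become first-order transport equations in the dual velocity variable $\zeta$ with $\xi$ as a frozen parameter.

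\textbf{Step 1: the Fourier equations.} Write $\widehat{\cdot}$ for the Fourier transform in $(x,v)\mapsto(\xi,\zeta)$. Multiplication by $v_j$ becomes $i\partial_{\zeta_j}$, and $\partial_{x_j}$ becomes multiplication by $i\xi_j$. Hence \eqref{nonlocal Cauhy problem simplification} with $B=(v,\mathbf 0)$ becomes, up to a harmless sign convention,
\begin{align*}
\partial_t \widehat{u}_\varepsilon=\xi\cdot\nabla_\zeta\big(\widehat{\eta}(\varepsilon\xi,\varepsilon\zeta)\,\widehat{u}_\varepsilon\big).
\end{align*}
Similarly, \eqref{local Cauchy} becomes $\partial_t\widehat u=\xi\cdot\nabla_\zeta\widehat u$, whose solution is $\widehat u(t,\xi,\zeta)=\widehat{u}^0(\xi,\zeta+t\xi)$. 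I expect this to coincide with the solution furnished by Theorem \ref{Measure valued solution to Liouville eq} and its local analogue; I would check this identification rather than re-derive uniqueness. Since $\eta$ is even and real, $\widehat\eta$ is real-valued, Schwartz, and satisfies $\widehat\eta(0)=1$. Since $u^0\in\mathcal M_c$, $\widehat u^0$ is smooth with all derivatives bounded, and in particular it is globally Lipschitz.

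\textbf{Step 2: representation of $\widehat{u}_\varepsilon$.} Set $w_\varepsilon\coloneqq\widehat\eta_\varepsilon\widehat u_\varepsilon$. It satisfies the nonconservative equation $\partial_t w_\varepsilon=\widehat\eta(\varepsilon\xi,\varepsilon\zeta)\,\xi\cdot\nabla_\zeta w_\varepsilon$. Consequently $w_\varepsilon$ is constant along the real characteristics
\begin{align*}
\dot\zeta=-\widehat\eta(\varepsilon\xi,\varepsilon\zeta)\,\xi,
\end{align*}
which move along the line $\zeta+\mathbb R\xi$. Equivalently, $\widehat u_\varepsilon$ solves a one-dimensional continuity equation along each such line. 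This gives the formula
\begin{align*}
\widehat u_\varepsilon(t,\xi,\zeta)=\widehat u^0(\xi,Z_\varepsilon)\,\partial_\zeta Z_\varepsilon,
\end{align*}
where $Z_\varepsilon=Z_\varepsilon(t;\xi,\zeta)$ is the backward characteristic foot. The Jacobian $\partial_\zeta Z_\varepsilon$ is expressed as $\exp\!\big(\int_0^t\varepsilon\,\xi\cdot\nabla\widehat\eta\,\dd\tau\big)$. I expect the hypothesis $\widehat u^0\neq0$ to enter here, or in the well-posedness theorem, to justify dividing by the transported quantity.

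On any compact set $K$ of frequencies we have $\widehat\eta(\varepsilon\cdot)\to1$ and $\varepsilon\nabla\widehat\eta(\varepsilon\cdot)\to0$ uniformly. Gronwall along the characteristics then gives $Z_\varepsilon\to\zeta+t\xi$ and $\partial_\zeta Z_\varepsilon\to1$, uniformly for $t\in[0,T]$ and $(\xi,\zeta)\in K$. Using the Lipschitz bound on $\widehat u^0$, we conclude
\begin{align*}
\sup_{t\le T}\ \sup_K\,|\widehat u_\varepsilon-\widehat u|\to0.
\end{align*}

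\textbf{Step 3: $H^{-s}$ convergence.} Split
\begin{align*}
\|(u_\varepsilon-u)(t)\|_{H^{-s}}^2=\int_{|(\xi,\zeta)|\le R}+\int_{|(\xi,\zeta)|>R}|\widehat u_\varepsilon-\widehat u|^2\langle(\xi,\zeta)\rangle^{-2s}.
\end{align*}
The first integral vanishes as $\varepsilon\to0$ by Step 2. For the tail, we need a bound $|\widehat u_\varepsilon(t,\xi,\zeta)|\le C_T\langle(\xi,\zeta)\rangle$ that is uniform in $\varepsilon\le1$. The weight $\langle\cdot\rangle^{-2s}$ with $2s-2>2d$ is exactly what makes $\langle\cdot\rangle^{2-2s}$ integrable in dimension $2d$. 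With such a bound, the tail is $O(R^{2d+2-2s})$ uniformly in $\varepsilon$ and $t$, and letting $\varepsilon\to0$ and then $R\to\infty$ concludes the proof. The bound $|\widehat u|\le\|u^0\|_{TV}$ handles the local solution trivially.

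\textbf{Main obstacle.} The main obstacle is the uniform high-frequency bound on $\widehat u_\varepsilon$. The crude Jacobian estimate yields $\exp(CT\varepsilon|\xi|)$, which is not uniform. The first thing I would try is to exploit the conservative structure along each line $\zeta+\mathbb R\xi$. The Jacobian equals a ratio $\widehat\eta_\varepsilon(\text{foot})/\widehat\eta_\varepsilon(\zeta)$ up to the time reparametrisation, so the growth is governed by how far the characteristic travels. That distance is at most $T|\xi|\,\|\widehat\eta\|_\infty$, which combined with the smoothness of $\widehat u^0$ should yield a polynomial bound of degree one. This is precisely why one extra power beyond $d/2$ appears in the assumption $s>1+\frac d2$. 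Alternatively, one could test the equation against $\langle v\rangle$-weighted functions in physical space, using the compact support and the finite speed of propagation in $x$. I would attempt the characteristic-based bound first.
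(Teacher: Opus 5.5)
There is a genuine gap, and it is the step you yourself flag as the main obstacle. Step 3 needs a bound on $\widehat{u}_{\varepsilon}(t,\cdot)$ at high frequencies that is uniform in $\varepsilon$. Without it, the local uniform convergence of Step 2 gives no control of $\sup_{t\le T}\Vert (u_{\varepsilon}-u)(t,\cdot)\Vert_{H^{-s}}$, and the proposal does not prove such a bound. Neither suggested route works as stated:
\begin{itemize}
\item The smoothness of $\widehat{u}^{0}$ has no bearing on the Jacobian. Bounding the ratio $\widehat{\eta}_{\varepsilon}(\text{foot})/\widehat{\eta}_{\varepsilon}(\zeta)$ through the travelled distance, which is of order $T\vert\xi\vert$, just reproduces the non-uniform factor $\exp(CT\varepsilon\vert\xi\vert)$. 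The ratio is also undefined where $\widehat{\eta}_{\varepsilon}$ vanishes.
\item In physical space there is no finite speed of propagation, because a Schwartz kernel spreads the support instantly. Moreover $B=(v,\mathbf{0})$ is unbounded, so the $L^{1}$ estimate from the proof of Theorem \ref{measure valued solutions existence and uniqueness} is not available.
\end{itemize}

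The missing idea is that your ``crude'' estimate throws away the decay of $\nabla\widehat{\eta}$. Since $\widehat{\eta}\in\mathcal{S}$,
\[
M\coloneqq\sup_{(y,z)\in\mathbb{R}^{d}\times\mathbb{R}^{d}}\left\vert y\cdot\nabla_{z}\widehat{\eta}(y,z)\right\vert<\infty .
\]
Substituting $(y,z)=\varepsilon(\xi,\zeta)$ shows that the integrand $\varepsilon\xi\cdot(\nabla_{\zeta}\widehat{\eta})(\varepsilon\xi,\varepsilon\zeta)$ in your Jacobian is bounded by $M$ for every $\varepsilon$. This is the same scaling identity the paper uses to bound $I_{1}^{1}$ and $I_{3}^{1}$. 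It gives $e^{-MT}\le\partial_{\zeta}Z_{\varepsilon}\le e^{MT}$, and hence $\vert\widehat{u}_{\varepsilon}(t,\xi,\zeta)\vert\le e^{MT}\Vert\widehat{u}^{0}\Vert_{\infty}$. This is a degree-zero bound, uniform in $\varepsilon$, $t\le T$ and $(\xi,\zeta)$, and $\widehat{u}^{0}\neq0$ plays no role in it.

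With this bound your argument closes, and it is then a genuinely different proof from the paper's. The paper never integrates along characteristics. It runs a weighted $L^{2}_{-s}$ energy and Gr\"onwall estimate on $\widehat{u}_{\varepsilon}-\widehat{u}$. There the commutator term $I_{2}$ is handled by the Lipschitz bound of Lemma \ref{bound on negative weighted fourier} plus dominated convergence, and $I_{3}^{2}$ by its factor $\varepsilon^{2}$.

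Your bookkeeping of $s$ also needs correcting. The condition $2s-2>2d$ means $s>d+1$, not $s>1+\frac{d}{2}$. With the uniform bound you only need $w_{-s}\in L^{1}(\mathbb{R}^{2d})$, i.e.\ $s>d$. For $d\ge3$ the literal hypothesis does not give $s>d$; to reach it, change variables $\zeta\mapsto Z_{\varepsilon}$ in the tail integral:
\begin{itemize}
\item The Jacobian lies in $[e^{-MT},e^{MT}]$ and the displacement is at most $T\vert\xi\vert\Vert\widehat{\eta}\Vert_{\infty}$.
\item Hence $w_{-s}(\xi,\zeta)\lesssim_{T}w_{-s}(\xi,Z_{\varepsilon})$ and $\vert(\xi,Z_{\varepsilon})\vert\gtrsim_{T}\vert(\xi,\zeta)\vert$.
\item So the tail of $\vert\widehat{u}_{\varepsilon}\vert^{2}w_{-s}$ is dominated by a tail of $\vert\widehat{u}^{0}\vert^{2}w_{-s}$, uniformly in $\varepsilon$ and $t$, using only $u^{0}\in H^{-s}$.
\end{itemize}
The paper's own bounds on $I_{2}$ and $I_{3}^{2}$ rely on $w_{-(s-1)}\in L^{1}(\mathbb{R}^{d}\times\mathbb{R}^{d})$, which likewise requires $s>d+1$.
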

A few remarks are in order after the statement of Theorems \ref{Constant transport limit}-\ref{Free transport limit}. The reason we focus on these specific equations (where $B$ is either constant or linear) is related to the fact that in both cases the Fourier transform of the solution is governed by an explicit linear transport equation, a fact which will be of crucial importance both in the passage to the limit as well as in certain parts of the well-posedness theory. It is a curious question whether it is possible to extend the results to more general $B$. The requirement that $\eta$ is even in Theorem \ref{Free transport limit} is to ensure that the Fourier transform of $\eta$ is real-valued, which will be used to study the linear transport equation governing the Fourier transform of the solution. It should also be noted that we expect that if more regularity is available for the initial data $u^{0}$, then it should be possible to improve the mode of convergence from weak convergence to strong convergence.
Finally, we are able to treat arbitrary non-positive $B$ in the 1D case when $\eta$ is anisotropic and the initial data is a non-negative measure. Precisely, we have the following result. 
\begin{thm} \textup{(\textbf{Nonlocal to local limit for arbitrary $B\leq 0$})}\\
Suppose that: 
\begin{itemize}
    \item $u^{0}\in H^{-s}(\mathbb{R})\cap \mathcal{M}_{+}(\mathbb{R})$ for some $s>\frac{1}{2}$. 
    \item $B\in W^{s,\infty}(\mathbb{R})\cap C^{\infty}(\mathbb{R})$ and $B(x)\leq 0$ for all $x\in \mathbb{R}$. 
    \item $\eta\in L^{1}(\mathbb{R})\cap L^{\infty}(\mathbb{R})$, $\mathrm{supp}(\eta)\subset \mathbb{R}_{-}$, $\eta$ is non-decreasing on $\mathbb{R}_{-}$ and $\eta\geq 0$ satisfies $\int_{\mathbb{R}}\eta(x)\ \dd x=1$ (in particular, $\eta\in \mathrm{BV}(\mathbb{R})$).  
\end{itemize}
Let $u_{\varepsilon}$ be the unique distributional  solution to \eqref{nonlocal Cauhy problem simplification} with initial data $u^{0}$ (as guaranteed via Theorem \ref{measure valued solutions existence and uniqueness}) and let $u$ be the unique distributional solution of \eqref{local Cauchy} with initial data $u^{0}$. 
Then, for any $T>0$ it holds that 
\begin{align*}
\left\Vert (u_{\varepsilon}-u)(t,\cdot)\right\Vert_{L^{2}([0,T];H^{-s}(\mathbb{R}))}\underset{\varepsilon \rightarrow 0}{\rightarrow}0.  
\end{align*}
\label{nonlocal to local general B}
\end{thm}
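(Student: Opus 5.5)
The plan is a compactness argument: obtain uniform bounds on $u_{\varepsilon}$ in the space of nonnegative measures, extract a narrowly converging subsequence, identify the limit as the unique distributional solution of \eqref{local Cauchy}, and then upgrade narrow convergence to convergence in $L^{2}([0,T];H^{-s}(\mathbb{R}))$ using $s>\frac{1}{2}$. Since the limit is unique, the whole family then converges.

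\textbf{Step 1 (positivity and mass control).} First I would show that $u_{\varepsilon}(t)\in\mathcal{M}_{+}(\mathbb{R})$ and $\sup_{t\in[0,T]}|u_{\varepsilon}(t)|(\mathbb{R})\le C_{T}|u^{0}|(\mathbb{R})$, with $C_{T}$ independent of $\varepsilon$. This is exactly where the structural assumptions enter.

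The assumptions are $\mathrm{supp}(\eta)\subset\mathbb{R}_{-}$, $\eta$ nondecreasing and $\eta\ge 0$. Under them, $D\eta_{\varepsilon}=\mu_{\varepsilon}-\eta_{\varepsilon}(0^{-})\delta_{0}$ with $\mu_{\varepsilon}\ge 0$ supported in $\mathbb{R}_{-}$. Hence, with $w_{\varepsilon}=\eta_{\varepsilon}\ast u_{\varepsilon}$,
\begin{align*}
\partial_{t}u_{\varepsilon}=-B\,\partial_{x}w_{\varepsilon}=B\,\eta_{\varepsilon}(0^{-})\,u_{\varepsilon}-B\,(\mu_{\varepsilon}\ast u_{\varepsilon}).
\end{align*}
Because $B\le 0$, the first term is a nonpositive multiple of $u_{\varepsilon}$, so it is a pure damping term. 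The second term $-B(\mu_{\varepsilon}\ast u_{\varepsilon})$ is a nonnegative operator applied to $u_{\varepsilon}$. Writing the solution via Duhamel with the integrating factor $\exp\bigl(\int_{0}^{t}B\eta_{\varepsilon}(0^{-})\bigr)$ therefore shows that $u_{\varepsilon}$ is a limit of Picard iterates preserving $\mathcal{M}_{+}$. This identification requires the uniqueness part of Theorem \ref{measure valued solutions existence and uniqueness}.

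For the mass bound, integrate the equation in conservative form:
\begin{align*}
\partial_{t}u_{\varepsilon}+\partial_{x}(Bw_{\varepsilon})=B'w_{\varepsilon}.
\end{align*}
Testing against $1$ then gives $\frac{\dd}{\dd t}u_{\varepsilon}(t)(\mathbb{R})\le\|B'\|_{\infty}\,u_{\varepsilon}(t)(\mathbb{R})$, which is uniform in $\varepsilon$. I expect this to be the main obstacle. The two terms above are individually of size $\varepsilon^{-1}$, and only their combination, seen through the conservative form, is $O(1)$. Making the positivity argument rigorous at the level of distributional measure solutions (rather than smooth ones) will need care, and I would first try it on the mollified data and pass to the limit.

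\textbf{Step 2 (equicontinuity and tightness).} From the conservative form, $|\langle\partial_{t}u_{\varepsilon},\varphi\rangle|\le C(\|\varphi'\|_{\infty}+\|\varphi\|_{\infty})$, so $t\mapsto u_{\varepsilon}(t)$ is equi-Lipschitz in a dual $W^{1,\infty}$ norm. The transport speed is bounded by $\|B\|_{\infty}$ and the kernel has range $O(\varepsilon)$, so testing with cutoffs shows that tightness of $u^{0}$ propagates uniformly in $\varepsilon$. An Arzel\`a--Ascoli argument then yields a subsequence with $u_{\varepsilon}(t)\to u(t)$ narrowly, uniformly in $t\in[0,T]$.

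\textbf{Step 3 (identification of the limit).} Write $B\partial_{x}(\eta_{\varepsilon}\ast u_{\varepsilon})=\partial_{x}(B w_{\varepsilon})-B'w_{\varepsilon}$. For a test function $\varphi$,
\begin{align*}
\langle w_{\varepsilon},\psi\rangle=\langle u_{\varepsilon},\check{\eta}_{\varepsilon}\ast\psi\rangle,
\end{align*}
and $\|\check{\eta}_{\varepsilon}\ast\psi-\psi\|_{\infty}\to 0$ for $\psi\in C^{1}_{c}$. Combined with the uniform mass bound, this lets each term pass to the limit. The limit $u$ is therefore a distributional solution of \eqref{local Cauchy} with initial data $u^{0}$, and by uniqueness the entire family converges.

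\textbf{Step 4 (upgrade to $L^{2}([0,T];H^{-s})$).} For a finite measure $\nu$ and $s>\frac{1}{2}$,
\begin{align*}
\|\nu\|_{H^{-s}}^{2}=\iint K_{s}(x-y)\,\dd\nu(x)\,\dd\nu(y),
\end{align*}
where $K_{s}$, the inverse Fourier transform of $(1+|\xi|^{2})^{-s}$, is bounded and continuous. Narrow convergence of $u_{\varepsilon}(t)-u(t)\to 0$, together with tightness and the uniform mass bound, gives $\|(u_{\varepsilon}-u)(t)\|_{H^{-s}}\to 0$ for each $t$. The uniform bound $\|(u_{\varepsilon}-u)(t)\|_{H^{-s}}^{2}\le\|K_{s}\|_{\infty}(2C_{T})^{2}$ then allows dominated convergence in $t$, yielding the claim.
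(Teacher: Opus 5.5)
Your proposal is correct, and it takes a genuinely different route from the paper. The difference has consequences for what gets proved.

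\textbf{Positivity.} The paper (Lemma~\ref{Positivity invariant}) runs a maximum principle on $|u_\varepsilon|-u_\varepsilon$ for $W^{1,\infty}$ data and a regularized kernel. It then reaches measure data through mollification, Remark~\ref{Remark about approximating measure valued solutions}, and the Schwartz representation of positive distributions. Your splitting $D\eta_\varepsilon=\mu_\varepsilon-\eta_\varepsilon(0^-)\delta_0$ works directly on measures:
\begin{itemize}
\item for fixed $\varepsilon$, the operator $L_\varepsilon u=-B\,\partial_x(\eta_\varepsilon\ast u)$ is bounded on $\mathcal{M}(\mathbb{R})$, with norm at most $2\|B\|_\infty\|\eta\|_\infty/\varepsilon$;
\item $L_\varepsilon+\|B\|_\infty\eta_\varepsilon(0^-)$ is a positive operator, so $u_\varepsilon=e^{tL_\varepsilon}u^0\in C^1([0,T];\mathcal{M}_+(\mathbb{R}))$;
\item this coincides with the distributional solution by Theorem~\ref{measure valued solutions existence and uniqueness}.
\end{itemize}
So the step you flag as the main obstacle is not really one. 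The $\varepsilon^{-1}$ sizes matter only for uniform bounds, and those come from the conservative form, exactly as in \eqref{bound on L1 norm of delta regularization}.

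\textbf{Compactness.} The paper uses only the uniform mass bound, the embedding $\mathcal{M}\hookrightarrow H^{-s}$ and Lemma~\ref{BanachAl Lemma}. It identifies the limit by testing and concludes by uniqueness. This is shorter, but it yields only $u_\varepsilon\rightharpoonup u$ \emph{weakly} in $L^2([0,T];H^{-s}(\mathbb{R}))$, not the norm convergence written in the statement. Your Steps~2 and~4 are what supply the strong convergence: time equicontinuity, tightness and narrow convergence, then the bounded Bessel kernel $K_s$ with dominated convergence. Arzel\`a--Ascoli gives narrow convergence uniformly in $t$, and $|\widehat{\nu}(\xi)|\le 2(1+|\xi|)\|\nu\|_{(W^{1,\infty})^{\ast}}$. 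Together these even give $\sup_{t\in[0,T]}\|(u_\varepsilon-u)(t,\cdot)\|_{H^{-s}}\to 0$.

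\textbf{Two details to fix.}
\begin{itemize}
\item \emph{Tightness.} $\eta$ need not have compact support (take $\eta(x)=e^{x}$ for $x<0$ and $\eta=0$ otherwise), so ``range $O(\varepsilon)$'' is not literally available. Take a cutoff $\psi_R=\phi(\cdot/R)$ with $0\le\phi\le 1$, $\phi=0$ on $[-1,1]$ and $\phi=1$ outside $[-2,2]$. Use $\psi_R(y+z)\le\psi_R(y)+\min(1,\|\phi'\|_\infty|z|/R)$. For $\varepsilon\le 1$ this gives $\langle w_\varepsilon,\psi_R\rangle\le\langle u_\varepsilon,\psi_R\rangle+u_\varepsilon(\mathbb{R})\int_{\mathbb{R}}\eta(z)\min(1,\|\phi'\|_\infty|z|/R)\,\dd z$. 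Gr\"onwall for $t\mapsto\langle u_\varepsilon(t),\psi_R\rangle$ then closes uniformly in $\varepsilon$.
\item \emph{Regularity of $B$.} Like the paper, you use $\|B'\|_\infty<\infty$. This requires $B$ to be Lipschitz, as in Definition~\ref{Defintion measure valued for nonlocal}; $W^{s,\infty}$ with $s<1$ does not give it by itself.
\end{itemize}
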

The paper is organized as follows. In section \ref{Well posedness sec} we provide the appropriate well-posedness theory for \eqref{nonlocal Cauhy problem simplification}. The methods involved for the case where $B$ is bounded differ substantially from the methods required to handle linear $B$ (as in Theorem \ref{Free transport limit}). The latter is based on Fourier-transforming the equation, a tool which is redundant when $B$ is bounded. In section \ref{Proof of main theorems sec} we further exploit the Fourier approach used in section \ref{Well posedness sec} in order to prove the nonlocal-to-local limit for the choices of $B$ indicated in Theorems \ref{Constant transport limit} and \ref{Free transport limit}. Finally, in section \ref{general B sec} we prove Theorem \ref{nonlocal to local general B}, which proves the nonlocal-to-local limit for the case of general non-positive $B$. This is achieved  in the settings of anisotropic kernels. \section{Well-posedness theory}\label{Well posedness sec}
In this section we prove the well-posedness of \eqref{nonlocal Cauhy problem simplification} both for Lipschitz bounded $B$, as well as for $B$ as in Theorem \ref{Free transport limit} (which has linear growth). We always take $\varepsilon=1$ in this section as it will not be relevant for the well-posedness theory.   
We denote by $\mathcal{M}(\mathbb{R}^{d})$ (resp. $\mathcal{M}_{+}(\mathbb{R}^{d})$) the space of signed finite Radon measures (resp. positive finite Radon measures). We denote by $\mathcal{M}_{c}(\mathbb{R}^{d})$ the space of finite Radon measures with compact support. Given $u\in \mathcal{M}({\mathbb{R}}^{d})$ denote by $\left\vert u\right\vert$ the total variation of $u$. In case we wish to emphasize a function $f$ is complex valued we will write $f\in L^{2}(\mathbb{R}^{d};\mathbb{C}),f\in \mathcal{S}(\mathbb{R}^{d};\mathbb{C}),\dots$ etc.  
\begin{defn}\textup{(\textbf{Distributional solutions of \eqref{local Cauchy}}}\\
Suppose that: 
\begin{itemize}
    \item $u^{0}\in H^{-s}(\mathbb{R}^{d})\cap \mathcal{M}(\mathbb{R}^{d})$. 
    \item $B\in \mathrm{Lip}(\mathbb{R}^{d})\cap C^{\infty}(\mathbb{R}^{d})$.
\end{itemize}
We call $u\in L^{\infty}([0,T];H^{-s}(\mathbb{R}^{d}))$ a distributional solution of \eqref{local Cauchy} with initial data $u^{0}$ if for any $\varphi\in C^{\infty}_{0}((-a,T)\times \mathbb{R}^{d})$ ($a>0$ arbitrarily small) it holds that 
\begin{align*}
\int_{0}^{T}\langle \partial_{\tau}\varphi(\tau,\cdot),u(\tau,\cdot)\rangle+\langle\mathrm{div}_{x}(B\varphi)(\tau,\cdot),u(\tau,\cdot)\rangle \ \dd \tau +\langle \varphi(0,\cdot),u^{0}\rangle =0.    
\end{align*}
\end{defn}
The existence and uniqueness of distributional solutions of \eqref{local Cauchy} with measure initial data is classical. Next, we define distributional solutions for the nonlocal equation \eqref{nonlocal Cauhy problem simplification}. Given $\eta\in L^{1}(\mathbb{R}^{d})$ denote by $\eta^{\mathrm{odd}}$ and $\eta^{\mathrm{even}}$ the odd and even parts of $\eta$, respectively.  Explicitly, $\eta^{\mathrm{odd}}$ and $\eta^{\mathrm{even}}$ are defined by  
\begin{align*}
\eta^{\mathrm{odd}}(x)\coloneqq\frac{\eta(x)-\eta(-x)}{2}  \ \mbox{and}\ \eta^{\mathrm{even}}(x)\coloneqq\frac{\eta(x)+\eta(-x)}{2}.     
\end{align*}
We also define $\overline{\eta}(x)\coloneqq \eta^{\mathrm{even}}(x)-\eta^{\mathrm{odd}}(x)=\eta(-x)$. 
\begin{defn}\label{Defintion measure valued for nonlocal}\textup{(\textbf{Distributional solutions of \eqref{nonlocal Cauhy problem simplification})}}\\
Suppose that: 
\begin{itemize}
    \item $u^{0}\in H^{-s}(\mathbb{R}^{d})\cap \mathcal{M}(\mathbb{R}^{d})$.
    \item $B\in \mathrm{Lip}(\mathbb{R}^{d})\cap C^{\infty}(\mathbb{R}^{d})$. 
    \item $\eta\in \mathrm{BV}(\mathbb{R}^{d})$.
\end{itemize}
We call $u\in L^{\infty}([0,T];H^{-s}(\mathbb{R}^{d}))$ a distributional solution of \eqref{nonlocal Cauhy problem simplification} with initial data $u^{0}$ if for any $\varphi\in C^{\infty}_{0}((-a,T)\times \mathbb{R}^{d})$  it holds that 
\begin{align}
&\int_{0}^{T}\langle\partial_{\tau}\varphi(\tau,\cdot),u(\tau,\cdot) \rangle+ \langle\mathrm{div}_{x}(\overline{\eta}\ast (B\varphi))(\tau,\cdot),u(\tau,\cdot)\rangle \ \dd \tau
+\langle \varphi(0,\cdot),u^{0}\rangle =0. \notag\\
. \label{weak formulation for nonlocal}      
\end{align}    
\end{defn}
\begin{rem}
Note the following: 
\begin{itemize}
\item The above definition has been devised in order to circumvent the fact that $\eta\ast u$ is not well defined when $u$ is a tempered distribution and $\eta\in \mathrm{BV}(\mathbb{R}^{d})$. This is the reason why we let the convolution of $\overline{\eta}$ bare on $B\varphi$, motivated by the identity $\int (\eta\ast f) g=\int f(\overline{\eta}\ast g)$. We insist on imposing minimal regularity on $\eta$ since eventually we will invoke this concept of solution in the case where $\eta$ is anisotropic (and as such, exhibits $\mathrm{BV}$ jump discontinuities). 
\item All the integrals in  \eqref{weak formulation for nonlocal} are well defined. Indeed, if $\rho\in \mathrm{BV}(\mathbb{R}^{d})$ then for some compact $K\subset \mathbb{R}^{d}$ it holds that   
\begin{align*}
\left\Vert \mathrm{div}_{x}(\rho\ast (B\varphi))(\tau,\cdot)\right\Vert_{H^{s}}\lesssim \left\Vert \rho\right\Vert_{\mathrm{BV}}\left\Vert B\right\Vert_{W^{s,\infty}(K)} \left\Vert \varphi(\tau,\cdot)\right\Vert_{H^{s}}.   \end{align*}
\item If in addition $\eta\in\mathcal{S}(\mathbb{R}^{d})$ in Definition \ref{Defintion measure valued for nonlocal} then the left-hand side of \eqref{weak formulation for nonlocal} can be written alternatively as 
\begin{align*}
\int_{0}^{T}\langle \partial_{\tau}\varphi(\tau,\cdot), u(\tau,\cdot)\rangle\ \dd \tau -\int_{0}^{T}\int_{\mathbb{R}^{d}}B\cdot \nabla_{x}\eta\ast  u(\tau,x)\varphi(\tau,x)\ \dd x\dd \tau+\langle \varphi(0,\cdot), u^{0}\rangle=0.
\end{align*}
\end{itemize}
\end{rem}
We start with the case where $B$ is bounded - the case where $B$  is linear in $v$ (and thus fails to be bounded) will be handled afterwards through a different method. We prove well-posedness for classical solutions and then prove well-posedness for distributional solutions. 
\begin{thm} \textup{(\textbf{Classical solutions for \eqref{nonlocal Cauhy problem simplification}}).}
\label{wellposedness classical sol}
\\
Suppose that: 
\begin{itemize}
\item $u^{0}\in W^{1,\infty}(\mathbb{R}^{d})$.
\item $B\in W^{1,\infty}( \mathbb{R}^{d})\cap C^{\infty}(\mathbb{R}^{d})$. 
\item $\eta\in  \mathrm{BV}(\mathbb{R}^{d})$.
\end{itemize}
Then, there exist a unique classical solution $u\in W^{1,\infty}([0,T]\times \mathbb{R}^{d})$ to \eqref{nonlocal Cauhy problem simplification} with initial data $u^{0}$. 
\end{thm}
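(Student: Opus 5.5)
The equation is linear, so I will treat it as a linear evolution equation on the Banach space $X=W^{1,\infty}(\mathbb{R}^{d})$ (with $\varepsilon=1$) and obtain the solution from a fixed point / Picard iteration, i.e.\ as an ODE in $X$. Write the equation as
\[
\partial_{t}u=\mathcal{L}u,\qquad \mathcal{L}u\coloneqq -B\cdot\nabla_{x}(\eta\ast u).
\]
The key observation is that $\mathcal{L}$ is a \emph{bounded} linear operator on $L^{\infty}$ and on $W^{1,\infty}$. The derivative can be moved onto the kernel: since $\eta\in\mathrm{BV}(\mathbb{R}^{d})$, $\nabla\eta$ is a finite vector measure $D\eta$ with $|D\eta|(\mathbb{R}^{d})=|\eta|_{\mathrm{BV}}$. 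Hence for $u\in L^{\infty}$,
\[
\nabla_{x}(\eta\ast u)=D\eta\ast u, \qquad \|\nabla_{x}(\eta\ast u)\|_{L^{\infty}}\le |\eta|_{\mathrm{BV}}\|u\|_{L^{\infty}}.
\]
Moreover, for $u\in W^{1,\infty}$ we have $\nabla(D\eta\ast u)=D\eta\ast\nabla u$, which is bounded by $|\eta|_{\mathrm{BV}}\|\nabla u\|_{\infty}$. Multiplying by $B\in W^{1,\infty}$ and using the product rule gives
\[
\|\mathcal{L}u\|_{W^{1,\infty}}\le C\,\|B\|_{W^{1,\infty}}\,|\eta|_{\mathrm{BV}}\,\|u\|_{W^{1,\infty}}.
\]
So $\mathcal{L}\in\mathcal{B}(X)$.

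\textbf{Existence.} Because $\mathcal{L}$ is bounded, I will set
\[
u(t)=e^{t\mathcal{L}}u^{0}=\sum_{k\ge 0}\frac{t^{k}\mathcal{L}^{k}u^{0}}{k!},
\]
which converges in $X$ uniformly on $[0,T]$. Equivalently, the Picard iteration for $u(t)=u^{0}+\int_{0}^{t}\mathcal{L}u(\tau)\,\dd\tau$ is a contraction in $C([0,T_{0}];X)$ for $T_{0}\|\mathcal{L}\|<1$, and since the equation is linear the solution can be iterated up to any $T$. This gives $u\in C^{1}([0,T];X)$ with $\partial_{t}u=\mathcal{L}u\in C([0,T];X)$. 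In particular $u$, $\nabla_{x}u$ and $\partial_{t}u$ are bounded on $[0,T]\times\mathbb{R}^{d}$, so $u\in W^{1,\infty}([0,T]\times\mathbb{R}^{d})$. The equation holds pointwise in $t$ as an identity in $W^{1,\infty}$, hence a.e.\ in $(t,x)$, and $u(0)=u^{0}$. If ``classical'' is meant literally (pointwise derivatives), then $\eta\ast u$ is Lipschitz, and the identity $\partial_{t}u=\mathcal{L}u$ holds in $C([0,T];L^{\infty})$, which suffices.

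\textbf{Uniqueness.} Let $w$ be the difference of two solutions in $W^{1,\infty}([0,T]\times\mathbb{R}^{d})$. Then $w(0)=0$ and $w(t)=\int_{0}^{t}\mathcal{L}w(\tau)\,\dd\tau$, where $\partial_{t}w$ is bounded, so $w$ is Lipschitz in time with values in $L^{\infty}$. Since already $\|\mathcal{L}w\|_{L^{\infty}}\le\|B\|_{\infty}|\eta|_{\mathrm{BV}}\|w\|_{L^{\infty}}$, we get
\[
\|w(t)\|_{\infty}\le C\int_{0}^{t}\|w(\tau)\|_{\infty}\,\dd\tau,
\]
and Gronwall's inequality gives $w\equiv0$. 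Note that this uniqueness argument only needs the $L^{\infty}$ bound on $\mathcal{L}$, not the $W^{1,\infty}$ one.

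\textbf{Main obstacle.} The only delicate point is justifying $\nabla(\eta\ast u)=D\eta\ast u$ and its $L^{\infty}$ bound when $\eta$ is merely $\mathrm{BV}$ and $u\in L^{\infty}$ is not integrable. I would first check the identity for $\eta\in C_{c}^{\infty}$ by duality against test functions. Then I would pass to general $\eta$ by approximating in $L^{1}$ with $\eta_{n}\to\eta$ and $|D\eta_{n}|(\mathbb{R}^{d})\to|D\eta|(\mathbb{R}^{d})$ (strict convergence), and use weak-$\ast$ convergence of $D\eta_{n}\ast u$. Everything else is standard linear ODE theory in a Banach space.
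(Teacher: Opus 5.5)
Your argument is correct and shares the paper's skeleton. Both use a Picard/Banach fixed point for the Duhamel form $u(t)=u^{0}+\int_{0}^{t}\mathcal{L}u(\tau)\,\dd\tau$, driven by $\|\nabla_{x}(\eta\ast u)\|_{\infty}\le\|\eta\|_{\mathrm{TV}}\|u\|_{\infty}$, with linearity allowing continuation to all of $[0,T]$. Your Gr\"onwall uniqueness and the paper's uniqueness of the fixed point are interchangeable.

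You differ in how you obtain the Lipschitz regularity. The paper runs the contraction in $L^{\infty}([0,T_{\ast}]\times\mathbb{R}^{d})$ and derives \eqref{Lipschitz bound on u} by Gr\"onwall only for $\eta\in C^{\infty}_{0}$. It then reaches $\eta\in\mathrm{BV}$ through an unspecified ``standard approximation argument'', which would still need a compactness/stability step. You notice that the same gradient computation says precisely that $\mathcal{L}$ is bounded on $W^{1,\infty}(\mathbb{R}^{d})$. Hence the fixed point in $C([0,T];W^{1,\infty})$ is Lipschitz in $(t,x)$ from the outset. No kernel approximation is needed, and the initial condition is meaningful in your space, unlike the trace condition in $\mathfrak{X}$.

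One simplification: your ``main obstacle'' needs no strict-convergence approximation. For $\eta\in\mathrm{BV}$ and $u\in L^{\infty}$, $D\eta\ast u$ is a.e.\ well defined. Indeed, the set $\{(x,y):x-y\in N\}$ is null for Lebesgue measure $\otimes\,|D\eta|$ whenever $N$ is Lebesgue-null. The identity $\nabla(\eta\ast u)=D\eta\ast u$ then follows from Fubini after testing against $\phi\in C^{\infty}_{0}$.

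You can also strengthen your remark about classical solutions. For Lipschitz $u$ the function $D\eta\ast u$ is continuous, so $\eta\ast u(t,\cdot)\in C^{1}$. Since $u\in C^{1}([0,T];W^{1,\infty})$ with continuous representatives, the equation holds at every point $(t,x)$. So you get a genuinely classical solution, not just an identity in $C([0,T];L^{\infty})$.
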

\begin{proof}
We aim to show the existence and uniqueness of a solution $u\in L^{\infty}([0,T]\times \mathbb{R}^{d})$ for the integral equation 
\begin{align}
 u(t,x)=u^{0}(x)+\int_{0}^{t}B(x)\cdot \nabla_{x}\eta\ast u(\tau,x)\ \dd \tau.   \label{integral equation} 
\end{align}
Let $T_{\ast}=\frac{1}{2\max\{1,\left\Vert \eta\right\Vert_{\mathrm{TV}} \left\Vert B\right\Vert_{\infty} \}}$ and define
\begin{align*}
\mathfrak{X}\coloneqq \left\{u\in L^{\infty}([0,T_{\ast}]\times \mathbb{R}^{d}):u(0,\cdot)=u^{0}\right\}.   \end{align*}
Given $F\in \mathfrak{X}$ define   
\begin{align*}
u[F](t,x)\coloneqq u^{0}(x)+\int_{0}^{t}B(x)\cdot \nabla_{x}\eta\ast F(\tau,x)\ \dd \tau.   \end{align*}
Since $B\in L^{\infty}(\mathbb{R}^{d})$ it is evident that $u[F]\in \mathfrak{X}$ provided $F\in \mathfrak{X}$. We assert that $F\mapsto u[F]$ is a contraction from $\mathfrak{X}$ to itself. Given $F_{1},F_{2}\in \mathfrak{X}$ we abbreviate $u_{1}=u[F_{1}]$ and $u_{2}=u[F_{2}]$.  
Then, it holds that 
\begin{align*}
(u_{1}-u_{2})(t,x)=\int_{0}^{t}B(x)\cdot \nabla_{x}\eta\ast 
(F_{1}-F_{2})(\tau,x) \ \dd \tau.
\end{align*} 
Therefore it follows that 
\begin{align*}
\left\Vert u_{1}-u_{2}\right\Vert_{L^{\infty}([0,T_{\ast}]\times \mathbb{R}^{d})}\leq T_{\ast}\left\Vert \eta\right\Vert_{\mathrm{TV}}\left\Vert B\right\Vert_{\infty}\left\Vert F_{1}-F_{2}\right\Vert_{L^{\infty}([0,T_{\ast}]\times \mathbb{R}^{d})}= \frac{1}{2}\left\Vert F_{1}-F_{2}\right\Vert_{L^{\infty}([0,T_{\ast}]\times \mathbb{R}^{d})} .  \end{align*}
We have thus proved that $F\mapsto u[F]$ is a contraction and therefore by the Banach contraction principle there is a unique solution $u\in L^{\infty}([0,T_{\ast}]\times \mathbb{R})$ to the equation 
\begin{align}
u(t,x)=u^{0}(x)+\int_{0}^{t}B(x)\cdot \nabla_{x}\eta\ast u(\tau,x)\ \dd \tau. \label{integral version}  \end{align}
By a standard iteration argument we conclude the existence of a unique solution $u\in L^{\infty}([0,T]\times \mathbb{R}^{d})$ to \eqref{integral equation} on the entire time interval $[0,T]$. \\
To show that $u\in W^{1,\infty}([0,T]\times \mathbb{R}^{d})$ we first assume in addition that $\eta\in C^{\infty}_{0}(\mathbb{R}^{d})$. In this case it follows  immediately from \eqref{integral version} that $u\in W^{1,\infty}([0,T]\times \mathbb{R}^
{d})$. Therefore taking the gradient in $x$ in \eqref{integral version} we find that 
\begin{align*}
\left\Vert u(t,\cdot)\right\Vert_{W^{1,\infty}}&=\left\Vert u(t,\cdot)\right\Vert_{\infty} +\left\Vert \nabla_{x}u(t,\cdot)\right\Vert_{\infty}\\
&\leq \left\Vert u^
{0}\right\Vert_{W^{1,\infty}}+\left\Vert B\right\Vert_{\infty}\left\Vert \eta\right\Vert_{\mathrm{TV}} \int_{0}^{t}\left\Vert u(\tau,\cdot)\right\Vert_{\infty}\ \dd \tau   \\&+\left\Vert D_{x}B\right\Vert_{\infty}\left\Vert \eta\right\Vert_{\mathrm{TV}}\int_{0}^{t}\left\Vert u(\tau,\cdot)\right\Vert_{\infty}\ \dd \tau
+\left\Vert B\right\Vert_{\infty}\left\Vert \eta\right\Vert_{\mathrm{TV}}\int_{0}^{t}\left\Vert \nabla_{x}u(\tau,\cdot)\right\Vert_{\infty}\ \dd \tau\\
&\leq \left\Vert u^{0}\right\Vert_{W^{1,\infty}}+2\left\Vert \eta\right\Vert_{\mathrm{TV}}\left\Vert B\right\Vert_{W^{1,\infty}}\int_{0}^{t}\left\Vert u(\tau,\cdot)\right\Vert_{W^{1,\infty}}\ \dd \tau.   
\end{align*}
By Gr\"onwall's lemma it follows that 
\begin{align}
\left\Vert u(t,\cdot)\right\Vert_{W^{1,\infty}}\leq e^{2t\left\Vert \eta\right\Vert_{\mathrm{TV}}\left\Vert B\right\Vert_{W^{1,\infty}} }\left\Vert u^{0}\right\Vert_{W^{1,\infty}}.\label{W1infty in x}       
\end{align}
Consequently it also follows that 
\begin{align}
\left\Vert \partial_{t}u\right\Vert_{\infty}\leq \left\Vert B\right\Vert_{\infty}\left\Vert \eta\right\Vert_{1}e^{2T\left\Vert \eta\right\Vert_{\mathrm{TV}}\left\Vert B\right\Vert_{W^{1,\infty}} }\left\Vert u^{0}\right\Vert_{W^{1,\infty}}. \label{lip in t est}       
\end{align}
To conclude, gathering \eqref{W1infty in x}-\eqref{lip in t est} we get 
\begin{align}
\left\Vert u\right\Vert_{W^{1,\infty}_{t,x}}\leq e^{2T\left\Vert \eta\right\Vert_{\mathrm{TV}}\left\Vert B\right\Vert_{W^{1,\infty}}  }(1+\left\Vert B\right\Vert_{\infty}\left\Vert \eta\right\Vert_{1})\left\Vert u^{0}\right\Vert_{W^{1,\infty}}.\label{Lipschitz bound on u}      \end{align}
The bound in \eqref{Lipschitz bound on u} depends only on $T,\left\Vert B\right\Vert_{W^{1,\infty}},\left\Vert u^{0}\right\Vert_{W^{1,\infty}_{x}}$ and $\left\Vert \eta\right\Vert_{\mathrm{BV}} $. Therefore by a standard approximation argument it is straightforward to deduce that the unique solution to \eqref{nonlocal Cauhy problem simplification} is in  $W^{1,\infty}([0,T]\times \mathbb{R}^{d})$ provided $\eta\in \mathrm{BV}(\mathbb{R}^{d})$.
\end{proof}
The following elementary application of the Banach--Alaoglu theorem will be used frequently in the sequel and is therefore encapsulated in the form of a lemma. 
\begin{lem} \label{BanachAl Lemma}
Let $\{u^{\delta}\}_{\delta>0}\subset L^{\infty}([0,T];H^{-s}(\mathbb{R}^{d}))$ and suppose there is some $C>0$ such that $\left\Vert u^{\delta}\right\Vert_{L^{\infty}([0,T];H^{-s}(\mathbb{R}^{d}))} \leq C$ for all $\delta>0$. Then, there is some $u\in L^{\infty}([0,T];H^{-s}(\mathbb{R}^{d}))$ and a subsequence $\{u_{\delta_{k}}\}_{k=1}^{\infty}$ such that $u_{\delta_{k}}\underset{k\rightarrow \infty}{\rightharpoonup}u$ in $L^{2}([0,T];H^{-s}(\mathbb{R}^{d}))$.   
\end{lem}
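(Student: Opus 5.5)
The plan is to use the Hilbert structure of $H^{-s}(\mathbb{R}^{d})$: it makes $L^{2}([0,T];H^{-s}(\mathbb{R}^{d}))$ a Hilbert space, and bounded sets in a Hilbert space are weakly sequentially compact.

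First, I will check that the uniform bound carries over to the $L^{2}$ norm. Since $[0,T]$ has finite measure,
\begin{align*}
\left\Vert u^{\delta}\right\Vert_{L^{2}([0,T];H^{-s})}\leq \sqrt{T}\left\Vert u^{\delta}\right\Vert_{L^{\infty}([0,T];H^{-s})}\leq \sqrt{T}C .
\end{align*}
So $\{u^{\delta}\}$ is bounded in $L^{2}([0,T];H^{-s}(\mathbb{R}^{d}))$. This space is a Hilbert space, with inner product $\int_{0}^{T}\langle f(t),g(t)\rangle_{H^{-s}}\,\dd t$, because $H^{-s}$ is a separable Hilbert space via the weighted $L^{2}$ norm on the Fourier side.

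Next, I will extract the subsequence. Reflexivity, via Banach--Alaoglu together with separability (or simply the Hilbert-space weak compactness theorem), applied to any sequence $\delta_{k}\to 0$, gives a further subsequence and a limit $u\in L^{2}([0,T];H^{-s})$ with $u^{\delta_{k}}\rightharpoonup u$ weakly in $L^{2}([0,T];H^{-s})$.

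Finally, I have to show that $u$ actually lies in $L^{\infty}([0,T];H^{-s})$, with $\left\Vert u\right\Vert_{L^{\infty}H^{-s}}\leq C$. This is the only step needing care. I plan to test against $g=\mathbf{1}_{E}\,h$, where $E\subset[0,T]$ is measurable and $h\in H^{-s}$ has $\Vert h\Vert_{H^{-s}}\le 1$. Weak convergence and the uniform bound then give
\begin{align*}
\left|\int_{E}\langle u(t),h\rangle_{H^{-s}}\,\dd t\right| = \lim_{k}\left|\int_{E}\langle u^{\delta_{k}}(t),h\rangle_{H^{-s}}\,\dd t\right|\leq C|E| .
\end{align*}
Since $E$ is arbitrary, this yields $|\langle u(t),h\rangle|\le C$ for a.e. $t$. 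Taking $h$ in a countable dense subset of the unit ball of $H^{-s}$ (possible by separability), I conclude $\Vert u(t)\Vert_{H^{-s}}\le C$ for a.e. $t$.

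An alternative for this last step is to use the weak-$\ast$ compactness of $L^{\infty}([0,T];H^{-s}) = (L^{1}([0,T];H^{s}))^{\ast}$ and identify the two limits. The testing argument above avoids the duality identification, so I would use it.
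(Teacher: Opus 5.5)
Your proof is correct. The extraction step is the same as in the paper: boundedness in the Hilbert space $L^{2}([0,T];H^{-s}(\mathbb{R}^{d}))$ followed by weak sequential compactness. Where you differ is in how you upgrade the limit to $L^{\infty}([0,T];H^{-s})$.

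The paper works as follows. For each $p\in(2,\infty)$ it extracts a further subsequence converging weakly in $L^{p}([0,T];H^{-s})$. It identifies that limit with $u$, since weak convergence in $L^{p}$ implies weak convergence in $L^{2}$ on a bounded interval. It then uses weak lower semicontinuity of the $L^{p}$ norm, and finally lets $p\to\infty$ via $\Vert f\Vert_{L^{p}}\to\Vert f\Vert_{L^{\infty}}$ on a finite measure space.

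You instead test once against $\mathbf{1}_{E}h$, obtaining $\bigl|\int_{E}\langle u(t),h\rangle\,\dd t\bigr|\le C|E|$ for every measurable $E$. From this you deduce the a.e. bound $|\langle u(t),h\rangle|\le C$, by Lebesgue differentiation or by taking $E$ to be a superlevel set of the real or imaginary part. You then take the supremum over a countable dense subset of the unit ball of $H^{-s}$.

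Trade-offs:
\begin{itemize}
\item Your route uses weak compactness only once and yields the sharp constant $C$ directly. The price is a pointwise-in-time argument that needs separability of $H^{-s}$ and a strongly measurable representative of $u$.
\item The paper's route stays entirely at the level of norms and abstract weak compactness, using reflexivity of every $L^{p}([0,T];H^{-s})$. The price is infinitely many extractions and limit identifications.
\end{itemize}

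Incidentally, the paper's intermediate bound should read $\Vert\widetilde{u}\Vert_{L^{p}}\le CT^{1/p}$ rather than $\le C$. This is harmless as $p\to\infty$, and your argument never encounters it.
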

\begin{proof}
By the theorem of Banach--Alaoglu there is a subsequence $\left\{u^{\delta_{{k}}}\right\}_{k=1}^{\infty}$ and some $u\in L^{2}([0,T];H^{-s}(\mathbb{R}^{d}))$ such that $u_{\delta_{k}}\underset{k\rightarrow \infty}{\rightharpoonup} u$ weakly in $L^{2}([0,T];H^{-s}(\mathbb{R}^{d}))$. 
For the same reason, given some $p\in (2,\infty)$, there exist some subsequence $\{u^{\delta_{k_{l}}}\}_{l=1}^{\infty}$ and some $\widetilde{u}\in L^{p}([0,T];H^{-s}(\mathbb{R}^{d}))$ such that $u^{\delta_{k_{l}}}\underset{l\rightarrow \infty}{\rightharpoonup}\widetilde{u}$ in $L^{p}([0,T];H^{-s}(\mathbb{R}^{d}))$. Moreover, there holds the estimate 
\begin{align*}
\left\Vert \widetilde{u}\right\Vert_{L^{p}([0,T];H^{-s}(\mathbb{R}^{d}))}\leq C.      
\end{align*}
Since weak convergence in $L^{p}([0,T];H^{-s}(\mathbb{R}^{d}))$ implies weak convergence in $L^{2}([0,T];H^{-s}(\mathbb{R}^{d}))$ when $p>2$, it follows that $u^{\delta_{k_{l}}}\underset{l\rightarrow \infty}{\rightharpoonup} \widetilde{u}$ weakly in $L^{2}([0,T];H^{-s}(\mathbb{R}^{d}))$. Therefore by uniqueness, it must be that $\widetilde{u}=u$. We conclude that for all $p\in (2,\infty)$ it holds that 
\begin{align*}
\left\Vert u\right\Vert_{L^{p}([0,T];H^{-s}(\mathbb{R}^{d}))}\leq C.      
\end{align*}
Letting $p\rightarrow
 \infty$ we conclude that $\left\Vert u\right\Vert_{L^{\infty}([0,T];H^{-s}(\mathbb{R}^{d}))}\leq C$ so that $u\in L^{\infty}([0,T];H^{-s}(\mathbb{R}^{d}))$. 
\end{proof}
\begin{thm}
\textup{(\textbf{Distributional solutions of \eqref{nonlocal Cauhy problem simplification}})}. \\
Suppose that: 
\begin{itemize}
    \item $u^{0}\in H^{-s}(\mathbb{R}^{d})\cap \mathcal{M}(\mathbb{R}^{d})$. 
    \item $B\in W^{s,\infty}(\mathbb{R}^{d})\cap C^{\infty}(\mathbb{R}^{d})$.
    \item $\eta\in \mathrm{BV}(\mathbb{R}^{d})$. 
\end{itemize}
Then, for any $T>0$ there exists a unique distributional solution $u\in L^{\infty}([0,T];H^{-s}(\mathbb{R}^{d}))$ with initial data $u^{0}$ to \eqref{nonlocal Cauhy problem simplification} (in the sense of Definition \ref{Defintion measure valued for nonlocal}). \label{measure valued solutions existence and uniqueness}
\end{thm}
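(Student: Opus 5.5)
Existence will come from approximation and compactness, and uniqueness from duality. The approximation is built from Theorem \ref{wellposedness classical sol}, which provides classical solutions.

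\textbf{Existence.} Mollify the initial datum, $u^{0}_{\delta}\coloneqq \rho_{\delta}\ast u^{0}$. Then $u^{0}_{\delta}\in W^{1,\infty}\cap H^{-s}$, with $\|u^{0}_{\delta}\|_{H^{-s}}\le \|u^{0}\|_{H^{-s}}$ and $u^{0}_{\delta}\to u^{0}$ in $H^{-s}$. Theorem \ref{wellposedness classical sol} gives classical solutions $u^{\delta}\in W^{1,\infty}([0,T]\times\mathbb{R}^{d})$. The key step is a uniform bound
\[
\sup_{t\le T}\|u^{\delta}(t)\|_{H^{-s}}\le C(T,\|B\|_{W^{s,\infty}},\|\eta\|_{\mathrm{BV}})\,\|u^{0}\|_{H^{-s}}.
\]
I would obtain it by duality rather than by an energy estimate on $u^{\delta}$. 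Fix $t$ and $\psi\in C_{0}^{\infty}$, and solve the backward adjoint problem
\[
\partial_{\tau}\varphi+\mathrm{div}_{x}(\overline{\eta}\ast(B\varphi))=0 \ \text{ on } [0,t], \qquad \varphi(t)=\psi.
\]
This is a linear ODE in $H^{s}$ whose right-hand side is a bounded operator on $H^{s}$: by the estimate in the Remark, $\|\mathrm{div}(\overline{\eta}\ast(B\varphi))\|_{H^{s}}\lesssim\|\eta\|_{\mathrm{BV}}\|B\|_{W^{s,\infty}}\|\varphi\|_{H^{s}}$. Here the global hypothesis $B\in W^{s,\infty}$ replaces the local $W^{s,\infty}(K)$ norm, and it is exactly what is needed. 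Picard iteration in $C([0,t];H^{s})$ then gives a unique solution with $\|\varphi(\tau)\|_{H^{s}}\le e^{C(t-\tau)}\|\psi\|_{H^{s}}$.

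Pairing the classical solution with $\varphi$ and differentiating in time gives $\langle u^{\delta}(t),\psi\rangle=\langle u^{0}_{\delta},\varphi(0)\rangle$. Taking the supremum over $\psi$ yields the $H^{-s}$ bound. If $\varphi$ is not smooth enough for this computation, I would first mollify $\psi$ and $\eta$ and then pass to the limit, since the bound only involves $\|\eta\|_{\mathrm{BV}}$.

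With the bound in hand, Lemma \ref{BanachAl Lemma} gives a weak limit $u\in L^{\infty}([0,T];H^{-s})$ along a subsequence. Each $u^{\delta}$ satisfies \eqref{weak formulation for nonlocal} with datum $u^{0}_{\delta}$; this follows by integration by parts using $\int(\eta\ast f)g=\int f(\overline{\eta}\ast g)$. For fixed $\varphi$, both $\partial_{\tau}\varphi$ and $\mathrm{div}(\overline{\eta}\ast(B\varphi))$ lie in $L^{2}([0,T];H^{s})$, so the weak formulation passes to the limit and $u$ is a distributional solution.

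\textbf{Uniqueness.} By linearity it suffices to show that a distributional solution $w\in L^{\infty}([0,T];H^{-s})$ with zero initial data vanishes. Fix $t_{0}<T$ and $\psi$, and let $\varphi$ solve the adjoint problem above on $[0,t_{0}]$. Take as test function $\varphi\,\chi_{n}(\tau)$, where $\chi_{n}$ is a smooth cutoff approximating $\mathbf{1}_{[0,t_{0}]}$. Because $\varphi$ solves the adjoint equation, the bulk terms cancel and only $\int\chi_{n}'\langle\varphi,w\rangle\,\dd\tau$ survives. Letting $n\to\infty$ at Lebesgue points $t_{0}$ gives $\langle w(t_{0}),\psi\rangle=0$, hence $w=0$ for a.e.\ $t$.

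\textbf{Main obstacle.} The hardest part is technical. The test functions in Definition \ref{Defintion measure valued for nonlocal} are $C_{0}^{\infty}$, whereas $\varphi$ is only $C^{1}([0,t_{0}];H^{s})$ and is not compactly supported in $x$. I would handle this by density: the weak formulation extends by continuity to test functions in $W^{1,1}([0,T];H^{s})$, since every term is continuous in that norm. I would also check that $B\varphi\in H^{s}$ when $B\in W^{s,\infty}$, which holds by the standard multiplier estimate for Sobolev spaces.
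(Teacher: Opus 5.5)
Your proof is correct, but the two key steps go differently from the paper's. The existence scheme is the same as the paper's: mollify the datum, invoke Theorem \ref{wellposedness classical sol}, extract a weak limit with Lemma \ref{BanachAl Lemma}, and pass to the limit in \eqref{weak formulation for nonlocal}.

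\textbf{Uniform bound.} The paper gets it from an $L^{1}$ estimate rather than by duality. Multiplying by $\mathrm{sgn}(u^{\delta})$ gives $\|u^{\delta}(t)\|_{1}\le e^{t\|B\|_{\infty}\|\eta\|_{\mathrm{TV}}}|u^{0}|(\mathbb{R}^{d})$. The embedding $L^{1}\hookrightarrow H^{-s}$ then finishes.

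\textbf{Uniqueness.} The paper builds no adjoint flow. It tests with $\chi(\tau)\varphi(x)$ and obtains, for zero data, $\langle\varphi,u(t)\rangle=\int_{0}^{t}\langle\mathrm{div}_{x}(\overline{\eta}\ast(B\varphi)),u\rangle\,\dd\tau$. It bounds the integrand by $\|\eta\|_{\mathrm{TV}}\|B\|_{W^{s,\infty}}\|\varphi\|_{H^{s}}\|u(\tau)\|_{H^{-s}}$, takes the supremum over $\|\varphi\|_{H^{s}}\le 1$, and applies Gr\"onwall directly to $\|u(t)\|_{H^{-s}}$.

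\textbf{Comparison.} Both proofs rest on the same fact: $\varphi\mapsto\mathrm{div}_{x}(\overline{\eta}\ast(B\varphi))$ is bounded on $H^{s}$.
\begin{itemize}
\item Your duality route bounds the solution by $\|u^{0}\|_{H^{-s}}$ alone. It therefore needs neither the measure assumption nor $s>\frac{d}{2}$. The paper's embedding step uses $s>\frac{d}{2}$ even though it is not among the stated hypotheses, so your argument actually covers the theorem as written.
\item The paper's route is shorter for uniqueness, since it needs no adjoint problem and no Lebesgue-point argument. Its $L^{1}$ bound uses only $\|B\|_{\infty}$.
\item The paper's Gr\"onwall computation, applied to $u^{\delta}$, would give your $H^{-s}$ bound without solving any adjoint problem. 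In fact, since the generator is bounded, one could solve the equation directly as a linear ODE in $H^{-s}$.
\end{itemize}

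\textbf{One small repair.} The pairing $\langle u^{\delta}(\tau),\varphi(\tau)\rangle$ of $u^{\delta}(\tau)\in W^{1,\infty}$ with $\varphi(\tau)\in H^{s}$ is not automatically defined. The issue is integrability, not smoothness, so mollifying $\psi$ and $\eta$ does not address it. The fix is to observe that $u^{0}_{\delta}\in L^{2}$ and that $u\mapsto B\cdot\nabla_{x}(\eta\ast u)$ is bounded on $L^{2}$ with norm at most $\|B\|_{\infty}\|\eta\|_{\mathrm{TV}}$. Hence $u^{\delta}\in C^{1}([0,T];L^{2})$, and the product rule for $\langle u^{\delta}(\tau),\varphi(\tau)\rangle$ is justified.
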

\begin{proof}
\textbf{Existence}. Let $\{\chi_{\delta}\}_{\delta>0}$ be a family of standard mollifiers and consider the regularized problem 
\begin{align}
\partial_{t}u^{\delta}+B\cdot \nabla_{x}(\eta\ast u^{\delta})=0, \ u^{\delta}(0,\cdot)=\chi_{\delta}\ast u^{0}.\label{regularized}     
\end{align}
By Theorem
\ref{wellposedness classical sol} there exist a unique solution $u^{\delta}\in W^{1,\infty}([0,T]\times \mathbb{R}^{d})$ for \eqref{regularized}. 
Multiplying \eqref{regularized} by $\mathrm{sgn}(u^{\delta})$ and integrating we obtain  
\begin{align*}
\frac{\dd}{\dd t}\left\Vert u^{\delta}(t,\cdot)\right\Vert_{1}&=-\int_{\mathbb{R}^{d}}\mathrm{sgn}(u^{\delta})B\cdot \nabla_{x}(\eta\ast u^{\delta})\ \dd x\\
&\leq \left\Vert B\right\Vert_{\infty}\left\Vert (\nabla_{x}\eta \ast u^{\delta})(t,\cdot)\right\Vert_{1}\leq \left\Vert B\right\Vert_{\infty}\left\Vert \eta\right\Vert_{\mathrm{TV}}\left\Vert u^{\delta}(t,\cdot)\right\Vert_{1}.        
\end{align*}
Therefore we conclude the estimate 
\begin{align}
\left\Vert u^{\delta}(t,\cdot)\right\Vert_{1}\leq e^{t\left\Vert B\right\Vert_{\infty}\left\Vert \eta\right\Vert_{\mathrm{TV}} }\left\Vert \chi_{\delta}\ast u^{0}\right\Vert_{1} \leq e^{t\left\Vert B\right\Vert_{\infty}\left\Vert \eta\right\Vert_{\mathrm{TV}} }\left\vert u^
{0}\right\vert.  
\label{L1 uniform in eta}      
\end{align}
Since $L^{1}(\mathbb{R}^{d})$ is embedded is in $H^{-s}(\mathbb{R}^{d})$ for $s>\frac{d}{2}$ it follows that there is a constant $C>0$, independent of $\delta$, such that  
\begin{align*}
\left\Vert u
^{\delta}\right\Vert_{L^{\infty}([0,T];H^{-s}(\mathbb{R}^
{d}))}\leq C.      
\end{align*}
By Lemma \ref{BanachAl Lemma} there is some $u\in L^{\infty}([0,T];H^{-s}(\mathbb{R}^{d}))$ such that (up to an extraction of a subsequence) $u^{\delta}\underset{\delta \rightarrow 0}{\rightharpoonup} u$ weakly in $L^{2}([0,T];H^{-s}(\mathbb{R}^{d}
))$. 
We claim that $u$ satisfies \eqref{weak formulation for nonlocal}. Testing $u^{\delta}$ against a test function $\varphi\in C^{\infty}_{0}((-a,T)\times \mathbb{R}^{d})$ and integrating by parts we get 
\begin{align}
&\int_{0}^{T}\int_{\mathbb{R}^{d}}\partial_{\tau}\varphi(\tau,x) u^{\delta}(\tau,x)\ \dd x\dd \tau+\int_{\mathbb{R}^{d}}\varphi(0,x)\  \chi_{\delta}\ast u^{0}(x)\ \dd x \notag\\
&+\int_{0}^{T}\int_{\mathbb{R}^{d}}\mathrm{div}_{x}(\overline{\eta}\ast (B\varphi))u^{\delta}(\tau,x)\ \dd x\dd \tau=0. \label{testing udelta}   
\end{align}
Thanks to the assumption $B\in W^{s,\infty}$ we have the elementary estimate 
\begin{align*}
\left\Vert  \mathrm{div}_{x}(\overline{\eta}\ast (B\varphi)) \right\Vert_{L^{\infty}([0,T];H^{s}(\mathbb{R}^{d}))}\lesssim 1.       
\end{align*}
Therefore we may pass to the limit as $\delta \rightarrow 0$ in \eqref{testing udelta} and deduce that $u$ satisfies 
\begin{align*}
\int_{0}^{T}\langle \partial_{\tau}\varphi,u\rangle+\langle \mathrm{div}_{x}(\overline{\eta}\ast (B\varphi)),u\rangle\ \dd \tau+ \langle \varphi(0,\cdot ),u^{0}\rangle=0.         
\end{align*}
\textbf{Uniqueness}. By linearity, it suffices to show that if $u$ is a solution to \eqref{nonlocal Cauhy problem simplification} with initial data $u^{0}\equiv 0$ then $u(t,\cdot)\equiv 0$ for all $t\in [0,T]$. So assume $u^{0}\equiv0$ and let $\chi\in C^{\infty}_{0}((0,T))$ and $\varphi\in H^{s}(\mathbb{R}^{d})$. 
Then, invoking \eqref{weak formulation for nonlocal} with $\chi\varphi$ as a test function yields 
\begin{align*}
\int_{0}^{T}\partial_{\tau}\chi(\tau)\langle \varphi,u(\tau,\cdot)\rangle\ \dd \tau=-\int_{0}^{T}\chi(\tau)\langle \mathrm{div}_{x}(\overline{\eta}\ast (B\varphi)), u(\tau,\cdot) \rangle\ \dd \tau.  \end{align*}
Note that 
\begin{align*}
\langle \mathrm{div}_{x}(\overline{\eta}\ast (B\varphi)), u(\tau,\cdot) \rangle\leq \left\Vert \eta\right\Vert_{\mathrm{TV}}\left\Vert B\varphi\right\Vert_{H^{s}}  \left\Vert u\right\Vert_{L^{\infty}([0,T];H
^{-s}(\mathbb{R}^{d}))},  
\end{align*}
so that $\tau\mapsto \langle \mathrm{div}_{x}(\overline{\eta}\ast (B\varphi)), u(\tau,\cdot) \rangle \in L^{\infty}([0,T])$. 
 This proves that $\tau\mapsto \langle \varphi,u(\tau,\cdot)\rangle$ is Lipschitz with weak derivative given by 
\begin{align*}
\frac{\dd}{\dd \tau}\langle \varphi,u(\tau,\cdot)\rangle= \langle \mathrm{div}_{x}(\overline{\eta}\ast (B\varphi)),u(\tau,\cdot)\rangle.     
\end{align*}
Integrating in time we get 
\begin{align*}
\langle \varphi,u(t,\cdot)\rangle&=\int_{0}^{t}\langle \mathrm{div}_{x}(\overline{\eta}\ast (B\varphi)), u(\tau,\cdot)\rangle\ \dd \tau\leq \int_{0}^{t}\left\Vert \overline{\eta}\right\Vert_{\mathrm{TV}}\left\Vert B\varphi\right\Vert_{H^{s}}\left\Vert u(\tau,\cdot)\right\Vert_{H^{-s}}\ \dd \tau\\
&\lesssim \left\Vert \overline{\eta}\right\Vert_{\mathrm{TV}}\left\Vert B\right\Vert_{W^{s,\infty}}\left\Vert \varphi\right\Vert_{H^{s}}\int_{0}^{t}\left\Vert u(\tau,\cdot)\right\Vert_{H^{-s}}\ \dd \tau.             
\end{align*}
Maximizing over all $\varphi\in H^{s}(\mathbb{R}^{d})$ with $\left\Vert \varphi\right\Vert_{H^{s}}\leq 1$ we get 
\begin{align*}
\left\Vert u(t,\cdot)\right\Vert_{H^{-s}}\lesssim \int_{0}^{t}\left\Vert u(\tau,\cdot)\right\Vert_{H^{-s}}\ \dd \tau.      
\end{align*}
By Gr\"onwall's lemma it follows that $u\equiv 0$, as desired. 
\end{proof}
\begin{rem}
An examination of the existence proof in Theorem \ref{nonlocal Cauhy problem simplification} reveals that distributional solutions of \eqref{nonlocal Cauhy problem simplification} can be approximated through solutions of a regularized version thereof. In more detail, suppose that $u^{0}\in H^{-s}(\mathbb{R}^{d})\cap \mathcal{M}(\mathbb{R}^{d})$ and that $\{\chi_{\delta}\}_{\delta >0}$ is a family of standard mollifiers and consider the family of solutions $\{u^{\delta}\}_{\delta>0}$ of \eqref{regularized} with initial data $\chi_{\delta }\ast u^{0}$ and the unique distributional solution $u$ of \eqref{nonlocal Cauhy problem simplification} with initial data $u^{0}$. Then it holds that   $u^{\delta}\underset{\delta \rightarrow 0}{\rightharpoonup} u$ in $L^{2}([0,T];H^{-s}
(\mathbb{R}^{d}))$. \label{Remark about approximating measure valued solutions}     
\end{rem}
We proceed by proving well-posedness for the case $B(x,v)=(v,\mathbf{0})$. Note that the previous argument is not directly applicable, since we relied on the boundedness of $B$. The general strategy is to Fourier-transform the equation -- leading to a linear transport equation with source, and then show that the latter is solvable. We will need the following general Duhamel principle for initial data and source term which are assumed to be complex valued. The proof includes only minor modifications in comparison to the classical Duhamel principle, but is nevertheless included for completeness.     
\begin{lem}\label{Duhamel principle}
Suppose that: 
\begin{itemize}
    \item $W^{0}\in C^{\infty}(\mathbb{R}^{d};\mathbb{C})$.
    \item $A \in \mathrm{Lip}(\mathbb{R}^{d};\mathbb{R}^{d})\cap C^{\infty}(\mathbb{R}^{d};\mathbb{R}^{d})$.
    \item $S\in C^{\infty}(\mathbb{R}^{d};\mathbb{C})$ . 
\end{itemize}
\begin{enumerate}
    \item There exist a solution $\widetilde{W}\in W^{1,\infty}_{\mathrm{loc}}(\mathbb{R}_{+}\times \mathbb{R}^{d})$ to the  Cauchy problem 
\begin{align}
\partial_{t}\widetilde{W}-A\cdot \nabla_{x}\widetilde{W}=S+A\cdot \nabla_{x}W^{\mathrm{0}}, \ \widetilde{W}(0,\cdot)=0. \label{W tilde eq statement} 
\end{align}
\item If $\widetilde{W}$ is a solution of \eqref{W tilde eq statement} then   $W=W^{0}+\widetilde{W}$ is a solution of the Cauchy problem 
\begin{align}
\partial_{t}W-A\cdot \nabla_{x}W=S,\ W(0,\cdot)=W^{0}. \label{transport with source}    
\end{align}
\item  If $W_{1},W_{2}\in W^
{1,\infty}_{\mathrm{loc}}(\mathbb{R}_{+}\times \mathbb{R}^{d})$ are solutions to \eqref{transport with source} with the same initial data $W^{0}\in C^{\infty}(\mathbb{R}^{d};\mathbb{C})\cap L^{\infty}(\mathbb{R}^{d};\mathbb{C})$ and $S\in C^{\infty}(\mathbb{R}^{d};\mathbb{C})\cap L^{\infty}(\mathbb{R}^{d};\mathbb{C})$ then $W_{1}\equiv W_{2}$. 
\end{enumerate}
\end{lem}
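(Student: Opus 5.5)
The plan is to run the method of characteristics for the vector field $-A$, which is globally Lipschitz, and then verify the three parts in order.

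\textbf{Part (1).} Let $X(t,x)$ denote the flow of $\dot X=-A(X)$ with $X(0,x)=x$. Since $A$ is Lipschitz and smooth, $X$ is globally defined, smooth, and a diffeomorphism for each $t$. Moreover, Gr\"onwall's lemma gives $|D_{x}X(t,x)|\le e^{t\,\mathrm{Lip}(A)}$. I would then set
\begin{align*}
\widetilde W(t,x)\coloneqq \int_{0}^{t} G\bigl(X(t-\tau,x)\bigr)\,\dd\tau, \qquad G\coloneqq S+A\cdot\nabla_{x}W^{0}.
\end{align*}
Since $G$ is smooth, a direct differentiation using the flow property, namely $\partial_{t}X(t-\tau,x)=-A\cdot\nabla_{x}$ applied to $X(t-\tau,x)$ (the standard identity $\partial_t X=-(A\cdot\nabla_x)X$ for autonomous flows), shows that $\partial_{t}\widetilde W-A\cdot\nabla_{x}\widetilde W=G(x)$ with $\widetilde W(0,\cdot)=0$. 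Real and imaginary parts are handled separately, since $A$ is real. Local Lipschitz regularity follows on compact sets $[0,T]\times K$: there $X$ stays in a compact set, so $G$ and $\nabla G$ are bounded and $D_xX$ is controlled.

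\textbf{Part (2).} This is a one-line computation. Since $W^{0}$ is independent of time,
\begin{align*}
\partial_{t}W-A\cdot\nabla_{x}W=\partial_{t}\widetilde W-A\cdot\nabla_{x}\widetilde W-A\cdot\nabla_{x}W^{0}=S,
\end{align*}
and $W(0,\cdot)=W^{0}$.

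\textbf{Part (3).} I expect this to be the main point, since the solutions are only $W^{1,\infty}_{\mathrm{loc}}$ and the computation must be done along characteristics. Set $D=W_{1}-W_{2}$. Then $D\in W^{1,\infty}_{\mathrm{loc}}$ solves $\partial_{t}D-A\cdot\nabla_{x}D=0$ almost everywhere, with $D(0,\cdot)=0$. Fix $(t,x)$ and consider $g(\tau)=D(\tau,X(t-\tau,x))$.
\begin{itemize}
\item Since $D$ is locally Lipschitz and the curve is smooth, $g$ is Lipschitz.
\item Formally, $g'(\tau)=(\partial_{t}D-A\cdot\nabla_{x}D)(\tau,X(t-\tau,x))$.
\item The difficulty is that the equation holds only a.e.\ in $(\tau,y)$, while a single curve is a null set.
\end{itemize}
To circumvent this, I would mollify in space-time: set $D_{\rho}=D\ast\chi_{\rho}$. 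Then $\partial_{t}D_{\rho}-A\cdot\nabla_{x}D_{\rho}=r_{\rho}$, where $r_{\rho}=(A\cdot\nabla D)\ast\chi_{\rho}-A\cdot\nabla D_{\rho}$ is a commutator. Because $A$ is Lipschitz and $\nabla D\in L^{\infty}_{\mathrm{loc}}$, we have $r_\rho\to0$ in $L^{1}_{\mathrm{loc}}$ and $r_\rho$ is locally bounded. Alternatively, the change of variables $y=X(t-\tau,x)$, which is bi-Lipschitz, shows that for almost every $x$ the curve meets the full-measure set where the equation holds for almost every $\tau$.

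Either route gives $g'=0$ almost everywhere for a.e.\ $x$, hence $D(t,x)=D(0,X(t,x))=0$ for a.e.\ $x$. Continuity of $D$ then gives $D\equiv0$. I would favour the Fubini/change-of-variables argument, since it avoids the commutator estimates. The boundedness assumptions on $W^{0}$ and $S$ are then not essential, but they are harmless.
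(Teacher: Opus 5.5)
Your construction has a sign error that runs through Parts (1) and (3). With $X$ the flow of $\dot X=-A(X)$, the identity you quote, $\partial_t X=-(A\cdot\nabla_x)X$, gives $\partial_t[G(X(t-\tau,x))]=-A\cdot\nabla_x[G(X(t-\tau,x))]$. So your $\widetilde W$ solves $\partial_t\widetilde W+A\cdot\nabla_x\widetilde W=G$, not $\partial_t\widetilde W-A\cdot\nabla_x\widetilde W=G$. Concretely, take $d=1$, $A\equiv 1$, $G(y)=y$. Then $X(t,x)=x-t$, $\widetilde W(t,x)=tx-t^{2}/2$, and $\partial_t\widetilde W-\partial_x\widetilde W=x-2t\neq x$.

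The same slip breaks Part (3). The curve $\gamma(\tau)=X(t-\tau,x)$ satisfies $\gamma'=+A(\gamma)$, so $g'(\tau)=(\partial_t D+A\cdot\nabla_x D)(\tau,\gamma(\tau))$, which the equation does not control. The characteristics of $\partial_t-A\cdot\nabla_x$ are integral curves of $-A$ run forward in $\tau$. The one through $(t,x)$ is therefore $\tau\mapsto X(\tau-t,x)$, and its foot is $X(-t,x)$, i.e.\ the flow of $+A$ at time $t$.

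Replacing $X(t-\tau,x)$ by $X(\tau-t,x)$ throughout repairs both parts. The function $\widetilde W(t,x)=\int_0^t G(X(\tau-t,x))\,\dd\tau$ then satisfies $\partial_t\widetilde W-A\cdot\nabla_x\widetilde W=G$. In Part (3) you get $g'=\partial_t D-A\cdot\nabla_x D$ along $\gamma$, hence $D(t,x)=D(0,X(-t,x))=0$. This matches the paper, which works with the flow of $+A$: $\frac{\dd}{\dd t}X(s,t,x)=A(X(s,t,x))$, $X(s,s,x)=x$.

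After this correction, Parts (1)--(2) coincide with the paper's Duhamel construction, while Part (3) takes a genuinely different route. The paper splits $W_1-W_2$ into real and imaginary parts and writes transport equations for their absolute values. It then evaluates these at a maximum point to conclude that the sup norms are constant in time. That is short, but it presupposes that the supremum is finite and attained at a point where the equation holds pointwise, which is not automatic for $W^{1,\infty}_{\mathrm{loc}}$ functions on $\mathbb{R}^d$.

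Your argument uses Rademacher's theorem and Fubini after the bi-Lipschitz change of variables $(\tau,x)\mapsto(\tau,X(\tau-t,x))$. Take the good set to be where $D$ is differentiable \emph{and} the equation holds, so the chain rule for $g$ is valid there. With that, it handles locally Lipschitz complex-valued solutions directly. It needs neither the real/imaginary splitting nor the boundedness of $W^0$ and $S$, and it yields an explicit representation of $D$ along characteristics.
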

\begin{proof}
\textbf{Step 1. Duhamel formula for $W^{0}=0$.} We consider the case where $W^{0}=0$. For any $s\in \mathbb{R}_{+}$ consider the IVP 
\begin{align}
\partial_{t}V_{s}-A\cdot \nabla_{x}V_{s}=0, \ V_{s}(s,\cdot)=S. \label{linear transport duhamel}    
\end{align}
By assumption $A\in \mathrm{Lip}(\mathbb{R}^{d};\mathbb{R}^{d})$, and therefore there exist a unique  solution $X(s,t,x)$ to the  ODE 
\begin{align}
\frac{\dd}{\dd t}X(s,t,x)=A(X(s,t,x)),\ X(s,s,x)=x.\label{Flow ODE}
\end{align} 
Moreover, we have that  $V_{s}(t,x)=X_{\ast}S$ is a solution to \eqref{linear transport duhamel}. We claim that 
\begin{align*}
W(t,x)\coloneqq \int_{0}^{t}V_{s}(t,x) \ \dd s     
\end{align*} 
is a solution to \eqref{W tilde eq statement}. By direct calculation it holds that   
\begin{align*}
\partial_{t}\left(\int_{0}^{t}V_{s}(t,x)\ \dd s\right)=V_{t}(t,x)+\int_{0}^{t}\partial_{t}V_{s}(t,x)\ \dd s= V_{t}(t,x)+\int_{0}^{t} A(x)\cdot \nabla_{x}V_{s}(t,x)\ \dd s 
\end{align*}   
and hence 
\begin{align*}
 \partial_{t}W(t,x)-A(x)\cdot \nabla_{x}W(t,x)=V_{t}(t,x)=S(x)   
\end{align*}
where in the last equation we used that  $X(t,t,x)=x$. \\
\textbf{Step 2}. We consider now the case of general $W^{0}\in C^{\infty}(\mathbb{R}^{d};\mathbb{C})$. By the previous step there exist a  solution $\widetilde{W}$  to the Cauchy problem 
\begin{align}
\partial_{t}\widetilde{W}-A\cdot \nabla_{x}\widetilde{W}=S+A\cdot \nabla_{x}W^{0}, \ \widetilde{W}(0,\cdot)=0. \label{W tilde eq}  
\end{align}
Define $W(t,x)\coloneqq W^{0}+\widetilde{W}$.  Then we have 
\begin{align*}
\partial_{t}W-A\cdot \nabla_{x}W&=\partial_{t}\widetilde{W}-A\cdot \nabla_{x}(\widetilde{W}+W^{0})\\
&=S+A\cdot \nabla_{x}(\widetilde{W}+W^{0})-A\cdot \nabla_{x}(\widetilde{W}+W^{0})=S.     
\end{align*}
It follows that $W=W^{0}+\widetilde{W}$ solves \eqref{transport with source} where $\widetilde{W}$ is a solution of \eqref{W tilde eq statement}.\\ 
\textbf{Step 3. Uniqueness.} If $W_{1},W_{2}$ are two solutions of \eqref{transport with source} with the same initial data $W^{0}$ then the difference $W_{1}-W_{2}$ satisfies  \begin{align}
\partial_{t}(W_
{1}-W_{2})-A\cdot \nabla_{x}(W_{1}-W_{2})=0, \ (W_{1}-W_{2})(0,\cdot)=0. \label{equation for difference W12}    
\end{align}
Since $A$ is real valued, by taking the real and imaginary parts of  \eqref{equation for difference W12} we conclude that 
\begin{align}
\partial_{t}\Re(W_{1}-W_{2})-A\cdot \nabla_{x}\Re(W_{1}-W_{2})=0 \label{real valued part} 
\end{align}
and 
\begin{align}
\partial_{t}\Im(W_{1}-W_{2})-A\cdot  \nabla_{x}\Im(W_{1}-W_{2})=0.\label{imaginary part}     
\end{align}
Multiplying \eqref{real valued part}-\eqref{imaginary part} by $\mathrm{sgn}(\Re(W_{1}-W_{2}))$ and $\mathrm{sgn}(\Im(W_{1}-W_{2}))$ respectively we get 
\begin{align}
\partial_{t}\left\vert \Re(W_{1}-W_{2})\right\vert -A\cdot \nabla_{x}\left\vert \Re(W_{1}-W_{2})  \right\vert=0 
\label{real absolute}
\end{align}
and 
\begin{align}
\partial_{t}\left\vert \Im(W_{1}-W_{2})\right\vert-A\cdot \nabla_{x}\left\vert \Im(W_{1}-W_{2})\right\vert=0. \label{imaginary absolute}      
\end{align}
Evaluating \eqref{real absolute}-\eqref{imaginary absolute} at a maximum point of $\left\vert \Re(W_{1}-W_{2})\right\vert$ and $\left\vert \Im(W_{1}-W_{2})\right\vert$ respectively we conclude that 
\begin{align*}
\frac{\dd}{\dd t}\left\Vert \Re(W_{1}-W_{2})(t,\cdot)\right\Vert_{\infty}=\frac{\dd}{\dd t}\left\Vert \Im(W_{1}-W_{2})(t,\cdot)\right\Vert_{\infty}=0,     
\end{align*}
so that $\Re(W_{1})=\Re(W_{2}), \ \Im(W_{1})=\Im(W_{2})$. Thus $W_{1}\equiv W_{2}$, proving uniqueness. 
\end{proof}
\begin{rem}
Note that if $A$ is a \textbf{complex} valued vector field then \eqref{transport with source} becomes a \textbf{system} of linear PDEs whose well-posedness theory does not seem direct. Indeed, if $A$ is complex valued then splitting the system to real and imaginary parts we get 
\begin{align*}
\begin{cases}
\begin{array}{lc}
\partial_{t}W_{R}-A_{R}\cdot \nabla_{x}W_{R}+A_{I}\cdot \nabla_{x}W_{I}=S_{R}\\
\partial_{t}W_{I}-A_{R}\cdot \nabla_{x}W_{I}-A_{I}\cdot\nabla_{x}W_{R}=S_{I}.  
\end{array}\end{cases}
\end{align*}
It is not clear whether this system formally conserves $L^{p}$ norms and therefore we did not attempt to study this system at this level of generality, which is why  we require $A$ to be real valued. This is essentially also the reason we impose the requirement that $\eta$ is even (so that its Fourier transform is real valued).   
\end{rem}
We now apply Lemma \ref{Duhamel principle} in order to study the well-posedness theory of the Fourier-transformed equation. We designate by $L^{2}_{-s}$   the weighted $L^{2}$ space defined by $L^{2}_{-s}\coloneqq L^{2}(w_{-s}(\xi,\zeta) \dd \xi\dd \zeta)$ where $w_{-s}(\xi,\zeta)\coloneqq (1+\left\vert \xi\right\vert^{2}+\left\vert \zeta\right\vert^{2} )^{-s}$.  
\begin{cor}
Suppose that: 
\begin{itemize}
    \item $U^{0}\in C^{\infty}(\mathbb{R}^{d}_{\xi}\times \mathbb{R}_{\zeta}^{d};\mathbb{C})$ and $U^{0}(\xi,\zeta)\neq 0$ for all $(\xi,\zeta)$. 
    \item $A\in  \mathrm{Lip}(\mathbb{R}^{d}_{\xi}\times \mathbb{R}^{d}_{\zeta};\mathbb{R}^{d}\times \mathbb{R}^{d})\cap C^{\infty}(\mathbb{R}^{d}_{\xi}\times \mathbb{R}^{d}_{\zeta};\mathbb{R}^{d}\times \mathbb{R}^{d}) $.  
    \item $C\in C^{\infty}(\mathbb{R}^{d}_{\xi}\times \mathbb{R}^{d}_{\zeta};\mathbb{C})$. 
\end{itemize}
\begin{enumerate}
    \item  There exist a solution $U\in W^{1,\infty}_{\mathrm{loc}}(\mathbb{R}_{+}\times \mathbb{R}^{d}_{\xi}\times \mathbb{R}^{d}_{\zeta})$ to the Cauchy problem 
\begin{align}
\partial_{t}U(\xi,\zeta)-A(\xi,\zeta)\cdot \nabla_{\zeta}U(\xi,\zeta)=C(\xi,\zeta)U(\xi,\zeta),\ U(0,\cdot)=U^{\mathrm{0}}.  \label{corollary cauchy problem}    
\end{align}
\item Suppose in addition that: 
\begin{itemize}
\item $C\in L^{\infty}(\mathbb{R}^{d}_{\xi}\times \mathbb{R}^{d}_{\zeta})$ is real valued.
\item $\left\vert A(\xi,\zeta)\right\vert\leq \left\vert \xi\right\vert$ and $\mathrm{div}_{\zeta}A\in L^{\infty}(\mathbb{R}^{d}_{\xi}\times \mathbb{R}^{d}_{\zeta})$. 
\item $U^
{0}\in L^{2}_{-s}$. 
\end{itemize}
If $U$ is a solution to \eqref{corollary cauchy problem} with initial data $U^{0}$ then for all $t\in \mathbb{R}_{+}$ the following estimate hold for some $C=C(\left\Vert \mathrm{div}_{\zeta}A\right\Vert_{\infty},s,\left\Vert C\right\Vert_{\infty})$:  
\begin{align}
\left\Vert U(t,\cdot)\right\Vert_{L^{2}_{-s}} \leq e^{Ct}\left\Vert U^{0}\right\Vert_{L^{2}_{-s}}.      \label{L2s of U} 
\end{align}
\end{enumerate}
\label{linear transport well posed}
\end{cor}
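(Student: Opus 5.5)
For part (1), I would reduce the linear problem with zeroth-order coefficient $C$ to the source problem of Lemma \ref{Duhamel principle}. The hypothesis $U^{0}\neq 0$ points to the substitution $U=U^{0}W$: we look for a solution $W$ with $W(0,\cdot)\equiv 1$. Expanding,
\begin{align*}
\partial_{t}U-A\cdot\nabla_{\zeta}U=U^{0}\bigl(\partial_{t}W-A\cdot\nabla_{\zeta}W\bigr)-W\,A\cdot\nabla_{\zeta}U^{0}.
\end{align*}
So \eqref{corollary cauchy problem} becomes
\begin{align*}
\partial_{t}W-A\cdot\nabla_{\zeta}W=\Bigl(C+\frac{A\cdot\nabla_{\zeta}U^{0}}{U^{0}}\Bigr)W .
\end{align*}
This still has a zeroth-order term. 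To remove it, I would take the logarithm along characteristics. Since $U^{0}$ never vanishes, a smooth branch of $\log U^{0}$ exists locally, but not necessarily globally, so it is cleaner to work directly with the flow.

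Let $X(t,\xi,\zeta)$ be the flow of $\dot\zeta=-A(\xi,\zeta)$, with $\xi$ frozen as a parameter. It is global because $A$ is Lipschitz, and smooth because $A$ is smooth. Define
\begin{align*}
U(t,\xi,\zeta)=U^{0}\bigl(\xi,X(-t,\xi,\zeta)\bigr)\exp\Bigl(\int_{0}^{t}C\bigl(\xi,X(-\tau,\xi,\zeta)\bigr)\,\dd\tau\Bigr).
\end{align*}
This is the method of characteristics, and it is equivalent to applying Lemma \ref{Duhamel principle} to the equation for $\log U$, namely
\begin{align*}
\partial_{t}\log U-A\cdot\nabla_{\zeta}\log U=C
\end{align*}
with source $S=C$ and initial datum $\log U^{0}$. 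Exponentiating then gives the solution. Local Lipschitz regularity follows from the smoothness of the flow. The nonvanishing assumption is exactly what makes the log/Duhamel route legitimate; the explicit formula doesn't even need it.

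For part (2), I would run a weighted $L^{2}$ energy estimate. Since $C$ and $A$ are real, $|U|^{2}$ satisfies
\begin{align*}
\partial_{t}|U|^{2}-A\cdot\nabla_{\zeta}|U|^{2}=2C|U|^{2}.
\end{align*}
Multiply by $w_{-s}$ and integrate over $(\xi,\zeta)$, integrating by parts in $\zeta$:
\begin{align*}
\frac{\dd}{\dd t}\int|U|^{2}w_{-s}=\int|U|^{2}\bigl(2Cw_{-s}-\mathrm{div}_{\zeta}(A\,w_{-s})\bigr).
\end{align*}
The pieces are controlled as follows:
\begin{itemize}
\item The term $\mathrm{div}_{\zeta}A\,w_{-s}$ is bounded by $\|\mathrm{div}_{\zeta}A\|_{\infty}w_{-s}$.
\item For the weight derivative, $|\nabla_{\zeta}w_{-s}|=2s|\zeta|(1+|\xi|^{2}+|\zeta|^{2})^{-s-1}$. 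Combined with $|A|\le|\xi|$, this gives
\begin{align*}
|A\cdot\nabla_{\zeta}w_{-s}|\le 2s\frac{|\xi||\zeta|}{1+|\xi|^{2}+|\zeta|^{2}}\,w_{-s}\le s\,w_{-s}.
\end{align*}
The bound $|A|\le|\xi|$ is precisely what makes this ratio bounded.
\end{itemize}
Hence
\begin{align*}
\frac{\dd}{\dd t}\|U\|_{L^{2}_{-s}}^{2}\le\bigl(2\|C\|_{\infty}+\|\mathrm{div}_{\zeta}A\|_{\infty}+s\bigr)\|U\|_{L^{2}_{-s}}^{2},
\end{align*}
and Gr\"onwall yields \eqref{L2s of U}.

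The main obstacle is justifying the integration by parts, since $U$ is only $W^{1,\infty}_{\mathrm{loc}}$ and boundary terms at infinity must vanish. I would first try a cutoff $\psi_{R}(\zeta)$, estimating the commutator term by $|A|\,|\nabla\psi_{R}|\lesssim|\xi|/R$ on $\{|\zeta|\sim R\}$. If that is not clean, I would instead use the explicit formula: change variables $\zeta\mapsto X(-t,\xi,\zeta)$ with Jacobian bounded by $e^{t\|\mathrm{div}A\|_{\infty}}$, and compare $w_{-s}$ along characteristics using $|\dot\zeta|\le|\xi|$. This gives $w_{-s}(\xi,\zeta)\le e^{st}\,w_{-s}(\xi,X(-t,\xi,\zeta))$ and avoids boundary terms entirely.
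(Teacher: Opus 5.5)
Your proposal is correct. Part (2) is essentially the paper's argument: the same weighted energy identity, with the same key bound $|A\cdot\nabla_\zeta w_{-s}|\le s\,w_{-s}$ coming from $|A|\le|\xi|$.

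Part (1) is organized differently. The paper sets $U=U^{0}e^{\widetilde W}$, where $\widetilde W$ solves the source problem of Lemma \ref{Duhamel principle} with zero datum and source $C+A\cdot\nabla_\zeta U^{0}/U^{0}$. Nonvanishing of $U^{0}$ is used only to make this source smooth, and only the globally defined quotient $\nabla_\zeta U^{0}/U^{0}$ enters, never a branch of $\log U^{0}$. You write the characteristic formula directly. This bypasses the Duhamel lemma (itself built on the same flow), gives a $C^{\infty}$ solution at once, and shows that the nonvanishing hypothesis is not needed for existence.

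In part (2) you also improve on the paper in two ways. First, the identity $\partial_t|U|^2-A\cdot\nabla_\zeta|U|^2=2C|U|^2$ handles complex $U$ directly; the paper treats only real $U^{0}$ and leaves the complex case implicit. Second, your change of variables, with Jacobian $\le e^{t\Vert\mathrm{div}_\zeta A\Vert_\infty}$ and $w_{-s}(\xi,\zeta)\le e^{st}w_{-s}(\xi,X(-t,\xi,\zeta))$, gives the same constant with no boundary terms at infinity. The paper's formal integration by parts does not address those boundary terms.

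Three small corrections.

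\textbf{The logarithm.} A global smooth branch of $\log U^{0}$ does exist, since $\mathbb{R}^{2d}$ is simply connected and $U^{0}$ maps into $\mathbb{C}\setminus\{0\}$. This is harmless because you don't use it.

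\textbf{The cutoff route.} As sketched, it does not close. With a cutoff in $\zeta$ alone, $|\xi|/R$ is not small uniformly in $\xi$. With a joint cutoff in $(\xi,\zeta)$, the commutator is only $O(1)$ on the annulus, so letting $R\to\infty$ presupposes $\int_0^t\Vert U(\tau)\Vert_{L^2_{-s}}^2\,\dd\tau<\infty$, which is what you are proving. Rely on the characteristic route instead.

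\textbf{Uniqueness.} The estimate is asserted for an arbitrary solution $U$, so the characteristic route needs uniqueness. Setting $V(t,\zeta')=U(t,\xi,X(t,\xi,\zeta'))$, the function $V$ is locally Lipschitz with $\partial_t V=C(\xi,X(t,\xi,\zeta'))\,V$ a.e. Hence every $W^{1,\infty}_{\mathrm{loc}}$ solution coincides with your formula, with no growth assumptions needed.
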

\begin{proof}
\textbf{Step 1. Existence.} Since $U^
{0}(\xi,\zeta)\neq 0$ for all $(\xi,\zeta)$ we have $\nabla_{\zeta}\log U^{0}\in C^{\infty}(\mathbb{R}^{d}_{\xi}\times \mathbb{R}^{d}_{\zeta};\mathbb{C})$. Let $\widetilde{W}\in W^{1,\infty}_{\mathrm{loc}}(\mathbb{R}_{+}\times \mathbb{R}^{d}_{\xi}\times \mathbb{R}^{d}_{\zeta})$ be a solution of 
\begin{align*}
\partial_{t}\widetilde{W}-A\cdot \nabla_{\zeta}\widetilde{W}=C+A\cdot\nabla_{\zeta}\log(U^{0}), \ \widetilde{W}(0,\cdot)=0      
\end{align*}
as guaranteed via Lemma \ref{Duhamel principle}.  
Define  $U(t,\xi,\zeta)\coloneqq U^{0}(\xi,\zeta)e^{\widetilde{W}(t,\xi,\zeta)}$. We compute: 
\begin{align*}
\partial_{t}U-A\cdot\nabla_{\zeta}U&=U^{0}e^{\widetilde{W}}\partial_{t}\widetilde{W}-A\cdot \nabla_{\zeta}(U^{0}e^{\widetilde{W}})\\
&=U^{0}e^{\widetilde{W}}\left(\partial_{t}\widetilde{W}-A\cdot \nabla_{\zeta}\widetilde{W}\right)-A\cdot \nabla_{\zeta}U^{0}e^{\widetilde{W}}\\
&= U^{0}e^{\widetilde{W}}(C+A\cdot\nabla_{\zeta}\log(U^{0}))-A\cdot \nabla_{\zeta}U^{0}e^{\widetilde{W}}\\
&=U^{0}e^{\widetilde{W}}C=UC. 
\end{align*}
\textbf{Step 2. The estimate \eqref{L2s of U} .} 
To establish \eqref{L2s of U}  we assume first that $U^{0}$ is real valued, so that $U(t,\cdot)$ is also real valued. 
Multiplying \eqref{corollary cauchy problem} by $w_{-s}(\xi,\zeta)U$ we get 
\begin{align}
\partial_{t}(w_{-s}(\xi,\zeta)\left\vert U\right\vert^{2})&-\frac{1}{2}A\cdot \nabla_{\zeta}(w_{-s}(\xi,\zeta)\left\vert U\right\vert^
{2}) \notag\\
&+\frac{1}{2}A\cdot\nabla_{\zeta}(w_{-s}(\xi,\zeta))\left\vert U\right\vert^{2}=C(\xi,\zeta)w_{-s}(\xi,\zeta)\left\vert U\right\vert^{2}.\label{eq for weighted U2}       
\end{align}
Therefore, integrating in $(\xi,\zeta)$ we obtain 
\begin{align*}
\frac{\dd}{\dd t}\left\Vert U(t,\cdot)\right\Vert_{L^{2}_{-s}}^{2} =&-\frac{1}{2}\int_{\mathbb{R}^{d}\times \mathbb{R}^{d}}\mathrm{div}_{\zeta}(A)(\xi,\zeta)w_{-s}(\xi,\zeta)\left\vert U\right\vert^{2}(t,\xi,\zeta)\ \dd \xi\dd \zeta\\
&-\frac{1}{2}\int_{\mathbb{R}^{d}\times \mathbb{R}^{d}}A\cdot \nabla_{\zeta}w_{-s}(\xi,\zeta)\left\vert U \right\vert^{2}(t,\xi,\zeta) \ \dd \xi \dd \zeta
\\
&+\int_{\mathbb{R}^{d}\times \mathbb{R}^{d}}C(\xi,\zeta)w_{-s}(\xi,\zeta)\left\vert U\right\vert^{2}(t,\xi,\zeta)\ \dd \xi\dd \zeta.   
\end{align*}
By assumption $\left\vert A(\xi,\zeta)\right\vert\leq \left\vert \xi\right\vert$ and therefore  
\begin{align*}
\left\vert A\cdot \nabla_{\zeta}w_{-s}(\xi,\zeta)\right\vert =2s\left\vert A\cdot \frac{\zeta}{(1+\left\vert \xi\right\vert^{2}+\left\vert \zeta\right\vert^{2})^{s}}\right\vert \leq 2s\left\vert \frac{\xi\cdot \zeta}{(1+\left\vert \xi\right\vert^{2}+\left\vert \zeta\right\vert^{2})^{s+1} }\right\vert\leq sw_{-s}(\xi,\zeta).      
\end{align*}
Consequently, it follows that 
\begin{align}
\frac{\dd}{\dd t}\left\Vert U(t,\cdot)\right\Vert_{L^{2}_{-s}}^{2}&\leq \left\Vert \mathrm{div}_{\zeta}A\right\Vert_{\infty} \left\Vert U(t,\cdot)\right\Vert_{L^{2}_{-s}}^{2} +s\left\Vert U(t,\cdot)\right\Vert_{L^{2}_{-s}}^{2} 
+\left\Vert C\right\Vert_{\infty}\left\Vert U(t,\cdot)\right\Vert_{L^{2}_{-s}}^
{2} \notag\\
&\leq (\left\Vert \mathrm{div}_{\zeta}A\right\Vert_{\infty} +s+\left\Vert C\right\Vert_{\infty})\left\Vert U(t,\cdot)\right\Vert_{L^{2}_{-s}}^{2}. \label{first gronwall type} \end{align}
We therefore conclude that 
\begin{align*}
\left\Vert U(t,\cdot)\right\Vert_{L^{2}_{-s}}\leq e^{(\left\Vert \mathrm{div}_{\zeta}A\right\Vert_{\infty}+s+\left\Vert C\right\Vert_{\infty})t}\left\Vert U^{0}\right\Vert_{L^{2}_{-s}}.       
\end{align*}
\end{proof}

\begin{thm}
\textup{(\textbf{Distributional solutions for nonlocal free transport})}\label{Measure valued solution to Liouville eq}\\
Suppose that:  
\begin{itemize}
    \item $u^{0}\in H^{-s}(
    \mathbb{R}^{d}_{x}\times \mathbb{R}^{d}_{v})\cap \mathcal{M}_{c}(\mathbb{R}^{d}_{x}\times \mathbb{R}^{d}_{v})$ for some $s>\frac{d}{2}$. Moreover  $\widehat{u}^{0}(\xi,\zeta)\neq 0$ for all $(\xi,\zeta)$. 
    \item $\eta\in \mathcal{S}(\mathbb{R}^{d}_{x}\times \mathbb{R}^{d}_{v})$ is even , i.e. $\eta(x,v)=\eta(-x,-v)$.  
\end{itemize}
\begin{enumerate}
    \item There exist a  distributional solution $u\in L^{\infty}([0,T];H^{-s}(\mathbb{R}^{d}\times \mathbb{R}^{d}))$ (in the sense of Definition \ref{Defintion measure valued for nonlocal}) to the  Cauchy problem
\begin{align}
\partial_{t}u+v\cdot \nabla_{x}(\eta\ast u)=0, \ u(0,\cdot)=u^{0}.  \label{Liouville eq}  \end{align}
\item If $u,\widetilde{u}\in L^{\infty}([0,T];H^{-s}(\mathbb{R}^{d}\times \mathbb{R}^{d}))$ are distributional solutions of \eqref{Liouville eq} with the same initial data $u^{0}$ then $u\equiv \widetilde{ u}$.
\end{enumerate}
\end{thm}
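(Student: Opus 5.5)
The plan is to pass to the Fourier side, where \eqref{Liouville eq} becomes a first-order linear transport equation in the frequency variable $\zeta$ with real coefficients. Corollary \ref{linear transport well posed} will then give existence and the $L^{2}_{-s}$ bound, and a duality argument along characteristics will give uniqueness. Write $\widehat{f}(\xi,\zeta)$ for the Fourier transform in $(x,v)$.

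\textbf{Step 1: the Fourier-side equation.} The derivative $\nabla_{x}$ becomes multiplication by $i\xi$, and multiplication by $v_{j}$ becomes $i\partial_{\zeta_{j}}$ (up to the sign convention). Hence formally $\widehat{v\cdot\nabla_{x}(\eta\ast u)}=-\xi\cdot\nabla_{\zeta}(\widehat{\eta}\,\widehat{u})$, and \eqref{Liouville eq} becomes
\begin{align*}
\partial_{t}U-A\cdot\nabla_{\zeta}U=C\,U,\qquad A(\xi,\zeta)\coloneqq \widehat{\eta}(\xi,\zeta)\,\xi,\qquad C(\xi,\zeta)\coloneqq \xi\cdot\nabla_{\zeta}\widehat{\eta}(\xi,\zeta),
\end{align*}
with $U(0,\cdot)=\widehat{u}^{0}$; the overall sign depends on the convention and is immaterial.

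\textbf{Step 2: checking the hypotheses of Corollary \ref{linear transport well posed}.} Since $\eta$ is even and real, $\widehat{\eta}$ is real. So $A$ and $C$ are real-valued, which is exactly why evenness is assumed. Since $\eta\in\mathcal{S}$, $\widehat{\eta}\in\mathcal{S}$. Hence $A$ is smooth with bounded gradient $\widehat{\eta}\,\mathrm{Id}+\xi\otimes\nabla\widehat{\eta}$, so $A$ is Lipschitz. Moreover $C=\mathrm{div}_{\zeta}A$ is bounded and real.

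The only mismatch is that $|A|\le\|\widehat{\eta}\|_{\infty}|\xi|\le\|\eta\|_{1}|\xi|$, whereas the corollary assumes $|A|\le|\xi|$. I will simply rerun the weight computation in Step 2 of the corollary's proof with the constant $\|\eta\|_{1}$, which only changes the exponential rate.

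Because $u^{0}\in\mathcal{M}_{c}$, its transform $\widehat{u}^{0}$ is smooth (indeed entire), and it is nonvanishing by assumption. Also $\widehat{u}^{0}\in L^{2}_{-s}$ because $u^{0}\in H^{-s}$.

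The $L^{2}_{-s}$ estimate in the corollary was proved for real data. For complex $U$ I will redo the energy identity with $|U|^{2}=U\overline{U}$, which is legitimate because $A$ and $C$ are real, so conjugation commutes with the equation. This gives $\|U(t)\|_{L^{2}_{-s}}\le e^{Ct}\|\widehat{u}^{0}\|_{L^{2}_{-s}}$.

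Setting $u(t)\coloneqq\mathcal{F}^{-1}U(t)$, Plancherel then yields $u\in L^{\infty}([0,T];H^{-s})$.

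\textbf{Step 3: $u$ is a distributional solution.} Take $\varphi\in C^{\infty}_{0}((-a,T)\times\mathbb{R}^{2d})$. The pairing $\langle\psi,u\rangle$ equals $(2\pi)^{-2d}\int\overline{\widehat{\psi}}\,U$ (up to conjugation conventions). I multiply the classical (a.e., $W^{1,\infty}_{\mathrm{loc}}$) Fourier-side equation by $\overline{\widehat{\varphi}}$ and integrate over $[0,T]\times\mathbb{R}^{2d}$.

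Integrating by parts in $t$ and $\zeta$ is justified because $\widehat{\varphi}$ is Schwartz in $(\xi,\zeta)$, while $U$ lies in $L^{2}_{-s}$ uniformly in time with polynomially bounded coefficients. This produces the Fourier image of each term in \eqref{weak formulation for nonlocal}, where the convolution is with $\overline{\eta}=\eta$, and the boundary term gives $\langle\varphi(0),u^{0}\rangle$.

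\textbf{Step 4: uniqueness, the main obstacle.} Let $w=u-\widetilde{u}$, so $w$ has zero initial data. Then $W=\widehat{w}\in L^{\infty}_{t}L^{2}_{-s}$ is only a distributional solution of the Fourier-side equation with $W(0)=0$. The maximum-principle argument of Lemma \ref{Duhamel principle}(3) needs $W^{1,\infty}$ regularity, so it does not apply directly.

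I would argue by duality instead. Fix $t_{0}\le T$ and $g\in C^{\infty}_{c}(\mathbb{R}^{2d};\mathbb{C})$, and solve the backward adjoint problem
\begin{align*}
\partial_{t}\Psi-A\cdot\nabla_{\zeta}\Psi+(C-\mathrm{div}_{\zeta}A)\Psi=0,\qquad \Psi(t_{0})=g,
\end{align*}
explicitly along the characteristics $\dot{\zeta}=-A(\xi,\zeta)$ with $\xi$ frozen. Here $C-\mathrm{div}_{\zeta}A=0$, so $\Psi$ is just $g$ transported along the characteristics. Since $A$ is Lipschitz and $|A|\lesssim|\xi|$, the flow moves $\zeta$ by at most $O(T|\xi|)$, so $\Psi$ stays smooth and compactly supported in $(\xi,\zeta)$ on $[0,t_{0}]$.

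Testing the weak formulation for $w$, transported to the Fourier side as in Step 3, against $\mathcal{F}^{-1}\Psi$ gives $\int g\,W(t_{0})=0$. I would try first a time cutoff near $t_{0}$, using that $t\mapsto\langle\varphi,w(t)\rangle$ is Lipschitz as in the uniqueness proof of Theorem \ref{measure valued solutions existence and uniqueness}. Since $g$ is arbitrary, $W(t_{0})=0$, hence $w\equiv0$.

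The delicate points are that $\mathcal{F}^{-1}\Psi$ is not compactly supported in $(x,v)$, only Schwartz. So I need a density argument extending the weak formulation to Schwartz test functions, which is possible since $u\in H^{-s}$ and the operator $\varphi\mapsto\mathrm{div}_{x}(\eta\ast(v\varphi))$ maps $H^{s+1}$-weighted spaces into $H^{s}$. This loss of one derivative, from multiplication by $v$ turning into $\nabla_{\zeta}$, is presumably where the stronger requirement $s>1+\frac{d}{2}$ of Theorem \ref{Free transport limit} enters.
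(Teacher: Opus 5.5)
Your argument is correct. Uniqueness is essentially the paper's duality argument, but your existence proof takes a different and more direct route.

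\textbf{Existence.} The paper first treats Schwartz data. It then mollifies $u^{0}$ with a kernel whose Fourier transform never vanishes, so that the logarithm trick of Corollary \ref{linear transport well posed} still applies. It derives a $\delta$-uniform $L^{2}_{-s}$ bound, extracts a weak limit with Lemma \ref{BanachAl Lemma}, and passes to the limit in the weak formulation. You instead observe that $\widehat{u}^{0}$ is already smooth and nonvanishing because $u^{0}$ has compact support. You apply the corollary to it directly and check the weak formulation by testing the Fourier-side equation against $\overline{\widehat{\varphi}}$. This removes the approximation and compactness step entirely and gives the solution explicitly as $\mathcal{F}^{-1}U$.

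The price is that you must justify integrating by parts against test functions that are Schwartz but not compactly supported in $(\xi,\zeta)$, while $U$ is only in $W^{1,\infty}_{\mathrm{loc}}\cap L^{\infty}_{t}L^{2}_{-s}$. A cutoff in $(\xi,\zeta)$ plus dominated convergence, using $|A|\lesssim|\xi|$, handles this. The paper's Step 1 carries the same burden implicitly.

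Two further points in your Step 2 fill gaps the paper passes over. First, you treat complex $U$ through $|U|^{2}=U\overline{U}$, which is legitimate since $A$ and $C$ are real; the corollary's estimate is proved only for real data. Second, you carry the constant in $|A|\le\Vert\widehat{\eta}\Vert_{\infty}|\xi|$, whereas the corollary assumes $|A|\le|\xi|$, which need not hold here.

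\textbf{Uniqueness.} Both arguments rest on the adjoint being a pure transport equation, because $C=\mathrm{div}_{\zeta}A$. The paper solves it with a source $\Psi$ and vanishing terminal data. This gives $\int_{0}^{T}\langle\Psi,\widehat{u}\rangle\,\dd\tau=0$ for all $\Psi$ without any time regularity of $u$. Your homogeneous backward problem from $g$ at time $t_{0}$ instead needs the weak time-continuity step, which you correctly supply via the Lipschitz property of $t\mapsto\langle\varphi,w(t)\rangle$.

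\textbf{Closing remark.} The extension of the weak formulation to Schwartz test functions, which the paper uses tacitly, works for every $s$, since $\varphi\mapsto\mathrm{div}_{x}(\eta\ast(v\varphi))$ is continuous on $\mathcal{S}$. So your closing guess is off: the condition $s>1+\frac{d}{2}$ in Theorem \ref{Free transport limit} does not come from well-posedness. It comes from the bound $|\xi|^{2}w_{-s}\le w_{-(s-1)}\in L^{1}$ used in estimating $I_{2}$ and $I_{3}$.
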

\begin{proof}
The idea underpinning the proof is to consider the equation obtained by taking the Fourier transform of \eqref{Liouville eq}, which has the same form as \eqref{corollary cauchy problem}. \\
\textbf{Step 1. Schwartz initial data.} Let $u^{0}\in \mathcal{S}(\mathbb{R}^{d}_{x}\times \mathbb{R}^{d}_{v})$ be such that $\widehat{u}^{0}$ is everywhere non-vanishing. We consider the following linear Cauchy problem 
\begin{align}
\partial_{t}U(\xi,\zeta)-\widehat{\eta}(\xi,\zeta)\xi\cdot \nabla_{\zeta}U(\xi,\zeta)=\xi\cdot \nabla_{\zeta}\widehat{\eta} (\xi,\zeta) U(\xi,\zeta),  \ U(0,\cdot)=\widehat{u}^{0}. \label{equation for Fourier transform} 
\end{align}
Since $\eta$ is even and $\eta \in \mathcal{S}(\mathbb{R}^{d}_{x}\times \mathbb{R}^{d}_{v})$ it follows that $\widehat{\eta}$ is real valued and that  $\widehat{\eta}\in \mathcal{S}(\mathbb{R}^{d}_{\xi}\times \mathbb{R}^{d}
_{\zeta})$. Therefore we can apply Corollary \ref{linear transport well posed} with $A(\xi,\zeta)=\widehat{\eta}(\xi,\zeta)(\mathbf{0},\xi)$ and $C(\xi,\zeta)=\xi\cdot \nabla_{\zeta}\widehat{\eta}(\xi,\zeta)$ in order to deduce the existence of a solution $U$ to \eqref{equation for Fourier transform}. Set  $u(t,x,v)\coloneqq \mathcal{F}^{-1}_{(\xi,\zeta)}(U(t,\cdot))(x,v)$ and notice  the identity 
\begin{align*}
\mathcal{F}^{-1}_{(\xi,\zeta)}(\widehat{\eta}(\xi,\zeta)\xi\cdot \nabla_{\zeta}U(\xi,\zeta)+\xi\cdot \nabla_{\zeta}\widehat{\eta}(\xi,\zeta)U(\xi,\zeta))(x,v)=-v\cdot \nabla_{x}(\eta\ast u)(x,v). \end{align*}
Therefore, we infer that 
\begin{align*}
\partial_{t}u+v\cdot \nabla_{x}(\eta\ast u)=0,\ u(0,\cdot)=u^{0}.     
\end{align*}
\textbf{Step 2. General initial data}. Now consider initial data $u
^{0}\in H^{-s}(\mathbb{R}^{d}_{x}\times \mathbb{R}^{d}_{v})\cap \mathcal{M}_{c}(\mathbb{R}^{d}_{x}\times \mathbb{R}^{d}_{v})$ such that $\widehat{u}^{0}$ is everywhere non-vanishing. Fix a family of standard mollifiers $\{\chi_{\delta}\}_{\delta>0}\subset \mathcal{S}(\mathbb{R}^{d}_{x}\times \mathbb{R}^{d}_{v})$ such that $\widehat{\chi}_{\delta}$ is everywhere non-vanishing. Consequently, $\chi_{\delta}\ast u^{0}\in \mathcal{S}(\mathbb{R}^{d}_{x}\times \mathbb{R}^{d}_{v})$ and  $\widehat{\chi_{\delta}\ast u_{0}}=\widehat{\chi}_{\delta}\widehat{u}^{0}$ is non-vanishing. For each $\delta>0$ let  $u^{\delta}$ be a solution of the regularized problem 
\begin{align}
\partial_{t}u^{\delta}+v\cdot\nabla_{x}(\eta\ast u^{\delta})=0, \ u^{\delta}(0,\cdot)=\chi_{\delta}\ast u^{0},\label{Regulazrized system liouville}    
\end{align}
whose existence is guaranteed thanks to step 1. Taking the Fourier transform of \eqref{Regulazrized system liouville} and setting $U^{\delta}\coloneqq \widehat{u}^{\delta}$ we get 
\begin{align*}
\partial_{t}U^{\delta}-\widehat{\eta}(\xi,\zeta)\xi\cdot \nabla_{\zeta}U^{\delta}=\xi\cdot \nabla_{\zeta}\widehat{\eta}(\xi,\zeta)U^{\delta}(\xi,\zeta), \ U^{\delta}(0,\cdot)\coloneqq   \widehat{\chi}(\delta\cdot )\widehat{u}^{0}.    
\end{align*}
By Lemma \ref{linear transport well posed} there is some $C>0$ (independent of $\delta$) such that 
\begin{align*}
\left\Vert u^{\delta}\right\Vert_{L^{\infty}([0,T];H^{-s})} =\underset{t\in [0,T]}{\sup}\left\Vert U^{\delta}(t,\cdot)\right\Vert_{L^{2}_{-s}
}\leq C\left\Vert U^{\delta}(0,\cdot)\right\Vert_{L^{2}_{-s}}\leq C\left\Vert u^{0}\right\Vert_{H^{-s}}.   
\end{align*}
By Lemma \ref{BanachAl Lemma} there exists a subsequence $u^{\delta}$ (not relabeled) and some $u\in L^{\infty}([0,T];H^{-s}(\mathbb{R}^{d}_{x}\times \mathbb{R}^{d}_{v}))$ such that $u^{\delta}\underset{\delta \rightarrow 0}{\rightharpoonup}u$ weakly in $L^{2}([0,T];H^{-s})$. For any $\varphi \in C^{\infty}_{0}((-a,T)\times \mathbb{R}^{d}_{x}\times \mathbb{R}^{d}_{v})$ it holds that  
\begin{align}
\int_{0}^{T}\langle \partial_{\tau}\varphi(\tau,\cdot),u^{\delta}(\tau,\cdot)\rangle+\langle \mathrm{div}_{x}(\eta\ast (v\varphi)),u^{\delta}(\tau,\cdot)\rangle \ \dd \tau+\langle \varphi(0,\cdot),\chi_{\delta}\ast u^{0}\rangle=0.     
\label{tested equation}
\end{align}
Since $\eta\ast \mathrm{div}_{x}(v\varphi)\in L^{2}([0,T];H^{s}(\mathbb{R}^{d}_{x}\times \mathbb{R}^{d}_{v}))$ we can pass to the limit as $\delta \rightarrow 0$ in \eqref{tested equation}  
and conclude that $u$ is a distributional solution of \eqref{Liouville eq}.\\ 
\textbf{Step 3. Uniqueness.} 
It suffices to show that if  
$u$ is a measure valued solution with initial data $u(0,\cdot)\equiv0$ then $u\equiv 0$ for all times.  For all $\varphi\in C^{\infty}_{0}((-a,T);\mathcal{S}(\mathbb{R}^{d}\times \mathbb{R}^{d};\mathbb{C}))$ it holds that 
\begin{align*}
\int_{0}^{T}\langle \partial_{\tau}\overline{\varphi}(\tau,\cdot),u(\tau,\cdot)\rangle\ \dd \tau+\int_{0}^{T}\langle \mathrm{div}_{x}(\eta\ast (v\overline{\varphi})), u(\tau,\cdot)\rangle \ \dd \tau=0.    
\end{align*}
Passing to Fourier we get  
\begin{align*}
\int_{0}^{T}\langle \partial_{\tau}\overline{\widehat{\varphi}}(\tau,\cdot),\widehat{u}(\tau,\cdot)\rangle\ \dd \tau+\int_{0}^{T}\langle \widehat{\eta}\xi\cdot \nabla_{\zeta}\overline{\widehat{\varphi}},\widehat{u}(\tau,\cdot)\rangle\ \dd \tau=0.      
\end{align*}
Since the Fourier transform is a bijection from $\mathcal{S}(\mathbb{R}^{d}\times \mathbb{R}^{d};\mathbb{C})$ to itself it follows that for all $\psi\in C^{\infty}_{0}((-a,T);\mathcal{S}(\mathbb{R}^{d}\times \mathbb{R}^{d};\mathbb{C}))$
it holds that 
\begin{align*}
\int_{0}^{T}\langle \partial_{\tau}\psi(\tau,\cdot),\widehat{u}(\tau,\cdot)\rangle\ \dd \tau+\int_{0}^{T}\langle \widehat{\eta}\xi\cdot \nabla_{\zeta}\psi,\widehat{u}(\tau,\cdot)\rangle\ \dd \tau=0.      
\end{align*}
For any $\Psi\in  C^{\infty}_{0}((-a,T)\times \mathbb{R}^{d}\times \mathbb{R}^{d})$ we may find $\psi\in C^{\infty}_{0}((-a,T); \mathcal{S}(\mathbb{R}^{d}\times \mathbb{R}^{d}))$ such that $\partial_{\tau}\psi+\widehat{\eta}\xi\cdot \nabla_{\zeta}\psi=\Psi$ and therefore we conclude that for all $\Psi\in C^{\infty}_{0}((-a,T)\times \mathbb{R}^{d}\times \mathbb{R}^{d})$ it holds that 
\begin{align*}
\int_{0}^{T}\langle \Psi(\tau,\cdot),\widehat{u}(\tau,\cdot)\rangle\ \dd \tau=0      
\end{align*}
and so we conclude that $\widehat{u}\equiv 0$ and therefore $u\equiv 0$, as desired. 
\end{proof}
\section{Proof of Theorem \ref{Constant transport limit} and Theorem \ref{Free transport limit}}\label{Proof of main theorems sec}
In this section we continue to make extensive use of the Fourier approach in order to prove the nonlocal to local limit as stated in Theorem \ref{Constant transport limit} and Theorem \ref{Free transport limit}. We will need the following lemma regarding uniqueness of a certain distributional logistic equation.  
\begin{lem} \label{Uniqueness lemma}
Suppose that: 
\begin{itemize}
    \item $f\in \mathcal{S}(\mathbb{R}^{d};\mathbb{C})$ is such that $(\Re f+1)^{2}(\xi)+(\Im f)^{2}(\xi)> 0$ for a.e. $\xi\in \mathbb{R}^{d}$. 
    \item $U^{0}\in L^{2}_{\mathrm{loc}}(\mathbb{R}^{d};\mathbb{C})$.
\end{itemize}
Let $U\in L^{\infty}
([0,T];L^{2}_{\mathrm{loc}}(\mathbb{R}^{d};\mathbb{C}))$ be such that 
\begin{align}
\int_{0}^{T}\langle \partial_{\tau}\psi,U\rangle+\langle \psi, fU \rangle\ \dd \tau+\langle U^{0},\psi(0,\cdot)\rangle =0\ \mbox{for all}\ \psi\in C^{\infty}_{0}((-a,T);\mathcal{S}(\mathbb{R}^{d})). 
\label{logistic distrubtional equation}
\end{align}
Then, there is a unique solution to \eqref{logistic distrubtional equation} given by   $U(t,\xi)=e^{ft}U^{0}(\xi)$. 
\end{lem}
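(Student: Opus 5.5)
The plan has two parts: first show that $U(t,\xi)=e^{tf(\xi)}U^{0}(\xi)$ satisfies \eqref{logistic distrubtional equation}, then show that the difference of any two solutions vanishes. The equation contains no derivatives in $\xi$, so after localizing in $\xi$ it is just the linear ODE $\partial_t U = fU$, read in the sense of distributions in time. The pairing $\langle\cdot,\cdot\rangle$ is taken to be bilinear (not conjugated). If it were conjugate-linear, one would replace $f$ by $\overline{f}$ in what follows; nothing else changes.

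\textbf{Existence.} Since $f\in\mathcal{S}$, it is bounded, so $e^{tf}U^{0}\in L^{\infty}([0,T];L^{2}_{\mathrm{loc}})$. Fix a test function $\psi$. For a.e. $\xi$, the function $\tau\mapsto \psi(\tau,\xi)e^{\tau f(\xi)}$ is smooth and compactly supported in $(-a,T)$, so
\[
\int_{0}^{T}\big(\partial_\tau\psi+f\psi\big)e^{\tau f}\,\dd\tau=\int_0^T\partial_\tau\big(\psi e^{\tau f}\big)\,\dd\tau=-\psi(0,\xi).
\]
Multiply by $U^{0}(\xi)$, integrate in $\xi$, and use Fubini. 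This is justified because $\psi$ is Schwartz in $\xi$ uniformly in $\tau$ and $U^{0}\in L^{2}_{\mathrm{loc}}$; if needed, first take $\psi$ compactly supported in $\xi$ and argue by density. The result is exactly \eqref{logistic distrubtional equation}.

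\textbf{Uniqueness.} Let $V$ be the difference of two solutions. Then $V$ satisfies \eqref{logistic distrubtional equation} with $U^{0}=0$. Take $\psi(\tau,\xi)=\chi(\tau)\phi(\xi)e^{-\tau f(\xi)}$ with $\chi\in C^{\infty}_{0}((-a,T))$ and $\phi\in C^{\infty}_{0}(\mathbb{R}^{d})$. This is admissible because $e^{-\tau f}$ is smooth and bounded together with its derivatives on compact sets, and $\phi$ provides the compact support in $\xi$. Then $\partial_\tau\psi+f\psi=\chi'(\tau)\phi e^{-\tau f}$, so the identity reads
\[
\int_0^T\chi'(\tau)\,g(\tau)\,\dd\tau=0,\qquad g(\tau):=\langle \phi e^{-\tau f},V(\tau)\rangle .
\]
We have $g\in L^{\infty}(0,T)$. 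Choosing $\chi\in C^\infty_0((0,T))$ shows that $g$ is a.e. constant. Choosing $\chi$ with $\chi(0)=1$ then shows that the constant is $0$, since the boundary term carries $U^{0}=0$. To make this rigorous, I would take $\chi$ close to $\mathbf 1_{[0,t]}$ and use that $g$ is constant.

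This gives $\langle \phi e^{-\tau f},V(\tau)\rangle=0$ for a.e. $\tau$ and every $\phi$. The main obstacle is that the exceptional null set of $\tau$ may depend on $\phi$. I would handle it by letting $\phi$ range over a countable dense subset of $C^{\infty}_{0}$; multiplication by $e^{-\tau f}$ is a bijection on $C_0^\infty$, which is why this class of test functions suffices. It follows that $e^{-\tau f}V(\tau)=0$ in $L^{2}_{\mathrm{loc}}$ for a.e. $\tau$, and hence $V=0$.

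Finally, the nondegeneracy hypothesis $(\Re f+1)^2+(\Im f)^2>0$ is not needed in this argument. In the intended application it presumably ensures that the relevant multiplier does not vanish, but it plays no role in the uniqueness proof above.
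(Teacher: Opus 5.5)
Your proof is correct. The existence half is the same direct verification as the paper's Step 2, but your uniqueness argument takes a different route.

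For $U^{0}=0$ the paper takes $\psi$ solving $\partial_\tau\psi+\psi=\Phi$ backward from $T$. It asserts that the weak formulation then gives $\int_0^T\langle\Phi,U+fU\rangle\,\dd\tau=0$ for every $\Phi$, hence $(1+f)U=0$ a.e. It concludes by inverting the real $2\times 2$ system with determinant $(\Re f+1)^2+(\Im f)^2$, and that inversion is the only place the nondegeneracy hypothesis enters.

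You instead build the integrating factor into the test function, $\psi=\chi(\tau)\phi(\xi)e^{-\tau f(\xi)}$. This solves the actual adjoint equation $\partial_\tau\psi+f\psi=\chi'\phi e^{-\tau f}$, so the zeroth-order term cancels exactly. Everything then reduces to the scalar $g(\tau)=\langle\phi,e^{-\tau f}V(\tau)\rangle$ having zero distributional derivative and zero initial value. This buys you two things. No pointwise division by $1+f$ is needed, so the hypothesis drops out. And for uniqueness you only need $f$ smooth, since the compact support of $\phi$ absorbs any growth of $f$.

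It is also the route that actually closes. Substituting $\partial_\tau\psi=\Phi-\psi$ into the weak formulation gives $\int_0^T\langle\Phi,U\rangle+\langle\psi,(f-1)U\rangle\,\dd\tau=0$, not $\int_0^T\langle\Phi,U+fU\rangle\,\dd\tau=0$. So the paper's step needs $\psi$ to solve $\partial_\tau\psi+f\psi=\Phi$ instead; your test function is the special case $\Phi=\chi'\phi e^{-\tau f}$. With that choice one gets $\int_0^T\langle\Phi,U\rangle\,\dd\tau=0$ for all $\Phi$, i.e.\ $U=0$, with no condition on $f$. This confirms your final remark that the nondegeneracy hypothesis plays no role.

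Two small simplifications. First, no approximation of $\mathbf{1}_{[0,t]}$ is needed: if $g\equiv c$ on $(0,T)$ and $\chi(0)=1$, then $\int_0^T\chi' c\,\dd\tau=-c$ exactly. Second, the bijection remark is superfluous, since $\langle\phi e^{-\tau f},V(\tau)\rangle=\langle\phi,e^{-\tau f}V(\tau)\rangle$ and a countable family dense in every $L^{2}(K)$ already suffices.
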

\begin{proof}
\textbf{Step 1. Uniqueness.} First, we prove uniqueness for \eqref{logistic distrubtional equation}. By linearity it is enough to prove that if $U\in L^{\infty}([0,T];L^{2}_{\mathrm{loc}}(\mathbb{R}^{d};\mathbb{C}))$ is a solution with initial data $U^{0}\equiv 0$ then $U\equiv 0$ for all times. Given $\Phi\in C^{\infty}_{0}((-a,T);\mathcal{S}(\mathbb{R}^{d}))$ define  
\begin{align*}
\psi(\tau,x)=e^{-\tau}\int^{\tau}_{0}e^{s}\Phi(s,x)\ \dd s-e^{-\tau}\int_{0}^{T}e^{s}\Phi(s,x)\ \dd s.    
\end{align*}
By direct calculation it is verified  that 

\begin{align*}
\partial_{\tau}\psi+\psi=\Phi \ \mbox{on}\ [0,T).    
\end{align*}
Moreover it holds that $\psi(T,\cdot)\equiv 0.$ Finally, we can extend $\psi$ smoothly to $(-a,T)$ so that $\psi(t,\cdot)$ vanishes near $-a$.    
Therefore by \eqref{logistic distrubtional equation} we conclude that for all $\Phi\in C^{\infty}_{0}((-a,T);\mathcal{S}(\mathbb{R}^{d}))$ it holds that 
\begin{align*}
\int_{0}^{T}\langle \Phi,U+fU \rangle\ \dd \tau=0.     
\end{align*}
Consequently it follows that  
\begin{align}
U+fU=0 \ \mbox{for  a.e}\ (t,\xi)\in [0,T]\times \mathbb{R}^{d}.  \label{UfU}  
\end{align}
We abbreviate $U_{R}=\Re U, U_{I}=\Im U$ and similarly for $f$. Taking the real part and imaginary parts of \eqref{UfU} we arrive at the following set of equations: 
\begin{align}
\begin{cases}
\begin{array}{lc}
U_{R}-f_{I}U_{I}+f_{R}U_{R}=0\\
U_{I}+f_{I}U_{R}+f_{R}U_{I}=0. \end{array}\end{cases} \label{system of eq}
\end{align}
In matrix form, the system \eqref{system of eq} writes 
\begin{align*}
\begin{pmatrix}U_{R}\\
U_{I}
\end{pmatrix}=A\begin{pmatrix}U_{R}\\
U_{I}
\end{pmatrix}\ \mbox{where}\ A\coloneqq\begin{pmatrix}-f_{R} & f_{I}\\
-f_{I} & -f_{R}
\end{pmatrix}.     
\end{align*}
Note that the equation $Av=v$ can be rewritten as $(A-I)v=0$ and therefore its only solution is the trivial solution iff $\det(A-I)\neq 0$. Furthermore we compute that 
\begin{align*}
\det (A-I)=(f_{R}+1)^{2}+f_{I}^{2}.
\end{align*}
By assumption we have 
$(f_{R}+1)^{2}(\xi)+f_{I}^{2}(\xi)\neq 0$ for a.e. $\xi\in \mathbb{R}^{d}$.
Consequently, it follows that  $U\equiv 0$. \\
\textbf{Step 2.} It remains to verify that $U(t,\xi)=e^{tf}U^{0}(\xi)\in L^{\infty}([0,T];L^{2}_{\mathrm{loc}}(\mathbb{R}^{d};\mathbb{C}))$ is a solution to \eqref{logistic distrubtional equation}. Indeed, for any $\psi\in C^{\infty}_{0}((-a,T);\mathcal{S}(\mathbb{R}^{d}))$ we may integrate by parts to get  
\begin{align*}
&\int_{0}^{T}\langle \partial_{\tau}\psi,U\rangle+\langle \psi,fU\rangle\ \dd \tau+\langle \psi(0,\cdot),U^{0}\rangle\\
=&\int_{0}^{T}\int_{\mathbb{R}^{d}}e^{f\tau}U^{0}(\xi)\partial_{\tau}\psi +fe^{f\tau}U^{0}(\xi)\psi\ \dd \xi\dd \tau+\int_{\mathbb{R}^{d}}U^{0}(\xi)\psi(0,\xi)\ \dd \xi\\
=&-\int_{0}^{T}\int_{\mathbb{R}^{d}}fe^{f\tau}U^{0}(\xi)\psi
\ \dd\xi\dd \tau+\int_{0}^{T}\int_{\mathbb{R}^{d}}fe^{f\tau}U^{0}(\xi)\psi\ \dd \xi\dd \tau\\\
&+\int_{\mathbb{R}^{d}}U^{0}(\xi)\psi(0,\xi)\ \dd \xi-\int_{\mathbb{R}^{d}}U^{0}(\xi)\psi(0,\xi)\ \dd \xi=0.  
\end{align*}
\end{proof}
\textit{Proof of Theorem \ref{Constant transport limit}}. With no loss of generality we assume that $B\neq 0$. 
By passing to Fourier we see that for any $\varphi\in C^{\infty}_{0}((-a,T); \mathcal{S}(\mathbb{R}^{d};\mathbb{C}))$ it holds that  
\begin{align*}
\int_{0}^{T}\langle \partial_{\tau}\overline{\widehat{\varphi}},\widehat{u}_{\varepsilon}\rangle \ \dd \tau
-\int_{0}^{T}\langle \overline{\widehat{\varphi}},i\widehat{B}\ast (\xi\widehat{\eta}(-\varepsilon\cdot))\widehat{u}_{\varepsilon}\rangle \ \dd\tau+\langle \overline{\widehat{\varphi}},\widehat{u}^{0}\rangle =0, 
\end{align*}
and therefore 
\begin{align}
\int_{0}^{T}\langle \partial_{\tau}\overline{\widehat{\varphi}},\widehat{u}_{\varepsilon}\rangle\ \dd \tau-\int_{0}^{T}\langle \overline{\widehat{\varphi}},iB\cdot \xi\widehat{\eta}(-\varepsilon \cdot)\widehat{u}_{\varepsilon}\rangle\ \dd \tau+\langle \overline{\widehat{\varphi}},\widehat{u}^{0}\rangle=0.    
\end{align}
Since the Fourier transform is a bijection from $\mathcal{S}(\mathbb{R}^{d};\mathbb{C})$ to itself it follows that for all $\psi\in C^{\infty}_{0}((-a,T);\mathcal{S}( \mathbb{R}^{d}))$ it holds that 
\begin{align*}
\int_{0}^{T}\langle \partial_{\tau}\psi,\widehat{u}_{\varepsilon}\rangle-\int_{0}^{T}\langle \psi, iB\cdot \xi\widehat{\eta}(-\varepsilon \cdot)\widehat{u}_{\varepsilon}\rangle\ \dd \tau+\langle \psi(0,\cdot), \widehat{u}^{0}\rangle=0.        
\end{align*}
Note that since $B\neq 0$ the set $\{\xi\in \mathbb{R}^{d}\vert B\cdot \xi =0\}$ is of measure $0$. Thus, by the assumption on $\eta$ we have   
\begin{align*}
(1-B\cdot \xi\Im(\widehat{\eta})(\varepsilon\xi))^{2}+\left\vert B\cdot \xi\right\vert^{2}\left\vert \Re(\widehat{\eta})(\varepsilon\xi)\right\vert^{2}> 0 \ \mbox{for a.e}\ \xi.       
\end{align*}
Therefore, by Lemma \ref{Uniqueness lemma} we conclude that $\widehat{u}_{\varepsilon}(t,\xi)$ is given by the formula     
\begin{align*}
\widehat{u}_{\varepsilon}(t,\xi)=e^{-itB\cdot \xi \widehat{\eta}(-\varepsilon \xi)}\widehat{u}_{0}(\xi).     
\end{align*}
From the same considerations we conclude that  $\widehat{u}$ given by the formula   \begin{align}
\widehat{u}(t,\xi)=e^{-itB\cdot \xi}\widehat{u}^{0}(\xi). \label{Fourier of u formula}    
\end{align}
Therefore, we conclude that 
\begin{align*}
\left\Vert u_{\varepsilon}(t,\cdot)-u(t,\cdot)\right\Vert_{H^{-s}}^{2}=\int_{\mathbb{R}^{d}}\frac{\left\vert \widehat{u}_{\varepsilon}(t,\xi)-\widehat{u}(t,\xi)\right\vert^{2}}{(1+\left\vert \xi\right\vert^{2})^{s}}\ \dd \xi=\int_{\mathbb{R}^{d}}\frac{\left\vert \widehat{u}_{0}(\xi)(e^{-itB\cdot \xi}-e^{-itB\cdot \xi \widehat{\eta}(-\varepsilon \xi)})\right\vert^{2} }{(1+\left\vert \xi\right\vert^{2})^{s}}\ \dd \xi.    
\end{align*}
Now, observe that: 
\begin{itemize}
    \item $\frac{\left\vert \widehat{u}_{0}\right\vert^{2} }{(1+\left\vert \xi\right\vert^{2})^{s}}\in L^{1}(\mathbb{R}^{d})$ because  $\xi\mapsto \frac{1}{(1+\left\vert \xi\right\vert^{2})^{s}}\in L^{1}(\mathbb{R}^{d})$ whenever $s>\frac{d}{2}$ and  $\widehat{u}_{0}\in L^{\infty}(\mathbb{R}^{d})$ as a Fourier transform of a finite Radon measure. 
    \item $\underset{t\in [0,T]}{\sup}\left\vert e^{-itB\cdot \xi}-e^{-itB\cdot \xi \widehat{\eta}(\varepsilon\xi)}\right\vert \underset{\varepsilon \rightarrow 0}{\rightarrow} 0$ because $\widehat{\eta}$ is continuous and $\widehat{\eta}(0)=1$. 
\end{itemize}
Therefore, we conclude by the dominated convergence theorem that 
\begin{align*}
\underset{t\in [0,T]}{\sup}\left\Vert (u_{\varepsilon}-u)(t,\cdot)\right\Vert_{H^{-s}}\underset{\varepsilon\rightarrow 0}{\rightarrow}0.      
\end{align*}
\qed
\\\\
Before presenting the proof of Theorem \ref{Free transport limit} we need the following simple lemma. 
\begin{lem}\label{bound on negative weighted fourier}
Suppose that: 
\begin{itemize}
    \item $u^{0}\in H^{-s}(\mathbb{R}^{d}\times \mathbb{R}^{d})\cap \mathcal{M}_{c}(\mathbb{R}^{d}\times \mathbb{R}^{d})$. 
    \item $B(x,v)=(v,\mathbf{0})$.
    \item $u$ is a distributional solution to \eqref{local Cauchy} with initial data $u^{0}$. 
\end{itemize}
Then, it holds that 
\begin{align*}
\left\Vert \widehat{u}(t,\cdot)\right\Vert_{\infty}=\left\Vert \widehat{u}^{0}\right\Vert_{\infty}\ \mbox{for all}\ t\in \mathbb{R}_{+}     
\end{align*}
and 
\begin{align}
\left\Vert  \nabla_{\zeta}\widehat{u}(t,\cdot)\right\Vert_{\infty}=\left\Vert \nabla_{\zeta}\widehat{u}^{0}\right\Vert_{\infty} \ \mbox{for all}\ t\in \mathbb{R}_{+}.   \label{Linfty of weighted grad fourier}    
\end{align}
\end{lem}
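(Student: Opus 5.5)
The plan is to compute $\widehat{u}$ explicitly: the local free transport equation with $B(x,v)=(v,\mathbf{0})$ becomes, after a Fourier transform in $(x,v)$, a first-order transport equation in $(\xi,\zeta)$ with a linear, divergence-free field. First I would derive the Fourier-side weak formulation, mimicking Step 3 of the proof of Theorem \ref{Measure valued solution to Liouville eq} with $\eta$ replaced by a Dirac mass (so $\widehat{\eta}\equiv 1$). Test the distributional formulation with $\overline{\varphi}$ for $\varphi\in C^{\infty}_{0}((-a,T);\mathcal{S}(\mathbb{R}^{d}\times\mathbb{R}^{d};\mathbb{C}))$ and use $\widehat{v\,\partial_{x}f}=i\,\xi\cdot(i\nabla_{\zeta})\widehat{f}$ up to signs and normalization. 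This gives, for all $\psi\in C^{\infty}_{0}((-a,T);\mathcal{S})$,
\begin{align*}
\int_{0}^{T}\langle \partial_{\tau}\psi+\xi\cdot\nabla_{\zeta}\psi,\widehat{u}(\tau,\cdot)\rangle\ \dd\tau+\langle\psi(0,\cdot),\widehat{u}^{0}\rangle=0 .
\end{align*}
Because $u^{0}\in\mathcal{M}_{c}$, the function $\widehat{u}^{0}$ is smooth (indeed entire), and it and all its derivatives are bounded. This is where compact support enters: $\nabla_{\zeta}\widehat{u}^{0}$ is the Fourier transform of $-iv\,u^{0}$, which is again a finite measure.

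Next I would identify the solution. The candidate is $U(t,\xi,\zeta)=\widehat{u}^{0}(\xi,\zeta+t\xi)$, or $\zeta-t\xi$ depending on the sign convention, i.e.\ $\widehat{u}^{0}$ transported along the characteristics $\dot\zeta=\pm\xi$, $\dot\xi=0$. Equivalently, the physical-side solution is $u(t)=(x,v)\mapsto(x+tv,v)$ pushed forward of $u^{0}$, which lies in $L^{\infty}([0,T];H^{-s})$. A direct change of variables shows that $U$ satisfies the weak formulation above. For uniqueness I would argue exactly as in Step 3 of Theorem \ref{Measure valued solution to Liouville eq}: given $\Psi\in C^{\infty}_{0}$, solve the dual problem $\partial_{\tau}\psi+\xi\cdot\nabla_{\zeta}\psi=\Psi$ with $\psi(T)=0$ explicitly by integrating along characteristics. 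This is legitimate because the field is linear, so $\psi$ remains Schwartz in $(\xi,\zeta)$ for bounded times. Alternatively, one can simply invoke the classical uniqueness of distributional solutions of \eqref{local Cauchy} recalled after the first definition. Either way, $\widehat{u}(t,\xi,\zeta)=\widehat{u}^{0}(\xi,\zeta\pm t\xi)$.

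Finally, both identities follow because, for each fixed $t$, the map $(\xi,\zeta)\mapsto(\xi,\zeta\pm t\xi)$ is a bijection of $\mathbb{R}^{2d}$ (shear, determinant one). Hence $\|\widehat{u}(t)\|_{\infty}=\|\widehat{u}^{0}\|_{\infty}$. Differentiating in $\zeta$ gives $\nabla_{\zeta}\widehat{u}(t,\xi,\zeta)=(\nabla_{\zeta}\widehat{u}^{0})(\xi,\zeta\pm t\xi)$, since the shear does not mix $\zeta$-derivatives with $\xi$-derivatives. Therefore the sup norms again coincide, and \eqref{Linfty of weighted grad fourier} follows. The only delicate point I expect is the justification that $\widehat{u}(t)$ is pointwise given by this formula, rather than merely as a distribution in $(t,\xi,\zeta)$. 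This needs the dual-problem uniqueness argument to be run carefully with Schwartz test functions. Once that is settled, smoothness of $\widehat{u}^{0}$ makes the computation of the sup norms immediate.
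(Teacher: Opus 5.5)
Your proposal is correct and follows the same route as the paper. Both arguments Fourier-transform the equation to $\partial_t\widehat{u}-\xi\cdot\nabla_\zeta\widehat{u}=0$, identify the solution as the shear $\widehat{u}(t,\xi,\zeta)=\widehat{u}^{0}(\xi,\zeta+t\xi)$, and read off both sup-norm identities from the shear invariance, using the boundedness of $\nabla_\zeta\widehat{u}^{0}$ that comes from compact support. The only difference is that you justify the identification of $\widehat{u}$ with the explicit formula, via the dual-problem uniqueness argument along characteristics, whereas the paper simply asserts that $\widehat{u}$ is a classical solution.
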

\begin{proof}
Note that $\widehat{u}^{0}$ is smooth because $u^{0}\in \mathcal{M}_{c}(\mathbb{R}^{d}\times \mathbb{R}^{d})$. Therefore, the Fourier transform of $u(t,\xi,\zeta)$ is a classical solution of the equation 
\begin{align*}
\partial_{t}\widehat{u}(t,\xi,\zeta)-\xi\cdot \nabla_{\zeta}\widehat{u}(t,\xi,\zeta)=0,\ \widehat{u}(0,\cdot)=\widehat{u}^{0}.   \end{align*}
This is a linear transport equation which can be solved explicitly by $\widehat{u}(t,\xi,\zeta)=\widehat{u}^{0}(\xi,\zeta+t\xi)$. Consequently, we have 
\begin{align*}
\left\Vert \nabla_{\zeta}\widehat{u}(t,\cdot)\right\Vert_{\infty}=\left\Vert \nabla_{\zeta}\widehat{u}^{0}(\xi,\zeta+t\xi)\right\Vert_{\infty}= \left\Vert \nabla_{\zeta}\widehat{u}^{0}\right\Vert_{\infty}.        
\end{align*}
Note that since $u^{0}\in \mathcal{M}_{c}(\mathbb{R}^{d}\times \mathbb{R}^{d})$ the Fourier transform of $u^{0}$ has bounded derivatives and thus $\nabla_{\zeta}\widehat{u}^{0}\in L^{\infty}(\mathbb{R}^{d}\times \mathbb{R}^{d})$. 
\end{proof} 
\textit{Proof of Theorem \ref{Free transport limit}.} 
\textbf{Step 1}. Assume first  that $\widehat{u
}^{0}$ is real valued. The Fourier transform of $u_{\varepsilon}$ is a classical solution of the equation:
\begin{align}
\partial_{t}\widehat u_{\varepsilon}(t,\xi,\zeta)-\widehat{\eta}(\varepsilon(\xi,\zeta))\xi\cdot \nabla_{\zeta}\widehat u_{\varepsilon}(t,\xi,\zeta)=\varepsilon \xi \cdot\nabla_{\zeta}\widehat{\eta}(\varepsilon(\xi,\zeta))\widehat u_{\varepsilon}(t,\xi,\zeta),\ \widehat{u}_{\varepsilon}(0,\cdot)=\widehat{u}^{0} .  \label{equation for hatueps}
\end{align}
On the other hand, the Fourier transform of $u$ satisfies the equation:  
\begin{align}
\partial_{t}\widehat{u}(t,\xi,\zeta)-\xi\cdot \nabla_{\zeta}\widehat{u}(t,\xi,\zeta)=0, \ \widehat{u}(0,\cdot)=\widehat{u}^{0}.\label{equation for hat u}
\end{align}
Subtracting \eqref{equation for hat u} from  \eqref{equation for hatueps} we get 
\begin{align}
\begin{cases}
\begin{array}{lc}
\partial_{t}(\widehat{u}_{\varepsilon}-\widehat{u})+\widehat{\eta}(\varepsilon(\xi,\zeta))\xi \cdot \nabla_{\zeta}(\widehat{u}-\widehat{u}_{\varepsilon})+\xi\cdot \nabla_{\zeta}\widehat{u}-\widehat{\eta}(\varepsilon(\xi,\zeta))\xi\cdot \nabla_{\zeta}\widehat{u} =\varepsilon\xi \cdot \nabla_{\zeta}\widehat{\eta}(\varepsilon(\xi,\zeta))\widehat{u}_{\varepsilon}\\
(\widehat{u}_{\varepsilon}-\widehat{u})(0,\cdot)=0. 
\end{array}
\end{cases}
\label{equation for difference}
\end{align}
Multiplying \eqref{equation for difference} by $(\widehat{u}_{\varepsilon}-\widehat{u})w_{-s}(\xi,\zeta)$ and integrating we get 
\begin{align*}
\frac{\dd}{\dd t}\frac{1}{2}\left\Vert (u_{\varepsilon}-u)(t,\cdot)\right\Vert_{H^{-s}}^{2}&-\frac{1}{2}\int_{\mathbb{R}^{d}\times \mathbb{R}^{d}}w_{-s}(\xi,\zeta)\widehat{\eta}(\varepsilon(\xi,\zeta))\xi\cdot \nabla_{\zeta}\left\vert (\widehat{u}_{\varepsilon}-\widehat{u})\right\vert^{2}(t,\xi,\zeta)\ \dd \xi\dd \zeta\\
&+\int_{\mathbb{R}^{d}\times \mathbb{R}^{d}}w_{-s}(\xi,\zeta)\xi \cdot \nabla_{\zeta}\widehat{u}(t,\xi,\zeta)(1-\widehat{\eta}(\varepsilon(\xi,\zeta)))(\widehat{u}_{\varepsilon}-\widehat{u})(t,\xi,\zeta)\ \dd \xi\dd \zeta\\
&=\int_{\mathbb{R}^{d}\times \mathbb{R}^{d}}w_{-s}(\xi,\zeta)\varepsilon \xi\cdot \nabla_{\zeta}\widehat{\eta}(\varepsilon(\xi,\zeta))\widehat{u}_{\varepsilon}(t,\xi,\zeta)(\widehat{u}_{\varepsilon}-\widehat{u})(t,\xi,\zeta)\ \dd \xi \dd \zeta. 
\end{align*}
Therefore we get  
\begin{align*}
\frac{\dd}{\dd t}\left\Vert (u_{\varepsilon}-u)(t,\cdot)\right\Vert_{H^{-s}}^{2}=I_{1}+I_{2}+I_{3}     
\end{align*}
where we have set 
\begin{align*}
&I_{1}\coloneqq  \frac{1}{2}\int_{\mathbb{R}^{d}\times \mathbb{R}^{d}}w_{-s}(\xi,\zeta)\widehat{\eta}(\varepsilon(\xi,\zeta))\xi\cdot \nabla_{\zeta}\left\vert (\widehat{u}_{\varepsilon}-\widehat{u})\right\vert^{2}(t,\xi,\zeta)\ \dd \xi\dd \zeta\\
&I_{2}\coloneqq -\int_{\mathbb{R}^{d}\times \mathbb{R}^{d}}w_{-s}(\xi,\zeta)\xi \cdot \nabla_{\zeta}\widehat{u}(t,\xi,\zeta)(1-\widehat{\eta}(\varepsilon(\xi,\zeta)))(\widehat{u}_{\varepsilon}-\widehat{u})(t,\xi,\zeta)\ \dd \xi\dd \zeta
\end{align*}
and 
\begin{align*}
I_{3}\coloneqq \int_{\mathbb{R}^{d}\times \mathbb{R}^{d}}w_{-s}(\xi,\zeta)\varepsilon \xi\cdot \nabla_{\zeta}\widehat{\eta}(\varepsilon(\xi,\zeta))\widehat{u}_{\varepsilon}(t,\xi,\zeta)(\widehat{u}_{\varepsilon}-\widehat{u})(t,\xi,\zeta)\ \dd \xi \dd \zeta.     
\end{align*}
\textbf{Estimate on $I_{1}$.} Integrating by parts and observing that $-\nabla_{\zeta}w_{-s}(\xi,\zeta)=2sw_{-(s+1)}(\xi,\zeta)\zeta$ we obtain 
\begin{align*}
I_{1}=&-\frac{1}{2}\int_{\mathbb{R}^{d}\times \mathbb{R}^{d}}w_{-s}(\xi,\zeta)\varepsilon \xi \cdot \nabla_{\zeta}\widehat{\eta}(\varepsilon(\xi,\zeta))\left\vert \widehat{u}_{\varepsilon}-\widehat{u}\right\vert^{2}(t,\xi,\zeta)\ \dd \xi\dd  \zeta\\
&+s\int_{\mathbb{R}^{d}\times \mathbb{R}^{d}}w_{-(s+1)}(\xi,\zeta)\xi\cdot \zeta \widehat{\eta}(\varepsilon(\xi,\zeta))\left\vert \widehat{u}_{\varepsilon}-\widehat{u}\right\vert^{2}(t,\xi,\zeta)\ \dd \xi \dd \zeta\coloneqq I_{1}^{1}+I_{1}^{2}.      \end{align*}
To estimate $I_{1}^{1}$ note that
$\underset{(\xi,\zeta)\in \mathbb{R}^{d}\times \mathbb{R}^{d}}{\sup}\left\vert \varepsilon \xi\cdot \nabla_{\zeta}\widehat{\eta}(\varepsilon(\xi,\zeta))\right\vert= \underset{(\xi,\zeta)\in \mathbb{R}^{d}\times \mathbb{R}^{d}}{\sup} \left\vert\xi \cdot \nabla_{\zeta}\widehat{\eta}(\xi,\zeta)\right\vert$ and so we obtain the bound 
\begin{align}
I_{1}^{1}&\leq \frac{1
}{2}\underset{(\xi,\zeta)\in \mathbb{R}^{d}\times \mathbb{R}^{d}}{\sup}\left\vert \xi \cdot \nabla_{\zeta}\widehat{\eta}(\xi,\zeta)\right\vert \int_{\mathbb{R}^{d}\times \mathbb{R}^{d}}w_{-s}(\xi,\zeta)\left\vert \widehat{u}_{\varepsilon}-\widehat{u}\right\vert^{2}(t,\xi,\zeta)\ \dd \xi\dd \zeta \notag\\
&=  \frac{1
}{2}\underset{(\xi,\zeta)\in \mathbb{R}^{d}\times \mathbb{R}^{d}}{\sup}\left\vert \xi \cdot \nabla_{\zeta}\widehat{\eta}(\xi,\zeta)\right\vert \left\Vert (u_{\varepsilon}-u)(t,\cdot)\right\Vert_{H^{-s}}^{2}.     \label{est on I11}  
\end{align}
To estimate $I_{1}^{2}$, note that 
$$\left\vert \widehat{\eta}(\varepsilon(\xi,\zeta))w_{-(s+1)}(\xi,\zeta)\xi \cdot \zeta \right\vert\leq \left\Vert \widehat{\eta}\right\Vert_{\infty} w_{-s}(\xi,\zeta)\leq \left\Vert \eta\right\Vert_{1}w_{-s}(\xi,\zeta)=w_{-s}(\xi,\zeta).     
$$ 
Consequently, we obtain the bound  
\begin{align}
I_{1}^{2}\leq s\int_{\mathbb{R}^{d}\times \mathbb{R}^{d}}w_{-s}(\xi,\zeta)\left\vert \widehat{u}_{\varepsilon}-\widehat{u}\right\vert^{2}(t,\xi,\zeta)\ \dd \xi\dd \zeta=s\left\Vert (u_{\varepsilon}-u)(t,\cdot)\right\Vert_{H^{-s}}^{2}. \label{I21 est}  
\end{align}
Gathering \eqref{est on I11}-\eqref{I21 est} we conclude that 
\begin{align}
I_{1}\leq \left(s+\frac{1}{2}\underset{(\xi,\zeta)\in \mathbb{R}^{d}\times \mathbb{R}^{d}}{\sup}\left\vert \xi \cdot \nabla_{\zeta}\widehat{\eta}(\xi,\zeta)\right\vert \right)\left\Vert (u_{\varepsilon}-u)(t,\cdot)\right\Vert_{H^{-s}}^{2}. \label{I1 est}     
\end{align}
\textbf{Estimate on $I_{2}$.} By Lemma \ref{bound on negative weighted fourier} we have 
\begin{align*}
I_{2}&\leq \frac{1}{2}\int_{\mathbb{R}^{d}\times \mathbb{R}^{d}}w_{-s}(\xi,\zeta)\left\vert \widehat{u}_{\varepsilon}-\widehat{u}\right\vert^{2}(t,\xi,\zeta)\ \dd \xi\dd \zeta\\
&+\frac{1}{2}\underset{(t,\xi,\zeta)\in \mathbb{R}_{+}\times\mathbb{R}^{d}\times \mathbb{R}^{d}}{\sup}\left\vert \cdot \nabla_{\zeta}\widehat{u}(t,\xi,\zeta)\right\vert^{2} \int_{\mathbb{R}^{d}\times \mathbb{R}^{d}}w_{-s}(\xi,\zeta)\left\vert \xi\right\vert^{2}\left\vert 1-\widehat{\eta}(\varepsilon(\xi,\zeta))\right\vert^{2}\ \dd \xi\dd \zeta\\
&\leq \frac{1}{2}\left\Vert (u_{\varepsilon}-u)(t,\cdot)\right\Vert_{H^{-s}}^{2}+\frac{1}{2}\left\Vert \nabla_{\zeta}\widehat{u}^{0}\right\Vert_{\infty}^{2} \int_{\mathbb{R}^{d}\times \mathbb{R}^{d}}w_{-(s-1)}(\xi,\zeta)\left\vert (1-\widehat{\eta}(\varepsilon(\xi,\zeta)))\right\vert^{2}\ \dd \xi\dd \zeta .  
\end{align*}
Since $\widehat{\eta}(\varepsilon\cdot )\underset{\varepsilon\rightarrow 0}{\rightarrow} 1$ pointwise and $w_{-(s-1)}\in L^{1}(\mathbb{R}^{d}\times \mathbb{R}^{d})$ whenever $s>1+\frac{d}{2}$, we conclude from the dominated convergence theorem that 
\begin{align*}
\int_{\mathbb{R}^{d}\times \mathbb{R}^{d}}w_{-(s-1)}(\xi,\zeta)\left\vert 1-\widehat{\eta}(\varepsilon(\xi,\zeta))\right\vert^{2}\ \dd \xi\dd \zeta\underset{\varepsilon \rightarrow 0}{\rightarrow}0.    
\end{align*}
Thus, we deduce that 
\begin{align}
I_{2}\leq \frac{1}{2}
\left\Vert (u_{\varepsilon}-u)(t,\cdot)\right\Vert_{H^{-s}}^{2}+o_{\varepsilon}(1). \label{I2 est}   
\end{align}
\textbf{Estimate on $I_{3}$.} We recast $I_{3}$ as   
\begin{align*}
I_{3}&=\int_{\mathbb{R}^{d}\times \mathbb{R}^{d}}w_{-s}(\xi,\zeta)\varepsilon\xi \cdot \nabla_{\zeta}\widehat{\eta}(\varepsilon(\xi,\zeta))\left\vert \widehat{u}_{\varepsilon}-\widehat{u}\right\vert^{2}(t,\xi,\zeta)\ \dd \xi\dd \zeta\\
&+\int_{\mathbb{R}^{d}\times \mathbb{R}^{d}}w_{-s}(\xi,\zeta)\varepsilon \xi\cdot \nabla_{\zeta}\widehat{\eta}(\varepsilon(\xi,\zeta))\widehat{u}(t,\xi,\zeta)(\widehat{u}_{\varepsilon}-\widehat{u})(t,\xi,\zeta)\ \dd \xi\dd \zeta=I_{3}^{1}+I_{3}^{2}.    
\end{align*}
First, observe that 
\begin{align*}
I_{3}^{1}\leq\underset{(\xi,\zeta)\in \mathbb{R}^{d}\times \mathbb{R}^{d}}{\sup}\left\vert \xi \cdot \nabla_{\zeta}\widehat{\eta}(\xi,\zeta)\right\vert \left\Vert (u_{\varepsilon}-u)(t,\cdot)\right\Vert_{H^{-s}}^{2}.       
\end{align*}
Second, using the elementary inequality $\left\vert \xi \right\vert^{2}w_{-s}(\xi,\zeta)\leq w_{-(s-1)}(\xi,\zeta)$ and applying Lemma \ref{bound on negative weighted fourier} we get  
\begin{align*}
I^{2}_{3}&\leq \frac{\varepsilon^{2}}{2}\int_{\mathbb{R}^{d}\times \mathbb{R}^{d}}w_{-s}(\xi,\zeta)\left\vert \xi\right\vert^{2}\left\vert \nabla_{\zeta} \widehat{\eta}\right\vert^{2} (\varepsilon(\xi,\zeta))\left\vert \widehat{u}\right\vert^{2}(t,\xi,\zeta)\ \dd \xi\dd \zeta+\frac{1}{2}\left\Vert ({u}_{\varepsilon}-{u})(t,\cdot)\right\Vert_{H^{-s}}^{2}\\
&\leq \frac{\varepsilon^{2}}{2}\left\Vert \widehat{u}^{0}\right\Vert_{\infty}^{2}\left\Vert \nabla_{\zeta}\widehat{\eta}\right\Vert_{\infty}^{2}\int_{\mathbb{R}^{d}\times \mathbb{R}^{d}}w_{-(s-1)}(\xi,\zeta)\ \dd \xi\dd \zeta+\frac{1}{2}\left\Vert (u_{\varepsilon}-u)(t,\cdot)\right\Vert_{H^{-s}}^{2}.     
\end{align*}
Hence, we infer  
\begin{align}
I_{3}\lesssim \left\Vert (u_{\varepsilon}-u)(t,\cdot)\right\Vert_{H^{-s}}^{2}+o_{\varepsilon}(1).   \label{I3 est}   
\end{align}
To conclude, gathering \eqref{I1 est}-\eqref{I3 est} we infer that for some $C>0$ it holds that 
\begin{align*}
\frac{\dd}{\dd t}\left\Vert (u_{\varepsilon}-u)(t,\cdot)\right\Vert_{H^{-s}}^{2}\leq C\left(\left\Vert (u_{\varepsilon}-u)(t,\cdot)\right\Vert_{H^{-s}}^{2}+o_{\varepsilon}(1) \right).     
\end{align*}
By Gr\"onwall's Lemma we deduce that 
\begin{align}
\underset{t\in [0,T]}{\sup}\left\Vert (u_{\varepsilon}-u)(t,\cdot)\right\Vert_{H^{-s}}\lesssim o_{\varepsilon}(1)\underset{\varepsilon \rightarrow 0}{\rightarrow 0}. \label{Vanishing real initial data} \end{align}
Next, assume that 
$\widehat{u}^{0}$ is complex valued and denote $u_{R}=\Re u, u_{I}=\Im u$ and $u_{R}^{\varepsilon}=\Re u_{\varepsilon},u_{I}^{\varepsilon}=\Im u_{\varepsilon}$. Then $u_{R}^{\varepsilon}-u_{R}$ and $u_{I}^{\varepsilon}-u_{I}$ satisfy \eqref{equation for difference} and therefore thanks to \eqref{Vanishing real initial data} we deduce that  
\begin{align*}
\underset{t\in [0,T]}{\sup}\left\Vert (u_{\varepsilon}-u)(t,\cdot)\right\Vert_{H^{-s}}&\leq \underset{t\in [0,T]}{\sup}\left\Vert (u_{R}^{\varepsilon}-u_{R})(t,\cdot)\right\Vert_{H^{-s}}+\underset{t\in [0,T]}{\sup}\left\Vert (u_{I}^{\varepsilon}-u_{I})(t,\cdot)\right\Vert_{H^{-s}}\underset{\varepsilon \rightarrow 0}{\rightarrow} 0.  \end{align*}
\qed
\section{Proof of Theorem \ref{nonlocal to local general B}}\label{general B sec}
We start by proving preservation in time of positivity. In the settings of nonlocal conservation laws (such as \eqref{nonlocal conservation}) this is a basic feature of the equation which does not necessitate any anisotropic  structure on the kernel. However, in the present settings we take advantage of the anisotropic structure to prove this property.  
\begin{lem}\label{Positivity invariant}
Let the assumptions and notations of Theorem \ref{nonlocal to local general B} hold. Then $u_{\varepsilon}\in \mathcal{M}_{+}([0,T]\times \mathbb{R})$.     
\end{lem}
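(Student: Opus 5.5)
The plan is to prove positivity first for the regularized solutions $u^{\delta}$ of \eqref{regularized} (with $\eta$ replaced by $\eta_{\varepsilon}$), and then pass to the limit using Remark \ref{Remark about approximating measure valued solutions}. Positivity at the regularized level should follow from the sign structure of $\partial_{x}\eta_{\varepsilon}$, which is where the anisotropy of $\eta$ enters.

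\textbf{Step 1: structure of the kernel.} Since $\eta_{\varepsilon}$ satisfies the same hypotheses as $\eta$, I take $\varepsilon=1$. Because $\eta$ is bounded, non-decreasing on $\mathbb{R}_{-}$ and vanishes on $(0,\infty)$, the left limit $\eta(0^{-})\geq 0$ exists. Its distributional derivative splits as
\begin{align*}
\eta'=\mu-\eta(0^{-})\delta_{0},
\end{align*}
where $\mu$ is a nonnegative finite measure on $(-\infty,0)$ with total mass $\eta(0^{-})$. Hence, for $w\in W^{1,\infty}$,
\begin{align*}
\partial_{x}(\eta\ast w)(x)=\int_{(-\infty,0)}w(x-z)\,\dd\mu(z)-\eta(0^{-})w(x)\eqqcolon (Kw)(x)-\eta(0^{-})w(x),
\end{align*}
where $K$ is a positivity-preserving operator with $\|Kw\|_{\infty}\leq \eta(0^{-})\|w\|_{\infty}$.

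\textbf{Step 2: positivity for the regularized problem.} Set $\beta\coloneqq -B\geq 0$, which is bounded, and $c\coloneqq\eta(0^{-})$. The classical solution $u^{\delta}$ from Theorem \ref{wellposedness classical sol} satisfies, pointwise in $x$,
\begin{align*}
\partial_{t}u^{\delta}+c\,\beta(x)u^{\delta}=\beta(x)\,Ku^{\delta}.
\end{align*}
Integrating in time with the factor $e^{c\beta(x)t}$ gives the mild formulation
\begin{align*}
u^{\delta}(t,x)=e^{-c\beta(x)t}(\chi_{\delta}\ast u^{0})(x)+\int_{0}^{t}e^{-c\beta(x)(t-\tau)}\beta(x)(Ku^{\delta})(\tau,x)\,\dd\tau .
\end{align*}
The right-hand side defines a map on $L^{\infty}([0,T_{\ast}]\times\mathbb{R})$. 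As in the proof of Theorem \ref{wellposedness classical sol}, it is a contraction for $T_{\ast}$ small, with $T_{\ast}$ depending only on $\|\beta\|_{\infty}$ and $c$. It also maps the closed cone of nonnegative functions into itself, because $\chi_{\delta}\ast u^{0}\geq 0$ (as $u^{0}\in\mathcal{M}_{+}$), $\beta\geq 0$ and $K$ preserves positivity. Therefore the fixed point is nonnegative. By uniqueness in Theorem \ref{wellposedness classical sol} this fixed point is $u^{\delta}$, and iterating over time intervals of length $T_{\ast}$ gives $u^{\delta}\geq 0$ on $[0,T]\times\mathbb{R}$.

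\textbf{Step 3: passage to the limit.} The bound \eqref{L1 uniform in eta}, together with positivity, gives $\|u^{\delta}(t,\cdot)\|_{1}\leq e^{t\|B\|_{\infty}\|\eta_{\varepsilon}\|_{\mathrm{TV}}}|u^{0}|$ uniformly in $\delta$. Hence $\{u^{\delta}\}$ is bounded in $\mathcal{M}([0,T]\times\mathbb{R})$, and a weak-$\ast$ limit point is a nonnegative measure. By Remark \ref{Remark about approximating measure valued solutions}, $u^{\delta}\rightharpoonup u_{\varepsilon}$ in $L^{2}([0,T];H^{-s})$. Testing against nonnegative $\varphi\in C^{\infty}_{0}((0,T)\times\mathbb{R})$, both limits coincide, so
\begin{align*}
\int_{0}^{T}\langle\varphi,u_{\varepsilon}\rangle\,\dd\tau=\lim_{\delta\to 0}\int\!\!\int\varphi\, u^{\delta}\geq 0 .
\end{align*}
Thus $u_{\varepsilon}$ is a nonnegative distribution, hence a positive Radon measure by the Riesz representation theorem, and it is finite by the uniform mass bound.

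\textbf{Main obstacle.} The delicate step is Step 1. One must justify the decomposition of $\partial_{x}(\eta\ast w)$ carefully, since $\eta$ is only BV, and check that it is compatible with the formula $B\cdot\nabla_{x}\eta\ast u$ used in Theorem \ref{wellposedness classical sol}. This is exactly where monotonicity and $\mathrm{supp}(\eta)\subset\mathbb{R}_{-}$ are used: they make the only negative part of $\eta'$ a Dirac mass at the origin, which is absorbed by the damping term. If the pointwise formula causes trouble, I would first prove Step 2 for smooth monotone approximations $\eta_{n}$ supported in $\mathbb{R}_{-}$, and then pass $\eta_{n}\to\eta$ in BV. This is legitimate because the bounds in Theorem \ref{wellposedness classical sol} depend only on $\|\eta\|_{\mathrm{BV}}$.
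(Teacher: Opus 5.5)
Your proof is correct. Its outer structure matches the paper's: positivity for the regularized classical solutions, then passage to the limit via Remark \ref{Remark about approximating measure valued solutions} and the fact that nonnegative distributions are measures. The key step, however, is done differently.

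The paper multiplies the equation by $\mathrm{sgn}(u_{\varepsilon})$ and uses $B\leq 0$ and $\eta'\geq 0$ on $\mathbb{R}_{-}$ to get a differential inequality for $|u_{\varepsilon}|-u_{\varepsilon}$. It then evaluates this at a spatial maximum point to show that $\|(|u_{\varepsilon}|-u_{\varepsilon})(t,\cdot)\|_{\infty}$ cannot grow from zero. In other words, it proves a maximum principle for the negative part.

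You use the same structural fact, namely that the only negative part of $\eta'$ is the Dirac mass at the origin, but you use it algebraically. The equation becomes $\partial_{t}u+c\beta u=\beta Ku$ with $K$ positivity preserving. The Duhamel--Picard map then leaves the closed cone of nonnegative functions invariant, so its unique fixed point is nonnegative.

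Your route has several advantages:
\begin{enumerate}
\item It works directly with $\eta\in\mathrm{BV}$, whereas the paper first approximates $\eta$ by kernels in $W^{1,\infty}(\mathbb{R}_{-})$.
\item It avoids the technical points of the maximum-point argument: the supremum need not be attained on $\mathbb{R}$, and differentiating $t\mapsto\|\cdot\|_{\infty}$ needs justification.
\item It carries the true jump size $\eta(0^{-})$. The paper's displayed formula, with the term $u_{\varepsilon}/\varepsilon$, tacitly normalizes $\eta(0^{-})=1$.
\item Your uniform $L^{1}$ bound gives finiteness of the limit measure, which the paper leaves implicit.
\end{enumerate}

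The paper's argument is shorter. It is also the standard tool from the nonlocal conservation law literature, and it carries over more easily to nonlinear problems where no positive mild formulation is available.

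Finally, the decomposition you flag as the main obstacle is routine. The identity $\partial_{x}(\eta\ast w)=\eta'\ast w$ holds distributionally for $\eta\in\mathrm{BV}$ and $w\in L^{\infty}$, so your fallback approximation of $\eta$ is not needed.
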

\begin{proof} \textbf{Step 1}. Suppose first that $u^{0}\in W^{1,\infty}(\mathbb{R})$. Moreover, by a routine approximation argument we may assume that $\eta\in W^{1,\infty}(\mathbb{R}_{-})$ (see for instance Theorem 1.1, step 3 in \cite{colombo2023nonlocal}). Let $u_{\varepsilon}$ be the unique classical solution with initial data $u^{0}$ to \eqref{nonlocal Cauhy problem simplification} guaranteed via Theorem \ref{wellposedness classical sol}. Multiplying \eqref{nonlocal Cauhy problem simplification} by $\mathrm{sgn}(u_{\varepsilon}(t,x))$ we get 
\begin{align}
\partial_{t}\left\vert u_{\varepsilon}\right\vert+\mathrm{sgn}(u_{\varepsilon})B\partial_{x}(\eta_{\varepsilon}\ast u_{\varepsilon})=0, \ \left\vert u_{\varepsilon}\right\vert (0,\cdot)=\left\vert u^{0}\right\vert. \label{equation for absolute value}  \end{align}
Subtracting \eqref{nonlocal Cauhy problem simplification} from \eqref{equation for absolute value} we get 
\begin{align}
\partial_{t}(\left\vert u_{\varepsilon}\right\vert(t,x)-u_{\varepsilon}(t,x))&=B\partial_{x}(\eta_{\varepsilon}\ast u_{\varepsilon})-\mathrm{sgn}(u_{\varepsilon}(t,x))B\partial_{x}(\eta_{\varepsilon}\ast u_{\varepsilon}) \notag\\
&=B(1-\mathrm{sgn}(u_{\varepsilon}(t,x)))\left[\frac{1}{\varepsilon^{2}}\int_{x}^{\infty}\eta'(\frac{x-y}{\varepsilon})u_{\varepsilon}(t,y)\ \dd y-\frac{u_{\varepsilon}(t,x)}{\varepsilon}\right] \notag\\
&\leq B\left[\frac{\left\vert u_{\varepsilon}\right\vert(t,x)-u_{\varepsilon}(t,x)}{\varepsilon}-\frac{1}{\varepsilon^{2}}\int_{x}^{\infty}\eta'(\frac{x-y}{\varepsilon})(\left\vert u_{\varepsilon}\right\vert-u_{\varepsilon})(t,y)\ \dd y \right], \label{ine for difference}
\end{align}
where we have used that $B\leq 0$ in the last inequality. 
If $x$ is a point of maximum of $\left\vert u_
{\varepsilon}\right\vert(t,x)-u_{\varepsilon}(t,x)$ then we have 
\begin{align*}
\frac{\left\vert u_{\varepsilon}\right\vert(t,x)-u_{\varepsilon
}(t,x)}{\varepsilon}-\frac{1}{\varepsilon^{2}}\int_{x}^{\infty}\eta'(\frac{x-y}{\varepsilon})(\left\vert u_{\varepsilon}\right\vert-u_{\varepsilon})(t,y)\ \dd y\geq 0.     
\end{align*}
Therefore, evaluating inequality \eqref{ine for difference} at a point of maximum of $\left\vert u_{\varepsilon}\right\vert (t,x)-u_{\varepsilon}(t,x)$ and using that $B\leq 0$ and that $u^{0}\geq 0$ we get 
\begin{align*}
\frac{\dd}{\dd t}\left\Vert (\left\vert u_{\varepsilon}\right\vert -u_{\varepsilon})(t,\cdot)\right\Vert_{\infty}\leq 0\Rightarrow \left\Vert (\left\vert u_{\varepsilon}\right\vert -u_{\varepsilon})(t,\cdot)\right\Vert_{\infty}=0\Rightarrow u_{\varepsilon}\equiv \left\vert u_{\varepsilon}\right\vert.        
\end{align*}  
\textbf{Step 2}. Assume now that  $u^{0}\in H^{-s}(\mathbb{R})\cap \mathcal{M}_{+}(\mathbb{R})$. Fix a family of standard mollifiers $\{\chi_{\delta}\}_{\delta>0}$ and consider the regularized problem 
\begin{align*}
\partial_{t}u_{\varepsilon}^{\delta}+B\partial_{x}(\eta_{\varepsilon}\ast u_{\varepsilon}^{\delta})=0, \ u_{\varepsilon}^{\delta}(0,\cdot)=\chi_{\delta}\ast u^{0}.     
\end{align*}
Then $\chi_{\delta}\ast u^{0}\in W^{1,\infty}(\mathbb{R})$ and $\chi_{\delta}\ast u^{0}\geq 0$. Therefore, by what we already proved in step 1 it holds that $u_{\varepsilon}^{\delta}(t,x)\geq 0$ for all $(t,x)\in \mathbb{R}_{+}\times \mathbb{R}$. By Remark \ref{Remark about approximating measure valued solutions} we have $u_{\varepsilon}^{\delta}\underset{\delta \rightarrow 0}{\rightharpoonup} u_{\varepsilon}$ in $L^{2}([0,T];H^{-s}(\mathbb{R}))$ where $u_{\varepsilon}$ is the unique distributional solution of \eqref{nonlocal Cauhy problem simplification} with initial data $u^{0}$. Therefore for any $\varphi\in C^{\infty}_{0}((-a,T)\times \mathbb{R})$ we have 
\begin{align*}
0\leq \int_{0}^{T}\langle \varphi,u_{\varepsilon}^{\delta}\rangle\ \dd t \underset{\delta \rightarrow 0}{\rightarrow}\int_{0}^{T}\langle \varphi,u_{\varepsilon}\rangle\ \dd t.    
\end{align*}
By the Schwartz representation theorem any positive distribution is a positive measure and therefore we deduce that   $u_{\varepsilon}\in \mathcal{M}_{+}([0,T]\times \mathbb{R})$. 
\end{proof}
\textit{Proof of Theorem \ref{nonlocal to local general B}}.  Consider the regularized problem 
\begin{align*}
\partial_{t}u_{\varepsilon}^{\delta}+B\partial_{x}(\eta_{\varepsilon}\ast u_{\varepsilon}^{\delta})=0, \ u_{\varepsilon}^{\delta}(0,\cdot)=\chi_{\delta}\ast u^{0}.     
\end{align*}
By Lemma \ref{Positivity invariant} we have 
$u_{\varepsilon}^{\delta}\geq 0$ and therefore 
\begin{align*}
\frac{\dd}{\dd t}\left\Vert u_{\varepsilon}^{\delta}(t,\cdot)\right\Vert_{1}&=\frac{\dd}{\dd t}\int_{\mathbb{R}}u_{\varepsilon}^{\delta}(t,x)\ \dd x=-\int_{\mathbb{R}}B(x)\partial_{x}(\eta_{\varepsilon}\ast u_{\varepsilon}^{\delta})(t,x)\ \dd x\\
&=\int_{\mathbb{R}}\partial_{x}B(x)\eta_{\varepsilon}\ast u_{\varepsilon}^{\delta}(t,x)\ \dd x\leq \left\Vert \partial_{x}B\right\Vert_{\infty}\left\Vert \eta\right\Vert_{1}\left\Vert u_{\varepsilon}^{\delta}(t,\cdot)\right\Vert_{1}=\left\Vert \partial_{x}B\right\Vert_{\infty}\left\Vert u_{\varepsilon}^{\delta}(t,\cdot)\right\Vert_{1}.    
\end{align*}
By Gr\"onwall's inequality we conclude that 
\begin{align}
\left\Vert u_{\varepsilon}^{\delta}(t,\cdot)\right\Vert_{1}\leq e^{\left\Vert \partial_{x}B\right\Vert_{\infty}t}\left\Vert \chi_{\delta}\ast u^{0}\right\Vert_{1}\leq e^{\left\Vert \partial_{x}B\right\Vert_{\infty}t}\left\vert u^{0}\right\vert. \label{bound on L1 norm of delta regularization}      
\end{align}
Since $L^{1}(\mathbb{R})$ is embedded in $H^{-s}(\mathbb{R})$ for any $s>\frac{1}{2}$ it follows that there is some $u_{\varepsilon}\in L^{\infty}([0,T];H^{-s}(\mathbb{R}))$ such that $u_{\varepsilon}^{\delta}\underset{\delta \rightarrow 0}{\rightharpoonup}u_{\varepsilon}$ weakly in $L^{2}([0,T];H^{-s}(\mathbb{R}))$. 
By uniqueness, it must be that $u_{\varepsilon}$ is the unique distributional solution of \eqref{nonlocal Cauhy problem simplification}. Moreover, by \eqref{bound on L1 norm of delta regularization} it holds that 
\begin{align*}
\left\Vert u_{\varepsilon}\right\Vert_{L^{\infty}([0,T];H^{-s}(\mathbb{R}))}\leq e^{T\left\Vert \partial_{x}B\right\Vert_{\infty}}\left\vert u^{0}\right\vert.      
\end{align*}
Therefore, again by Lemma \ref{BanachAl Lemma} there exist some $u\in L^{\infty}([0,T];H^{-s}(\mathbb{R}))$ and a sequence $\varepsilon_{k}\underset{k\rightarrow \infty}{\rightarrow} 0$ such that it holds that $u_{\varepsilon_{k}}\underset{k\rightarrow \infty}{\rightharpoonup} u$ weakly in $L^{2}([0,T];H^{-s}(\mathbb{R}))$. Explicitly, this means that for any $\phi\in L^{2}([0,T];H^{s}(\mathbb{R}))$ it holds that 
\begin{align*}
\int_{0}^{T} \langle \phi,u_{\varepsilon_{k}}\rangle\ \dd \tau  \underset{k\rightarrow \infty}{\rightarrow} \int_{0}^{T}\langle \phi,u\rangle\ \dd \tau.  
\end{align*}
Based on this, we proceed by proving that for any $\varphi \in C^{\infty}_{0}(
(-a,T)\times \mathbb{R})$ it holds that 
\begin{align*}
\int_{0}^{T}\langle  \partial_{x}(\eta^{\mathrm{odd}}_{\varepsilon_{k}}\ast(B\varphi)),u_{\varepsilon_{k}}\rangle \ \dd \tau\underset{k \rightarrow \infty}{\rightarrow} 0    
\end{align*}
and 
\begin{align*}
\int_{0}^{T}\langle  \partial_{x}(\eta_{\varepsilon_{k}}^{\mathrm{even}}\ast (B\varphi)), u_{\varepsilon_{k}}\rangle \underset{k\rightarrow \infty}{\rightarrow}\int_{0}^{T}\langle \partial_{x}(B\varphi),u\rangle \ \dd \tau.     
\end{align*}
For any  test function $\varphi\in C^{\infty}_{0}((-a,T)\times \mathbb{R})$ and any given approximation of identity $\rho_{\varepsilon}$ we have 
\begin{align*}
&\left\vert \int_{0}^{T}\langle \rho_{\varepsilon_{k}}\ast \partial_{x}(B\varphi), u_{\varepsilon_{k}}\rangle\ \dd \tau -\int_{0}^{T}  \langle \partial_{x}(B\varphi), u\rangle \ \dd \tau\right\vert\\
&\leq \left\vert \int_{0}^{T} \langle\rho_{\varepsilon_{k}}\ast \partial_{x}(B\varphi)-\partial_{x}(B\varphi), u_{\varepsilon_{k}}\rangle\ \dd \tau  \right\vert\\
&+\left\vert  \int_{0}^{T} \langle \partial_{x}(B\varphi),u_{\varepsilon_{k}}-u\rangle \ \dd \tau\right\vert\coloneqq I_{k}+J_{k}. 
\end{align*}
Clearly $\partial_{x}(B\varphi)\in L^{2}([0,T];H^{s}(\mathbb{R}))$ and therefore $J_{k}\underset{k \rightarrow \infty}{\rightarrow}0$. In addition, since $\left\Vert u_{\varepsilon}\right\Vert_{L^{2}([0,T];H^{-s}(\mathbb{R}))}$ is uniformly bounded in $\varepsilon$ we have 
\begin{align*}
I_{k}\leq \left\Vert \rho_{\varepsilon_{k}}\ast \partial_{x}(\varphi B)-\partial_{x}(B\varphi)\right\Vert_{L^{2}([0,T];H^{s}(\mathbb{R}))}\left\Vert u_{\varepsilon_{k}}\right\Vert_{L^{2}([0,T];H^{-s}(\mathbb{R}))}\underset{k\rightarrow \infty}{\rightarrow}0.       
\end{align*}
 It follows that
\begin{align}
\int_{0}^{T}\langle \rho_{\varepsilon_{k}}\ast \partial_{x}(B\varphi),u_{\varepsilon_{k}}\rangle\ \dd \tau \underset{k\rightarrow \infty}{\rightarrow}\int_{0}^{T}  \langle \partial_{x}(B\varphi),u \rangle\ \dd \tau. \label{Convregence for rhoeps} \end{align}
Note that if $\eta_{\varepsilon}$ is an approximation of the identity then so is 
$\eta_{\varepsilon}(-x)$. Therefore, applying \eqref{Convregence for rhoeps} with $\rho_{\varepsilon}(x)=\eta_{\varepsilon}(x)$ and $\rho_{\varepsilon}(x)=\eta_{\varepsilon}(-x)$ we deduce that 
\begin{align*}
\int_{0}^{T} \langle \eta_{\varepsilon_{k}}^{\mathrm{odd}}\ast \partial_{x}(B\varphi),u_{\varepsilon_{k}}\rangle \ \dd \tau\underset{k \rightarrow \infty}{\rightarrow} 0    
\end{align*}
and 
\begin{align*}
\int_{0}^{T} \langle \eta_{\varepsilon_{k}}^{\mathrm{even}}\ast \partial_{x}(B\varphi), u_{\varepsilon_{k}}\rangle \ \dd \tau\underset{k\rightarrow \infty}{\rightarrow}\int_{0}^{T} \langle \partial_{x}(B\varphi), u\rangle\ \dd \tau .     
\end{align*}
It follows that $u\in L^{\infty}([0,T];H^{-s}(\mathbb{R}))$ is a distributional solution to \eqref{local Cauchy}, and so by uniqueness it must be that $u$ is the unique distributional solution of \eqref{local Cauchy}. The uniqueness of $u$ also implies that all converging subsequences of $u_{\varepsilon}$ converge to the same limit, and so the convergence must hold on the entire sequence. \qed 
\vspace{0.3cm}

\noindent{\bf Acknowledgments.} The author would like to thank Gianluca Crippa and Laura Spinolo for their interest in this work and for several stimulating discussions. The author indebted to the University of Basel for financial support. 

\bibliographystyle{abbrv}
\bibliography{references}
\end{document}